\DeclareMathAlphabet{\mathpzc}{OT1}{pzc}{m}{it}
\newtheorem{theorem}{Theorem}[section]
\newtheorem{corollary}[theorem]{Corollary}
\newtheorem{definition}[theorem]{Definition}
\newtheorem{lemma}[theorem]{Lemma}
\newtheorem{proposition}[theorem]{Proposition}
\newtheorem{assumption}[theorem]{Assumption}
\numberwithin{equation}{section}
\numberwithin{table}{section}
\numberwithin{figure}{section}
\newtheorem{remark}[theorem]{Remark}
\newtheorem{example}[theorem]{Example}
\newcommand{\noi}{\noindent}
\newcommand{\cA}{{\cal A}}
\newcommand{\cH}{{\cal H}}
\newcommand{\cL}{{\cal L}}
\newcommand{\cP}{{\cal P}}
\newcommand{\cQ}{{\cal Q}}
\newcommand{\cR}{{\cal R}}
\newcommand{\cT}{{\cal T}}
\newcommand{\cV}{{\cal V}}
\newcommand{\cZ}{{\cal Z}}
\newcommand{\bx}{x}
\newcommand{\bv}{v}
    \newcommand\quotient[2]{
        \mathchoice
            {
                \text{\raise1ex\hbox{$#1$}\Big/\lower1ex\hbox{$#2$}}%
            }
            {
                #1\,/\,#2
            }
            {
                #1\,/\,#2
            }
            {
                #1\,/\,#2
            }
    }
\newcommand{\ri}{{\rm i}}
\newcommand{\beq}{\begin{equation}}
\newcommand{\eeq}{\end{equation}}
\newcommand{\beqs}{\begin{equation*}}
\newcommand{\eeqs}{\end{equation*}}
\newcommand{\bit}{\begin{itemize}}
\newcommand{\eit}{\end{itemize}}
\newcommand{\ben}{\begin{enumerate}}
\newcommand{\een}{\end{enumerate}}
\newcommand{\bal}{\begin{align}}
\newcommand{\eal}{\end{align}}
\newcommand{\bals}{\begin{align*}}
\newcommand{\eals}{\end{align*}}
\newcommand{\bse}{\begin{subequations}}
\newcommand{\ese}{\end{subequations}}
\newcommand{\bpr}{\begin{proposition}}
\newcommand{\epr}{\end{proposition}}
\newcommand{\bre}{\begin{remark}}
\newcommand{\ere}{\end{remark}}
\newcommand{\bpf}{\begin{proof}}
\newcommand{\epf}{\end{proof}}
\newcommand{\ble}{\begin{lemma}}
\newcommand{\ele}{\end{lemma}}
\newcommand{\bco}{\begin{corollary}}
\newcommand{\eco}{\end{corollary}}
\newcommand{\bex}{\begin{example}}
\newcommand{\eex}{\end{example}}
\newcommand{\Rea}{\mathbb{R}}
\newcommand{\Com}{\mathbb{C}}
\newcommand{\dive}{\mathop{{\rm div}}}
\newcommand{\grad}{\mathop{{\rm grad}}}
\def\XXint#1#2#3{{\setbox0=\hbox{$#1{#2#3}{\int}$}
     \vcenter{\hbox{$#2#3$}}\kern-.5\wd0}}
\definecolor{myblue}{rgb}{0,0,0.6}
\newcommand*{\N}[1]{\left\|#1\right\|}
\newcommand{\tfa}{\text{ for all }}
\newcommand{\tfor}{\text{ for }}
\newcommand{\tin}{\text{ in }}
\newcommand{\tas}{\text{ as }}
\newcommand{\tand}{\text{ and }}
\newcommand{\vertiii}[1]{{\left\vert\kern-0.25ex\left\vert\kern-0.25ex\left\vert #1
    \right\vert\kern-0.25ex\right\vert\kern-0.25ex\right\vert}}
\definecolor{jwcol}{RGB}{27, 137, 18}  
\definecolor{dalcol}{rgb}{0.8,0,0}
\definecolor{escol}{rgb}{0,0,0.8}
\definecolor{estcol}{rgb}{0,0.5,0}
\definecolor{esnewcol}{rgb}{0,0.5,0}
\newcommand{\supp}{{\rm supp}}
\newcommand{\Cosc}{C_{\rm{osc}}}
\newcommand{\Csol}{C_{\rm sol}}
\newcommand{\Creg}{{C_{\rm reg}}}
\newcommand{\tr}{{\rm tr}}
\newcommand{\e}{\epsilon}
\newcommand{\domaingen}{\Omega}
\newcommand{\RPMLo}{R_{\rm PML, -}}
\newcommand{\RPMLt}{R_{\rm PML, +}}
\newcommand{\Rtr}{R_{\tr} }
\newcommand{\fdspace}{\Hilbert_h}
\newcommand{\Coscil}{C_{\rm osc}}
\renewcommand{\Re}{\operatorname{Re}}
\newcommand{\eq}{:=}
\newcommand{\ccurl}{\curl}
\newcommand{\BH}{H}
\newcommand{\BN}{N}
\newcommand{\BP}{P}
\newcommand{\CJ}{\mathcal J}
\newcommand{\CT}{\mathcal T}
\newcommand{\LF}{\mathscr F}
\newcommand{\curl}{{\rm curl}\,}
\newcommand{\Ker}{{\rm Ker}\,}
\newcommand{\Phash}{\operator^{\#}}
\newcommand{\Pihash}{\Pi^{\#}_h}
\newcommand{\Rs}{(\operator^*)^{-1}}
\newcommand{\Rhash}{(\Phash)^{-1}}
\newcommand{\Pone}{\mathcal{D}}
\newcommand{\Ptwo}{\mathcal{E}}
\newcommand{\Hilbert}{\cH}
\newcommand{\Hilbertzero}{\cV}
\newcommand{\Hilbertalt}{H}
\newcommand{\Hilbertzeroalt}{V}
\newcommand{\Palt}{\cQ}
\newcommand{\Hpw}[1]{H^{#1}_{\rm pw}}
\newcommand{\Hpwo}[1]{H^{#1}_{{\rm pw}, \wn }}
\newcommand{\newell}{m}
\newcommand{\newellfour}{\ell}
\newcommand{\newellthree}{\ell}
\newcommand{\newnewell}{\newell}
\newcommand{\wn}{k}
\newcommand{\operator}{P}
\newcommand{\smoother}{S}
\newcommand{\newZ}{Z}
\newcommand{\newf}{g}
\newcommand{\hK}{h_K}
\definecolor{jeffColor}{RGB}{102, 0, 204}
\title{
Sharp 
error bounds for 
edge-element discretisations of 
the high-frequency 
Maxwell equations}
\author{
T.~Chaumont-Frelet\thanks{Inria Univ.~Lille and Laboratoire Paul Painlev\'e, 59655 Villeneuve-d'Ascq, France, {\tt theophile.chaumont@inria.fr}}, \quad
J.~Galkowski\thanks{Department of Mathematics, University College London, 25 Gordon Street, London, WC1H 0AY, UK,   \tt J.Galkowski@ucl.ac.uk},\quad
E.~A.~Spence\thanks{Department of Mathematical Sciences, University of Bath, Bath, BA2 7AY, UK, \tt E.A.Spence@bath.ac.uk }
}
\date{\today}
\begin{document}
\pagenumbering{arabic}

\maketitle

\begin{abstract}
We prove sharp wavenumber-explicit error bounds for first- or second-family-N\'ed\'elec-element (a.k.a.~edge-element) conforming discretisations,
of arbitrary (fixed) order, of 
the variable-coefficient time-harmonic Maxwell equations posed in a bounded domain with perfect electric conductor (PEC) boundary conditions. The PDE coefficients are allowed to be piecewise regular and complex-valued; this set-up therefore includes scattering from a PEC obstacle and/or variable real-valued coefficients, with the radiation condition approximated by a perfectly matched layer (PML). 

In the analysis of the $h$-version of the finite-element method, 
with fixed polynomial degree $p$, applied to the time-harmonic Maxwell equations, the \emph{asymptotic regime} is when the meshwidth, $h$, is small enough (in a wavenumber-dependent way) that the Galerkin solution is quasioptimal independently of the wavenumber, while the \emph{preasymptotic regime} is the complement of the asymptotic regime.

The results of this paper are the first preasymptotic error bounds for the time-harmonic Maxwell equations using first-family N\'ed\'elec elements or higher-than-lowest-order second-family N\'ed\'elec elements. Furthermore, they are the first wavenumber-explicit results, even in the asymptotic regime, for Maxwell scattering problems with a non-empty scatterer.
\end{abstract}

\section{Introduction}

\subsection{Statement of the main result}\label{sec:statement}

We consider the time-harmonic Maxwell equations
\beq\label{eq:Maxwell}
\wn ^{-2}\curl (\mu^{-1} \curl E) - \epsilon E = f,
\eeq
with wavenumber $\wn$, posed in a bounded Lipschitz domain $\Omega\subset \Rea^3$ with outward-pointing unit normal vector $n$ and diameter $L$, where $E \in H_0(\curl, \Omega)$ (i.e., $E\in H(\curl,\Omega)$ with $E\times n=0$ on $\partial \Omega$), the data $f \in (H_0(\curl, \Omega))^*$, and the coefficients $\mu$ and $\epsilon$
(the relative permeability and relative permittivity, respectively)  satisfy $\Re \mu, \Re \epsilon \geq c>0$ (in the sense of quadratic forms) in $\Omega$.
We are interested in this problem when $\wn L\gg 1$, i.e., the high-frequency regime. 

This setting includes the radial-perfectly-matched-layer approximation to the scattering problem where the scattering is caused by variable $\mu$ and $\epsilon$ and/or a perfect-electric-conductor obstacle; see \S\ref{sec:PML}.

We study approximations to the solution of~\eqref{eq:Maxwell} using the $h$-version of the finite-element method ($h$-FEM), where accuracy is increased by decreasing the meshwidth $h$ while keeping the polynomial degree $p$ constant, and the (conforming) approximation space consists of the first family (also called the ``first type" or ``first kind") of N\'ed\'elec finite elements \cite{Ne:80}, whose definition is recapped in 
\S\ref{sec:Nedelec} below
\footnote{
Recall that N\'ed\'elec elements are often called edge elements because at the lowest order basis functions and degrees of freedom are associated with the edges of the mesh; at higher order the geometrical identification of basis functions and degrees of freedom is more complicated.
}; note that we choose the convention that $p=1$ corresponds to lowest-order N\'ed\'elec elements.
Since the second family of N\'ed\'elec finite elements 
 \cite{Ne:86}, \cite[\S8.2]{Mo:03}, \cite[\S15.5.1]{ErGu:21} contains the first family, and our results depend only on best-approximation properties of the space, 
 our results also hold for second-family N\'ed\'elec finite elements.

We work in norms where each derivative is scaled by $\wn ^{-1}$; in particular,
\beq\label{eq:1knorm}
\N{E}^2_{H_\wn (\curl,\Omega)}:= \wn ^{-2}\N{\curl E}^2_{L^2(\domaingen)} + \N{E}^2_{L^2(\domaingen)}.
\eeq

If \eqref{eq:Maxwell} has a solution for every $f\in L^2(\Omega)$, we define
\beq\label{eq:Csol}
\Csol= \Csol(k):=
 \sup_{0\neq f \in L^2(\Omega)} \bigg\{\frac{
\| E\|_{L^2(\Omega)}
}{
\| f\|_{L^2(\Omega)}
}
\,:\, E \text{ satisfies  \eqref{eq:Maxwell}}
\bigg\};
\eeq
otherwise $\Csol := \infty$.
This definition implies that $\Csol=\infty$ if \eqref{eq:Maxwell} does not have a unique solution for every $f\in L^2(\Omega)$.
Recall that, with the norm convention \eqref{eq:1knorm}, the $L^2(\Omega)\to H_\wn(\curl,\Omega)$ and $(H_\wn(\curl,\Omega))^*\to H_\wn(\curl,\Omega)$ norms are then both bounded by a $\wn$-independent multiple of $1+\Csol$.

\begin{definition}[$C^\newellthree$ with respect to a partition]\label{def:Crpartition}
For $\newellthree\in \mathbb{N}$, $\Omega$ is $C^\newellthree$ with respect to the partition $\{\Omega_j\}_{j=1}^n$ if 

(i) $\overline{\Omega}= \cup_{j=1}^n \overline{\Omega_j}$, where $\Omega_i\cap \Omega_j=\emptyset$ if $i\neq j$,

(ii) $\Gamma_{i,j}$ is $C^{\newellthree}$ for all $(i,j)$,  where $\partial\Omega_j=\sqcup_{i=1}^{L_j}\Gamma_{i,j}$ is the decomposition of $\partial\Omega_j$ into  its connected components, and 

(iii) for all $i,i',j,j'$, if $\Gamma_{i,j}\cap \Gamma_{i',j'}\neq \emptyset$, then $\Gamma_{i,j}=\Gamma_{i',j'}$.

\end{definition}

\begin{figure}\label{fig:1} 
\begin{center}
\includegraphics{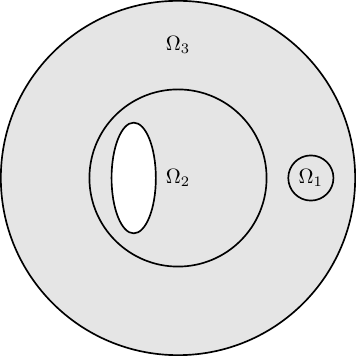}
\caption{An example of $\Omega$ (shaded) with $\overline{\Omega}= \cup_{j=1}^n \overline{\Omega_j}$ satisfying Definition \ref{def:Crpartition}.} 
\end{center}
\end{figure}

This definition implies that if $\Omega$ is $C^\newellthree$ with respect to a partition, then $\partial \Omega$ is  $C^\newellthree$ (since $\partial \Omega=\Gamma_{i,j}$ for some $i,j$).
Figure \ref{fig:1} shows an example of $\overline{\Omega}= \cup_{j=1}^n \overline{\Omega_j}$ satisfying Definition \ref{def:Crpartition}.

\begin{assumption}[Regularity assumptions on $\Omega, \epsilon,$ and $\mu$]
\label{ass:regularity}
For some $\newnewell\in \mathbb{N}$, $\Omega$ is $C^{\newnewell+1}$ with respect to the partition $\{\Omega_j\}_{j=1}^n$ 
and 
$\epsilon\in C^{\newnewell,1}(\overline{\Omega_j})$ 
and $\mu\in C^{\newnewell}(\overline{\Omega_j})$
 for all $j=1,\ldots, n$.
\end{assumption}

\begin{theorem}[The main result]\label{thm:intro}
Suppose that 
Assumption \ref{ass:regularity} holds for an integer $\newnewell\geq1$. 
Given $1\leq p\leq \newnewell$ and $\wn_0, \Coscil>0$ there exist $C_1, C_2, C_3>0$ such that the following holds. 

Let $\Hilbert_h\subset H_0(\curl,\Omega)$ be the space of first- or second-family N\'ed\'elec finite-elements of degree $p$ on a (curved) 
mesh satisfying Assumption \ref{assumption_curved_fem} below, with maximal element size $h$.

For all $k\geq k_0$ and $h>0$ satisfying
\beq\label{eq:threshold}
(\wn h )^{2p}\Csol \leq C_1
\eeq
the Galerkin solution $E_h$ exists, is unique, and satisfies
\beq\label{eq:H1bound}
\N{E-E_h}_{H_\wn (\curl,\Omega)}\leq C_2 \Big(1 + (\wn h )^p \Csol\Big) \min_{v_h \in \fdspace} \N{E-v_h}_{H_\wn (\curl,\Omega)}.
\eeq

Furthermore, 
 if the data $f$ is $\wn$-oscillatory with constant $\Cosc$ and regularity index $\newnewell$ (in the sense of Definition \ref{def:oscil} below), 
then
\beq\label{eq:rel_error}
\frac{\N{E-E_h}_{H_\wn (\curl,\Omega)}}
{
\N{E}_{H_\wn (\curl,\Omega)}
}\leq C_3 \Big(1  + (\wn h )^p \Csol\Big)(\wn h )^p;
\eeq
i.e., the relative $H_\wn (\curl,\Omega)$ error can be made controllably small by making $(\wn h)^{2p}\Csol$ sufficiently small.
\end{theorem}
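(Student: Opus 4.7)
The plan is a Schatz-type duality (Aubin--Nitsche) argument adapted to N\'ed\'elec discretisations of Maxwell's equations. Writing $e := E - E_h$, Galerkin orthogonality gives $a(e, v_h) = 0$ for all $v_h \in \Hilbert_h$, where $a(\cdot,\cdot)$ is the sesquilinear form associated with \eqref{eq:Maxwell}. Combined with continuity of $a$ on $H_\wn(\curl,\Omega)\times H_\wn(\curl,\Omega)$ and a G\aa rding-type inequality
\[
\Re\bigl(e^{\mathrm{i}\vartheta}a(u,u)\bigr) \geq c\,\|u\|^2_{H_\wn(\curl,\Omega)} - C\,\wn^2\|u\|^2_{L^2(\Omega)}
\]
(for a suitable rotation $\vartheta$), this yields the basic Schatz estimate
\[
c\|e\|^2_{H_\wn(\curl,\Omega)} \leq C\|e\|_{H_\wn(\curl,\Omega)}\min_{v_h\in \Hilbert_h} \|E - v_h\|_{H_\wn(\curl,\Omega)} + C\wn^2\|e\|^2_{L^2(\Omega)}.
\]

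\textbf{The central technical step, and main obstacle,} is the adjoint-approximability estimate: letting $S^*$ denote the solution operator for the formal adjoint of \eqref{eq:Maxwell}, one must show that for every $g \in L^2(\Omega)^3$,
\[
\inf_{v_h \in \Hilbert_h}\|S^* g - v_h\|_{H_\wn(\curl,\Omega)} \leq C(\wn h)^p\bigl(1 + (\wn h)^p\Csol\bigr)\,\wn^{-1}\|g\|_{L^2(\Omega)}.
\]
For Maxwell, the natural regularity shift is obstructed on the curl-free part of $w := S^* g$; to handle this one Helmholtz-decomposes $w = \nabla \phi + z$ and exploits the piecewise $C^{\newnewell+1}$ partition of Assumption \ref{ass:regularity} to apply piecewise elliptic regularity of order up to $p+1$ on each $\Omega_j$, to which curved-mesh N\'ed\'elec interpolation (Assumption \ref{assumption_curved_fem}) applies. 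The factor $1 + (\wn h)^p\Csol$ arises from splitting $w$ into low- and high-frequency pieces (in the spirit of Melenk--Sauter), tracking the $\wn$-dependence through both the resolvent (which contributes $\Csol$ at $L^2$-level) and the iterated regularity shift across interfaces where $\epsilon,\mu$ may jump.

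Given the adjoint approximability, the duality identity $\|e\|^2_{L^2(\Omega)} = a(e, S^* e) = a(e, S^* e - v_h)$ (valid by Galerkin orthogonality, for any $v_h \in \Hilbert_h$), combined with continuity of $a$, gives
\[
\wn\|e\|_{L^2(\Omega)} \leq C(\wn h)^p\bigl(1 + (\wn h)^p\Csol\bigr)\|e\|_{H_\wn(\curl,\Omega)}.
\]
Inserting this into the Schatz estimate leads to
\[
\Bigl(c - C(\wn h)^{2p}\bigl(1 + (\wn h)^p\Csol\bigr)^2\Bigr)\|e\|_{H_\wn(\curl,\Omega)} \leq C \min_{v_h\in \Hilbert_h}\|E - v_h\|_{H_\wn(\curl,\Omega)}.
\]
The threshold \eqref{eq:threshold}, with $C_1$ sufficiently small, is designed precisely so that the bracketed coefficient on the left is bounded below by $c/2$ (upon expanding the square and using $\Csol\geq C\wn L \geq C$ to bound each of $(\wn h)^{2p}$, $(\wn h)^{3p}\Csol$, $(\wn h)^{4p}\Csol^2$ in terms of $C_1$); this yields \eqref{eq:H1bound}. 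Existence and uniqueness of $E_h$ then follow by applying \eqref{eq:H1bound} to the case $E = 0$, which forces $e = 0$ and hence injectivity of the Galerkin map.

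Finally, \eqref{eq:rel_error} follows from combining \eqref{eq:H1bound} with the best-approximation bound
\[
\min_{v_h \in \Hilbert_h}\|E - v_h\|_{H_\wn(\curl,\Omega)} \leq C(\wn h)^p\|E\|_{H_\wn(\curl,\Omega)},
\]
which holds because $\wn$-oscillatory data with regularity index $\newnewell \geq p$ (Definition \ref{def:oscil}) propagates to a solution $E$ with piecewise $H^{p+1}$-type regularity whose higher-order seminorms scale like $\wn^p\|E\|_{H_\wn(\curl,\Omega)}$, to which standard N\'ed\'elec interpolation estimates on the mesh of Assumption \ref{assumption_curved_fem} apply.
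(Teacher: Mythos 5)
Your proposal identifies the right obstruction but does not actually solve it, and as a result the central adjoint-approximability estimate you claim cannot hold as stated. The issue is that the curl-free part of $S^*g$ has \emph{no} regularity gain: on $\Ker(\curl)$ the Maxwell operator reduces to multiplication by $-\epsilon$, so if $g\in L^2(\Omega)^3$ has a nontrivial curl-free (gradient) component then $S^*g$ has a curl-free component that is merely $L^2$ and is not approximated to any positive power of $h$ by N\'ed\'elec elements in the $H_\wn(\curl,\Omega)$ norm. A Helmholtz decomposition $w=\nabla\phi+z$ followed by ``piecewise elliptic regularity'' does not repair this, because $\nabla\phi$ inherits only the regularity of $g$. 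Consequently the duality step $\|e\|^2_{L^2}=a(e,S^*e-v_h)$ cannot produce a factor $(\wn h)^p$ for the full $L^2$ norm of the error, and your contraction bracket never closes.

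The paper's proof avoids exactly this trap. It introduces the oblique projections $\Pi_0$ (onto $\Ker\curl$, uniquely determined by the property $\Pi_0^*\epsilon\Pi_1=0$) and $\Pi_1=I-\Pi_0$, and then: (i) it bounds $\|\Pi_0(E-E_h)\|$ \emph{not} by duality but by a divergence-conformity argument (Lemmas \ref{lem:Pi0error}--\ref{lem:gamma_dv}, via the quantity $\gamma_{\rm dv}(\operator)$ and the commuting N\'ed\'elec interpolant), exploiting that the Galerkin error is discretely $\epsilon$-divergence-free by \eqref{eq:Gog2}; and (ii) it applies a duality argument only to $\Pi_1(E-E_h)$, and in fact only to $S\Pi_1(E-E_h)$ where $S=\psi(\eta^{-1}\CP)$ is a smoothing operator built by functional calculus so that $\operator+\Pi_1^*\eta S^2\Pi_1$ is coercive on $\Pi_1\Hilbert$. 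This is the Maxwell analogue of the ``elliptic projection by adding a smoother'' idea from \cite{GS3}, and it is what produces the $(1+(\wn h)^p\Csol)$ factor through the splitting $\Rs=\Rhash+\Rhash(\Pi_1^*\eta S^2\Pi_1)\Rs$. Your proposal conflates this with a Melenk--Sauter frequency splitting, but the mechanism here is the smoothing operator rather than a low/high frequency filter of the solution. Finally, note that the theorem's smallness hypotheses in \eqref{eq:sufficiently_small} involve $\gamma_{\rm dv}$ (and hence $\wn h$ small) as a separate condition alongside the $(\wn h)^{2p}\Csol$ condition; the first condition is what makes the kernel-part bound work, and it is absent from your argument.
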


Note that 
if \eqref{eq:Maxwell} does not have a unique solution for every $f\in L^2(\Omega)$ then $\Csol=\infty$ and \eqref{eq:threshold} is never satisfied.

\bre[The origin of the assumptions of Theorem \ref{thm:intro}]
The abstract version of Theorem \ref{thm:intro} -- Theorem \ref{thm:abs1} below -- is proved assuming only a G\aa rding inequality and elliptic-regularity-type assumptions (see Assumptions \ref{ass:1} and \ref{ass:2} below). Theorem \ref{thm:intro} is then proved by showing that these regularity assumptions are satisfied using the classic regularity results of Weber \cite{We:81}  (see Theorem \ref{thm:Weber} and Lemma \ref{lem:Maxwell_reg} below).
\ere

\bre[The main result applied to differential $r$-forms]
Theorem~\ref{thm:abs1} can also be applied to differential $r$-forms in any dimension. In this case, the operator $\e^{-1}\curl (\mu^{-1}\curl )$ is replaced by $\Pone=*d*d$, where $*$ denotes the Hodge $*$ operator (with respect to the relevant metric); the kernel of $\Pone$ then consists of closed $r$-forms. (Recall that finite-element spaces in this setting are discussed in \cite{Ch:07}.)
\ere

\bre[The norm of the solution operator $\Csol$]\label{rem:Csol}
Theorem \ref{thm:intro} involves $\Csol$ defined in \eqref{eq:Csol}; i.e., the $L^2(\Omega)\to L^2(\Omega)$ norm of the solution operator $f\mapsto E$. We note the following:~(i) With the definition  \eqref{eq:1knorm} of the norm $\|\cdot\|_{H_\wn(\curl,\Omega)}$, the $\wn$-dependence of $\Csol$ is the same as the $L^2(\Omega)\to H_\wn(\curl, \Omega)$ norm of the solution operator.
(ii) When $\mu$ and $\epsilon$ are both constant multiples of the identity in part of the domain, $\Csol \geq C\wn L$ -- this can be proved 
by considering data that is a cut-off function multiplied by a plane wave; see, e.g., \cite[\S1.4.1]{ChMoSp:23}, \cite[Example 3.4]{MeSa:23}.
(iii) When $\epsilon$ and $\mu$ are both real, the problem is self adjoint, and $\Csol$ is given in terms of the distance from $\wn^2$ to the spectrum
\cite[\S2.3, \S3]{ChVe:22}.
(iv) \cite[Theorem 1.6]{GLS2} proved that the norm of the solution operator of the Helmholtz PML problem is bounded by the norm of the solution operator of the corresponding Helmholtz scattering problem; we expect that the same result holds for the Maxwell PML problem. 
\footnote{For the case of no scatterer and Cartesian PML, \cite[Lemma 10]{CuWaXi:25} proved that $\Csol \leq C (\wn L)^2$; i.e., a $\wn L$ loss from the expected estimate.}
If so, then $\Csol \leq C\wn L$ when the problem is nontrapping; see 
\cite[Theorem 1.1]{ChMoSp:23} (for certain nontrapping coefficients) and \cite[\S2]{Ya:88} (for a nontrapping PEC obstacle).
\ere

\subsection{The context and novelty of the main result}\label{sec:context}

\paragraph{The asymptotic and preasymptotic regimes.}

We first discuss the analysis of the $h$-FEM applied to the Helmholtz equation $(\wn ^{-2}\Delta +1)u=f$.
The concepts of the asymptotic and preasymptotic regimes were first introduced by Ihlenburg and Babu\v{s}ka in \cite{IhBa:95a, IhBa:97}. In the \emph{asymptotic regime}, which is now known to be when $h=h(k)$ satisfies $(\wn h )^p\Csol \ll 1$, the sequence of Galerkin solutions are quasioptimal, with quasioptimality constant independent of $\wn $. 
The \emph{preasymptotic regime} is then when $(\wn h)^p \Csol \gg1$. In this regime, 
one expects that 
if $(\wn h )^{2p}\Csol$ is sufficiently small then, for data oscillating at frequency $\lesssim \wn $, the relative error of the Galerkin solution is controllably small.
Note that, since $\Csol$ grows with $\wn L$, $h \wn =o(1)$ in the asymptotic regime, and this is the well-known 
\emph{pollution effect} \cite{BaSa:00}.

\paragraph{State of the art in the asymptotic regime for the Helmholtz $h$-FEM.}

The natural error bounds in the asymptotic regime were proved for Helmholtz problems 
 satisfying only a G\aa rding inequality and an elliptic-regularity shift in \cite{ChNi:20} following 
earlier work by \cite{MeSa:10, MeSa:11, EsMe:12} for constant-coefficient Helmholtz problems. In fact, this earlier work showed that 
the $hp$-FEM does not suffer from the pollution effect when $hk/p\leq C_1$ for $C_1$ sufficiently small, 
$p\geq C_2 \log (\wn L)$ for $C_2$ sufficiently large,  and $\Csol\leq C_3 (\wn L)^N$ for some $C_3, N>0$; this result is now known for variable-coefficient Helmholtz problems by  
 \cite{LSW3, LSW4, GLSW1, BeChMe:24}. 

The error bounds in the asymptotic regime rely on the fact that, since the Helmholtz adjoint solution operator is compact as a map from $L^2$ to $H^1$, 
the $L^2$ norm of the error is asymptotically smaller than the $H^1$ norm by the Aubin--Nitsche lemma (see, e.g., \cite[Theorem 19.1]{Ci:91}). Indeed, with $P$ denoting the Helmholtz operator, Galerkin orthogonality $\langle \operator(u-u_h), v_h\rangle =0$ for all finite-element functions $v_h$ implies that, with $\Pi_h$ the orthogonal projection onto the finite-element space, 
\begin{align}\nonumber
\N{u-u_h}^2_{L^2} 
&= \big\langle \operator^{-1}\operator(u-u_h) , u-u_h\big\rangle,\\ \nonumber
&= \big\langle \operator(u-u_h) , (\operator^*)^{-1} (u-u_h)\big\rangle,\\ \nonumber
&= \big\langle \operator(u-u_h) , (I-\Pi_h)(\operator^*)^{-1}  (u-u_h)\big\rangle, \\
&\leq  C \N{u-u_h}_{H^1}  \big\|(I-\Pi_h)(\operator^*)^{-1}\big\|_{L^2\to H^1}\N{  u-u_h}_{L^2}.\label{eq:AN}
\end{align}
Schatz \cite{Sc:74} used this duality argument in conjunction with a G\aa rding inequality to bound the Helmholtz FEM error; see also \cite{Sa:06} for a more modern perspective. 
(The Maxwell analogue of this result is Lemma \ref{lem:basic} below.)
The results \cite{MeSa:10, MeSa:11, EsMe:12, ChNi:20, LSW3, LSW4, GLSW1, BeChMe:24} discussed above then obtained quasi-optimality (with constant independent of $\wn$) when  $(\wn h )^p\Csol$ is sufficiently small by bounding $\|(I-\Pi_h)(\operator^*)^{-1}\|_{L^2\to H^1}$.

\paragraph{State of the art in the preasymptotic regime for the Helmholtz $h$-FEM.}
The natural bounds in the preasymptotic regime (i.e., the Helmholtz analogues of \eqref{eq:H1bound} and \eqref{eq:rel_error} above) were proved in \cite{GS3} for Helmholtz problems satisfying only a G\aa rding inequality and an elliptic-regularity shift, following earlier work by \cite{Wu:14, ZhWu:13, DuWu:15, BaChGo:17, Pe:20, ChGaNiTo:22}. 
Central to this earlier work was 
the \emph{elliptic projection argument} \cite{FeWu:09, FeWu:11}, which used that the Helmholtz operator is coercive if a sufficiently large multiple of the identity is added. The
 key insight in 
\cite{GS3} is that, in fact, this coercivity can be achieved by adding a smoothing operator $S$, defined in terms of eigenfunctions of the real part of the Helmholtz operator
(a Maxwell analogue of this is Lemma \ref{lem:S1} below).

We highlight that the arguments of \cite{GS3} immediately obtain a splitting analogous to that used to bound $\|(I-\Pi_h)(\operator^*)^{-1}\|_{L^2\to H^1}$ in
\cite{MeSa:10, MeSa:11, EsMe:12, ChNi:20, LSW3, LSW4, GLSW1, BeChMe:24}. Indeed, 
since
\beqs
(\operator^*+\smoother)(\operator^*)^{-1} = I+ \smoother(\operator^*)^{-1}, 
\eeqs
then
\beq\label{eq:splitting}
(\operator^*)^{-1} = (\operator^*+\smoother)^{-1}+ (\operator^*+\smoother)^{-1}\smoother(\operator^*)^{-1}.
\eeq
If $\smoother$ is a smoothing operator such that $\operator+\smoother$ is coercive (with coercivity constant independent of $\wn$) and $\operator$ satisfies the natural assumptions for elliptic regularity, then $(\operator^*+\smoother)^{-1}$ has the regularity shift associated with $(\operator^*)^{-1}$, but its norm is bounded independent of  $\wn$. Furthermore, $(\operator^*+S)^{-1}\smoother(\operator^*)^{-1}$ is smoothing, with norm bounded by the norm of $(\operator^*)^{-1}$.

\paragraph{Duality-argument analysis of the Maxwell $h$-FEM using N\'ed\'elec finite elements.}

Compared to the analysis of the Helmholtz $h$-FEM, the analysis of the Maxwell $h$-FEM is complicated by the large kernel of 
the curl operator. The kernel of $\curl$ does not consist of smooth functions; thus 
neither the solution operator nor its adjoint are compact.  
The duality arguments described above for Helmholtz therefore cannot immediately be applied.

If $\dive (\zeta E)=0$ for some $\zeta$ with $\Re \zeta\geq c>0$ (in the sense of quadratic forms), then $E$ lies in a subspace transverse to the kernel of $\curl$ and  the solution operator increases regularity by the regularity results of Weber \cite{We:80}; see Theorem \ref{thm:Weber} and Lemma \ref{lem:Maxwell_reg} below. 
This is related to the fact that, whereas the embedding $H_0(\curl,\Omega) \hookrightarrow L^2(\Omega)$ is not compact, the embedding $H_0(\curl,\Omega)\cap H(\dive,\zeta,\Omega)\hookrightarrow L^2(\Omega)$ is compact \cite{We:74, We:80, Pi:84} \cite[\S8.4]{Le:86}, where $H(\dive,\zeta,\Omega):=\{ v \in L^2(\Omega) : \nabla \cdot (\zeta v) \in L^2(\Omega)\}$. 

One strategy for proving bounds on the Galerkin error for Maxwell -- first introduced by Monk \cite{Mo:92} -- is to 
\bit
\item[(i)] bound the $\epsilon$-divergence free part of the error using the duality arguments from \cite{Sc:74} (discussed above), and 
\item[(ii)] bound the part of the error that is not $\epsilon$-divergence free using arguments originating from \cite{Gi:88} (discussed below).
\eit
This argument is essentially equivalent to Lemma \ref{lem:basic} below. Notable uses of this type of argument include in \cite{GoPa:03}, in the analysis of Maxwell domain decomposition methods, and in \cite{BrPa:08}, in the analysis of the $h$-FEM with N\'ed\'elec elements applied to the Maxwell PML problem.

Regarding Point (ii) above:~by Galerkin orthogonality, the error is \emph{discretely $\epsilon$-divergence free}, in the sense that $(\epsilon(E-E_h), v_h)_{L^2(\Omega)}=0$ for all $v_h\in \Ker \curl\cap\Hilbert_h$ (see \eqref{eq:Gog2} below). Therefore, the part of the error that is not $\epsilon$-divergence free can be controlled by understanding how much a function that is discretely $\epsilon$-divergence free is not pointwise $\epsilon$-divergence free.
These arguments crucially rely on the existence of 
an interpolation operator that leaves the finite-element space invariant and maps functions in $\Ker \curl$ to functions in $\Ker \curl$ (see \S\ref{sec:interpolation} and Lemma \ref{lem:gammadv1} below). 
The initial versions of this argument in \cite{Gi:88,Mo:92} used standard interpolation operators, at the cost of demanding extra regularity of the Maxwell solution (see \cite[Remark 3.1]{Gi:88}). 
Later refinements of this argument \cite{GoPa:03, Mo:03a} then used 
quasi-interpolation operators with lower -- and, ultimately, minimal -- regularity assumptions; see \cite[\S4.1]{AmBeDaGi:98},
\cite{ArFaWi:00, Sc:01, Ch:07, ChWi:08}, \cite[Chapter 23]{ErGu:21}. 

\paragraph{Current state of the art for wavenumber-explicit bounds on the Maxwell $h$-FEM using N\'ed\'elec finite elements.} 

\bit
\item For real $\mu$ and $\epsilon$, the natural asymptotic error bounds are proved by the combination of \cite[Theorem 4.6, Lemma 5.2]{ChEr:23} and \cite[Theorem 2]{ChVe:22}.
\item The papers \cite{MeSa:21, MeSa:23} show that the $hp$-FEM applied to \eqref{eq:Maxwell} with constant $\mu$ and $\epsilon$ and analytic boundary 
does not suffer from the pollution effect if $p\geq C_1\log (\wn L)$ for any $C_1>0$,
$hk/p\leq C_2$ for sufficiently small $C_2>0$, 
 and $\Csol\leq C_3 (\wn L)^N$ for some $C_3, N>0$.
Although these analyses are geared towards $p$ growing with $k$,
the results in \cite{MeSa:23} for impedance boundary conditions  contain the result that, when $p$ is constant, the Galerkin solution is quasioptimal (with constant independent of $\wn$) when $(\wn h)^{p-1}\Csol$ is sufficiently small (i.e., one power of $\wn h$ away from the optimal result); see \cite[Proof of Lemma 9.5]{MeSa:23} (and note that $p=0$ corresponds to the lowest-order elements in \cite{MeSa:23}, instead of $p=1$ here).
The fixed-$p$ results in \cite{MeSa:21} for when the radiation condition is realised exactly on $\partial\Omega$ are more restrictive; see \cite[Remark 4.19]{MeSa:21}. These arguments essentially use a result equivalent to Lemma \ref{lem:basic} below, and then 
prove approximation results about the adjoint solution operator to bound the second quantity in \eqref{eq:asymptotic1} (following the ideas introduced in the Helmholtz context in \cite{MeSa:10, MeSa:11, EsMe:12, BeChMe:24}).

Analogous results for a regularised formulation of \eqref{eq:Maxwell} -- where the space is embedded in $H^1$ if the boundary is smooth enough -- were obtained in \cite{NiTo:20} (with the $h$-version of this method studied in a $\wn$-explicit way in \cite{NiTo:19}).

\item Very recently, the natural preasymptotic error bounds were proved in \cite[Theorem 4.2]{LuWu:24} when $p=1$ for \eqref{eq:Maxwell} with constant $\mu$ and $\epsilon$ and an impedance boundary condition on $\partial \Omega$, and when the $h$-FEM is implemented using N\'ed\'elec elements of the second family.
Recall that the second-family elements have better approximation properties in the $L^2$ norm than the first family (see, e.g., \cite[\S8.2]{Mo:03}), with this fact crucially used in \cite[Equation A.2]{LuWu:24}.
The analogous error bounds for continuous interior-penalty methods were proved in \cite[Theorem 5.2]{LuWu:24}. 
These results do not use the duality arguments described above; instead the crucial ingredient is a bound on the norm of the Galerkin solution in terms of the data; see \cite[Theorem 4.1]{LuWu:24} and the discussion in \cite[Remark 4.2]{LuWu:24}.

The results of \cite{LuWu:24} built on earlier work studying the same set up and proving the analogous result for other $h$-version FEMs, including
interior-penalty discontinuous Galerkin methods 
\cite[Theorem 6.1]{FeWu:14}, a different continuous interior penalty method using second-family N\'ed\'elec elements 
\cite[Theorem 4.6]{LuWuXu:19}, 
and hybridizable discontinuous Galerkin methods 
\cite[Theorem 4.7]{FeLuXu;16}, \cite[Remark 5.1]{LuChQi:17}.

Finally, we note that, since the preprint of the present paper appeared, \cite{LuWu:25} extended the results of \cite{LuWu:24} to $p>1$ using ideas from the present paper/\cite{GS3}. Indeed, \cite[Theorem 4.3]{LuWu:25} proved the natural preasymptotic error bounds for $p\in \mathbb{Z}^+$ for 
 \eqref{eq:Maxwell} with constant $\mu$ and $\epsilon$ and an impedance boundary condition on $\partial \Omega$, and when the $h$-FEM is implemented using N\'ed\'elec elements of the second family.
\eit 

\paragraph{Summary of the ideas behind the proof of Theorem \ref{thm:intro}.}
Theorem \ref{thm:intro} is proved by 
\bit
\item[(i)] bounding the $\epsilon$-divergence free part of the error using the ideas from the Helmholtz preasymptotic error analysis in \cite{GS3}, and
\item[(ii)] bounding the part of the error that is not $\epsilon$-divergence free using the arguments originating from \cite{Gi:88}. 
\eit
That is, compared to the classic duality argument introduced in \cite{Mo:92, Mo:03a} (and discussed above) we replace the Schatz argument by the arguments in \cite{GS3} and do everything in a $\wn$-explicit way.

Regarding Point (i):~we highlight that even applying the basic elliptic-projection argument (which \cite{GS3} generalises) to N\'ed\'elec-element discretisations of the time-harmonic Maxwell equations has proven difficult up to now, as described in \cite[Remark 4.2(d)]{LuWu:24}.
We use a projection 
$\Pi_0$ that maps into $\Ker \curl$, with then $\Pi_1:=I-\Pi_0$. A priori, there are many different choices for $\Pi_0$. However, the requirement that $\epsilon\Pi_1$ is $L^2$ orthogonal to $\Ker \curl$ (i.e., is $\epsilon$ divergence free) uniquely specifies $\Pi_0$; see Lemma \ref{lem:Pi0} (d). This lemma also shows that $\Pi_0$ is uniquely determined by its other key properties (see Lemma \ref{lem:Pi0} (b) and (c)).

Regarding Point (ii):~these arguments are performed in a $\wn$-explicit way for \eqref{eq:Maxwell} with $\mu$ and $\epsilon$ real-valued in \cite[\S3.3]{ChEr:23}; one slight difference between 
the arguments in \cite{ChEr:23} and those in the present paper is that \cite{ChEr:23} works in the $L^2$ inner product weighted with $\epsilon$, but this is not possible here since $\epsilon$ is complex.

Finally, we highlight that the duality arguments in the present paper have the splitting \eqref{eq:splitting} built in, so that only the 
adjoint solution operator applied to functions with high regularity appears; see \eqref{eq:fishchips1} and \eqref{eq:fishchips2} (and recall that the operator $\smoother$ is smoothing). 

\subsection{Outline}

\S\ref{sec:abstract} states the main result (i.e., Theorem \ref{thm:intro}) in abstract form (see Theorem \ref{thm:abs1} below). 
The proof of Theorem \ref{thm:abs1} is given in \S\ref{sec:abs_proof}; this proof uses intermediate results proved in \S\ref{sec:projections}-\S\ref{sec:Phash}.
The proof of Theorem \ref{thm:intro} is given in \S\ref{sec:Maxwell_proof}, using Theorem \ref{thm:abs1} and the material in \S\ref{sec:Weber} (a recap of the regularity results of Weber \cite{We:81}) and \S\ref{sec:FEM} (a recap of the definition and properties of N\'ed\'elec finite elements).
\S\ref{sec:PML} shows that the Maxwell PML problem  falls into the class of Maxwell problems described in \S\ref{sec:statement}.
\S\ref{app:interpolation} recaps scaling arguments used to prove interpolation results for N\'ed\'elec elements on curved meshes.

\section{The main result in abstract form}\label{sec:abstract}

We saw in \S\ref{sec:context} that sharp preasymptotic bounds for general $p$ have existed for certain Helmholtz problems for 10 years \cite{DuWu:15}, and the general strategy for obtaining the analogous bounds for Maxwell is clear:~in the classic duality argument introduced in \cite{Mo:92, Mo:03a}, replace 
the Schatz argument by these Helmholtz duality arguments. 
However, implementing this strategy has proved difficult, 
as noted recently in \cite[Remark 4.2(d)]{LuWu:24}.
The way we are able to achieve this, 
and obtain Theorem \ref{thm:intro}, is to work in an abstract framework that highlights the underlying mathematical structure of the problem. 
(We note that the generalisation of the preasymptotic bounds from the specific Helmholtz problems in \cite{DuWu:15, LiWu:19} to general Helmholtz problems and arbitrary polynomial degree was also achieved by working in an abstract framework \cite{GS3}.)

This section outlines this abstract framework, and states Theorem \ref{thm:intro} in abstract form as Theorem \ref{thm:abs1}. Both in this section, and in the rest of the paper, the links between the abstract framework and existing Maxwell $h$-FEM analyses are indicated in remarks and/or comments in the text.

\subsection{Abstract framework and assumptions}

Given a Hilbert space $\Hilbertzero$, let $\Hilbertzero^*$ denote the anti-dual space, and let $\langle\cdot,\cdot\rangle_{\Hilbertzero^*\times \Hilbertzero}$ be the duality pairing that is linear with respect to the first argument and anti-linear with respect to the second argument.

\begin{assumption}\label{ass:1}
$\Hilbert $ and $\Hilbertzero $ are Hilbert spaces with $\Hilbert  \subset \Hilbertzero$, 
$\Hilbert$ dense in $\Hilbertzero$, and norms $\vertiii{\cdot}_\Hilbert $ and $\vertiii{\cdot}_{\Hilbertzero }$. 
Given $C_1, C_2, C_{{\Ptwo }}'>0$, $P:\Hilbert\to \Hilbert^*$ and $\Ptwo : \Hilbertzero  \to \Hilbertzero^* $ with 
\beqs
\|\operator\|_{\Hilbert\to \Hilbert^*} 
+
\|\Ptwo\|_{\Hilbertzero\to \Hilbertzero^*} 
\leq C_2
\eeqs
and 
\beqs
  \Re\big\langle \Ptwo  v, v\big\rangle_{\Hilbertzero^*\times\Hilbertzero } \geq C_{{\Ptwo }}' \vertiii{v}^2_{\Hilbertzero } \quad\tfa v\in \Hilbertzero .
\eeqs
In addition, $\operator= \Pone -\Ptwo $ where 
$\Ker \Pone ^* = \Ker \Pone $ and 
\beqs
\Re\big\langle \Pone  v, v\big\rangle_{\Hilbert ^*\times \Hilbert } \geq C_1 \vertiii{v}^2_{\Hilbert } - C_2\vertiii{v}^2_{\Hilbertzero } \quad\tfa v \in \Hilbert .
\eeqs
\end{assumption}

We use later that if $\operator$ satisfies Assumption \ref{ass:1}, then so does $\operator^*$.

Let $\N{\cdot}_{\Hilbertzero } := \sqrt{C_2}\vertiii{\cdot}_{\Hilbertzero }$ and 
\beqs
\N{v}^2_\Hilbert  := \Re\big\langle \Pone  v, v\big\rangle_{\Hilbert ^*\times \Hilbert } + C_2\vertiii{v}^2_{\Hilbertzero };
\eeqs
to see that this is indeed the square of a norm on $\Hilbert$, note that the right-hand side can be written as 
$\langle \Re \cA v, v \rangle_{\Hilbert ^*\times \Hilbert}
:=\tfrac{1}{2}\langle (\cA+ \cA^*) v, v \rangle_{\Hilbert ^*\times \Hilbert}$ for $\cA$ equal to $\Pone$ plus $C_2$ multiplied by the appropriate Riesz map $\Hilbertzero\to\Hilbertzero^*$ in the inner product corresponding to $\vertiii{\cdot}_{\Hilbertzero}$.

These definitions imply that 
\beq\label{eq:Garding}
\Re\big\langle \Pone  v, v\big\rangle_{\Hilbert ^*\times \Hilbert } = \N{v}^2_{\Hilbert } - \N{v}^2_{\Hilbertzero } \quad\tfa v \in \Hilbert 
\eeq
(so that $( u,v)_\Hilbert  = \langle (\Re \Pone ) u,v\rangle_{\Hilbert ^*\times \Hilbert } +( u,v)_{\Hilbertzero }$ by the polarization identity)
and 
\beq\label{eq:P2}
\Re\big\langle\Ptwo  v, v\big\rangle_{\Hilbertzero^*\times\Hilbertzero } \geq C_{{\Ptwo }} \N{v}^2_{\Hilbertzero } \quad\tfa v\in \Hilbertzero 
\eeq
with $C_{{\Ptwo }}:= C_{{\Ptwo }}' (C_2)^{-1}$. Furthermore, by \eqref{eq:Garding}, 
\beq\label{eq:Garding2}
\Re\big\langle \operator v, v\big\rangle_{\Hilbert ^*\times \Hilbert } \geq \N{v}^2_{\Hilbert } - (1+ \N{\Ptwo}_{\Hilbertzero\to\Hilbertzero^*})\N{v}^2_{\Hilbertzero } \quad\tfa v \in \Hilbert .
\eeq

\ble\label{lem:kernel_closed}
$\Ker \Pone $ is closed in $\Hilbertzero $.
\ele

Since the proof of Lemma \ref{lem:kernel_closed} is short, we give it here.

\bpf[Proof of Lemma \ref{lem:kernel_closed}]
Let $\{ u_n\} \in \Ker \Pone $ with $u_n \to u$ in $\Hilbertzero $. We need to show that $u\in \Ker \Pone $. 
By \eqref{eq:Garding}, $\|u_n\|_{\Hilbert }= \|u_n\|_{\Hilbertzero }$. Since $u_n$ is bounded in $\Hilbertzero $, $u_n$ is bounded in $\Hilbert $. 
Since $\Hilbert $ is a Hilbert space, by passing to a subsequence, we see that there exists $w\in \Hilbert $ such that $u_n \rightharpoonup w$ as $n\to \infty$.
We now show that $w\in \Ker \Pone $. 
Let $\cR:\Hilbert^*\to\Hilbert$ be the Riesz map such that $\langle a, b\rangle_{\Hilbert \times \Hilbert ^*} = (a, \cR b)_\Hilbert $ for all $a\in \Hilbert,b\in \Hilbert^*$. 
Since $u_n\in \Ker \Pone $ for all $n$ and $u_n \rightharpoonup w$ as $n\to \infty$,  for all $v\in \Hilbert $, 
\begin{align*}
0= \langle \Pone  u_n, v\rangle_{\Hilbert ^*\times \Hilbert } &= \langle u_n, \Pone ^* v\rangle_{\Hilbert \times\Hilbert ^*} \\
&= (u_n, \cR \Pone ^*v)_\Hilbert  \to (w, \cR \Pone ^* v)_\Hilbert  = \langle w, \Pone ^* v\rangle_{\Hilbert  \times\Hilbert ^*} = \langle \Pone  w, v\rangle_{\Hilbert ^*\times \Hilbert }.
\end{align*}
Therefore $\Pone  w =0$, i.e., $w\in \Ker \Pone $. 

Since $\Ker \Pone ^*= \Ker \Pone $,  $(\Re \Pone ) u_n = (\Re \Pone )w=0$ and thus, since $( u,v)_\Hilbert  = \langle (\Re \Pone ) u,v\rangle_{\Hilbert ^*\times \Hilbert } +( u,v)_{\Hilbertzero }$,
\beqs
(u_n,v)_{\Hilbert } =(u_n,v)_{\Hilbertzero } \quad\tand\quad (w,v)_\Hilbert  = (w,v)_{\Hilbertzero } \quad \tfa v\in \Hilbert.
\eeqs
Therefore, on the one hand,  since $u_n \rightharpoonup w$ in $\Hilbert$ as $n\to \infty$, 
\beqs
(u_n,v)_{\Hilbertzero } =(u_n,v)_{\Hilbert }\to(w,v)_{\Hilbert }=(w,v)_{\Hilbertzero }\quad\tas n\to \infty.
\eeqs
On the other hand $(u_n,v)_{\Hilbertzero } \to (u,v)_{\Hilbertzero }$ since $u_n \to u$ in $\Hilbertzero $. Therefore 
$( w,v)_{\Hilbertzero }
= (u,v)_{\Hilbertzero }$ for all 
$v\in \Hilbert$;
thus $u=w \in \Ker \Pone $.
\epf

By Lemma \ref{lem:kernel_closed}, the $\Hilbertzero $-orthogonal projection onto $\Ker \Pone $ is well-defined; denote this
$\Pi_0^{\Hilbertzero }$ 
and let $\Pi_1^{\Hilbertzero }:= I- \Pi_0^{\Hilbertzero }$. 

Let $\iota:\Hilbertzero \to \Hilbertzero ^*$ be the Riesz map such that 
\beq\label{eq:Riesz}
\langle \iota u,v\rangle_{\Hilbertzero ^*\times \Hilbertzero } :=(u,v)_{\Hilbertzero } \quad\tfa u, v \in \Hilbertzero.
\eeq
We highlight that we write the identification of $\Hilbertzero$ and $\Hilbertzero ^*$ explicitly using $\iota$ because later we consider subspaces of $\Hilbertzero$ and $\Hilbertzero^*$ and need to write the identification of these in terms of the identification $\iota$; see \S\ref{sec:identify} and Part (ii) of Lemma \ref{lem:eta} below.

We now define two non-orthogonal projections $\Pi_0, \Pi_1:\Hilbertzero \to \Hilbertzero$. 
The action of $\iota^{-1}\Ptwo:\Hilbertzero\to \Hilbertzero$ with $\Hilbertzero =\Ker \Pone \oplus (\Ker \Pone )^{\perp}$ can be written as 
\beq\label{eq:matrixP2}
\begin{pmatrix}
\Pi_0^{\Hilbertzero } (\iota^{-1}\Ptwo) \Pi_0^{\Hilbertzero } & \Pi_0^{\Hilbertzero } (\iota^{-1}\Ptwo) \Pi_1^{\Hilbertzero } \\
\Pi_1^{\Hilbertzero } (\iota^{-1}\Ptwo) \Pi_0^{\Hilbertzero }  & \Pi_1^{\Hilbertzero } (\iota^{-1}\Ptwo) \Pi_1^{\Hilbertzero } 
\end{pmatrix}
=:
\begin{pmatrix}
\Ptwo ^{00} & \Ptwo ^{01}\\
\Ptwo ^{10} & \Ptwo ^{11}
\end{pmatrix}.
\eeq
The inequality \eqref{eq:P2} implies, in particular, that $\Ptwo ^{00}$ is invertible as a map from $\Ker \Pone$ to $\Ker \Pone$.

Let $\Pi_0, \Pi_1:\Hilbertzero \to \Hilbertzero$ be defined by 
\beq\label{eq:Pi0}
\Pi_0 := (\Ptwo ^{00})^{-1} \Pi_0^{\Hilbertzero } \iota^{-1}\Ptwo  \quad\tand\quad \Pi_1:= I- \Pi_0.
\eeq
By the matrix form of $\Ptwo $ above,
\beq\label{eq:Pi0matrix}
\Pi_0 = 
\begin{pmatrix}
I & (\Ptwo ^{00})^{-1}\Ptwo ^{01}\\
0 & 0
\end{pmatrix}
\quad\tand\quad
\Pi_1
=
\begin{pmatrix}
0& -(\Ptwo ^{00})^{-1}\Ptwo ^{01}\\
0 & I
\end{pmatrix}.
\eeq

We make the following two remarks:
\bit
\item Lemma \ref{lem:Pi0} below shows that an equivalent characterisation of $\Pi_0$ is that $\Pi_0: \Hilbertzero  \to \Ker \Pone $ is a projection satisfying 
$\Pi_0^{\Hilbertzero} (\iota^{-1}\Ptwo) \Pi_1=0$, 
and Remark \ref{rem:relation1} below shows how 
this characterisation implies that $\Pi_1$ projects to functions that are $\epsilon$-divergence free in the Maxwell case.
\item $\Pi_0$ and $\Pi_1$ depend on $\operator$, although we do not indicate this in the notation for brevity. Our arguments below use \emph{both} $\Pi_0$ and $\Pi_1$ \emph{and} the analogous projections with $\operator$ replaced by $\operator^*$ (note that, since $\Ker \Pone=\Ker \Pone^*$, replacing $\operator$ by $\operator^*$ amounts to replacing $\Ptwo$ by $\Ptwo^*$).
\eit

By \eqref{eq:Garding}, 
\beq\label{eq:simple_norm2}
\N{\Pi_0 v}_{\Hilbert } = \N{\Pi_0 v}_{\Hilbertzero } \quad\tfa v\in \Hilbert,
\eeq
so that, in particular, $\Pi_0:\Hilbert\to \Hilbert$ and $\Pi_1:= I-\Pi_0 :\Hilbert\to \Hilbert$ are both bounded.

\begin{assumption}[Abstract regularity assumptions]\label{ass:2}
Let $\cZ^0 =\Hilbertzero $, $\cZ^1 =\Hilbert$,  $\cZ^{j} \subset \cZ^{j-1}$ for $j=1,\ldots, \newell+1$, with 
$\Hilbertzero^{*}$ dense in $(\cZ^j)^*$ for $j\geq 1$. Let $\Creg>0$, 
$C_1, C_2, C_{{\Ptwo }}'>0$, and let $\operator$ satisfy Assumption \ref{ass:1} with these $C_1, C_2, C_{{\Ptwo }}'>0$.

(i) For $j=1,\ldots, \newell+1$,
\beq\label{eq:proj_reg}
\big\|\Pi_0^{\Hilbertzero}\big\|_{\cZ^j\to \cZ^j}\leq \Creg.
\eeq

(ii) With $\mathsf{D}$ equal $\Pone $ or $\Pone ^*$ or $\Re \Pone $, for $j=2,\ldots, \newell+1$,
\beq\label{eq:er1}
\N{\Pi_1 u}_{\cZ^j} \leq \Creg \Big( \N{\Pi_1 u}_{\Hilbertzero} + \sup_{v\in \Hilbert , \| \iota v\|_{(\cZ^{j-2})^*}=1} \big| \big\langle \mathsf{D} \Pi_1 u, 
\Pi_1 v\big\rangle_{\Hilbert^*\times\Hilbert}\big|
\Big)
\quad\tfa u \in \Hilbert, 
\eeq
with $\Pi_1 u \in \cZ^j$ if the right-hand side is finite.

(iii) With $\mathsf{E}$ equal $\iota^{-1}\Ptwo $ or $\iota^{-1}\Ptwo ^*$ or $\iota^{-1}\Re \Ptwo $, for $j=1,\ldots, \newell+1$,
\beq\label{eq:P2reg}
\N{\mathsf{E}}_{\cZ^j\to \cZ^j} 
\leq \Creg. 
\eeq

(iv) With $\mathsf{E}$ equal $\iota^{-1}\Ptwo $ or $\iota^{-1}\Ptwo ^*$, for $j=1, \ldots, \newell+1$, 
\beq\label{eq:CG1}
\big\|\Pi_0^{\Hilbertzero} u \big\|_{\cZ^j} \leq \Creg \Big( 
\big\| \Pi_0^{\Hilbertzero}\mathsf{E} \Pi_0^{\Hilbertzero}u\big\|_{\cZ^j} + \big\|\Pi_0^{\Hilbertzero}u \big\|_{\Hilbertzero}
\Big)\quad\tfa u\in\Hilbertzero.
\eeq
\end{assumption}

Given the partition $\{\Omega_j\}_{j=1}^n$ from Assumption \ref{ass:regularity}, let 
\beq\label{eq:pw_space}
\Hpw{j}(\Omega):= \Big\{ v\in L^2(\Omega)\, : \, \text{ for all multi-indices $\alpha$ with $|\alpha|\leq j$},\,\, \partial^\alpha (v|_{\Omega_i})\in L^2(\Omega_i)\Big\},
\eeq
and equip $\Hpw{j}(\Omega)$ with the norm 
\beq\label{eq:pw_norm}
\N{v}^2_{\Hpwo{j}(\Omega)}:= 
\sum_{|\alpha|\leq j}\sum_{i=1}^n \int_{\Omega_i} \big| \big(\wn ^{-1}\partial\big)^
\alpha \big( v|_{\Omega_i}\big) \big|^2.
\eeq

\begin{lemma}[Application to Maxwell]\label{lem:Maxwell}
Let $\Hilbertzero  = L^2(\Omega)$ and let $\Hilbert = H_0(\curl , \Omega)$ (i.e., functions in $H(\curl,\Omega)$ with zero tangential trace). 
Given matrix-valued functions $\mu$ and $\epsilon$ with 
\beq\label{eq:coefficients_sign}
\Re \mu^{-1} \geq c>0 \quad\tand\quad \Re \epsilon \geq c>0
\eeq
in $\Omega$ (in the sense of quadratic forms), 
let 
\beq\label{eq:Maxwell_PDE}
\Pone  := \wn ^{-2} \curl \mu^{-1}\curl \quad\tand\quad \Ptwo  := \epsilon.
\eeq
Let 
\beqs
\N{v}^2_{\Hilbertzero } = \N{v}^2_{L^2(\Omega)} \quad\tand\quad
\N{v}^2_{\Hilbert } = \wn ^{-2} \big\|(\Re\mu)^{-1/2} \curl v \big\|^2_{L^2(\Omega)} + \N{v}^2_{L^2(\Omega)}.
\eeqs

(a) Assumption \ref{ass:1} holds with $\Ker \Pone =\Ker \curl$, $\|\operator\|_{\Hilbert\to \Hilbert^*}$ independent of $\wn $, and 
 \eqref{eq:P2} 
satisfied with $C_{\Ptwo }=c$.

(b) Assumption \ref{ass:2} holds, for both $\operator$ and $\operator^*$, if $\Omega,\epsilon,$ and $\mu$ satisfy Assumption \ref{ass:regularity} and, with $\{\Omega_i\}_{i=1}^n$ as in Assumption \ref{ass:regularity},  
$\cZ^j=\newZ^j$ defined by
\beq\label{eq:Zj}
\newZ^j:= H_0(\curl,\Omega) \cap \Big\{ v \in L^2(\Omega) \,:\,v \in \Hpw{j-1}(\Omega)\,\tand\, \curl v \in \Hpw{j-1}(\Omega)\Big\}
\eeq
(observe that $\newZ^1 = H_0(\curl,\Omega)$) and equipped with the norm 
\beq\label{eq:Zjnorm}
\N{v}^2_{\newZ^j_\wn} := \N{v}_{H_\wn(\curl,\Omega)}^2 + \N{v}^2_{\Hpwo{j-1}(\Omega)} + \N{\wn^{-1}\curl v}^2_{\Hpwo{j-1}(\Omega)}.
\eeq
\end{lemma}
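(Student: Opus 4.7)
The plan is to verify parts (a) and (b) in turn. Part (a) is essentially algebraic, while part (b) reduces each of the four items in Assumption \ref{ass:2} to an application of Weber's regularity theorem for $H_0(\curl,\Omega)$-fields with a weighted divergence-free condition.

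For part (a), boundedness of $\operator$ is immediate from the $L^\infty$ bounds on $\mu^{-1}$ and $\epsilon$, the coercivity of $\Ptwo = \epsilon$ from $\Re \epsilon \geq c$, and the G\aa{}rding-type inequality for $\Pone$ from the identity
\beqs
\Re\big\langle \Pone v, v\big\rangle_{\Hilbert^*\times\Hilbert} = \wn^{-2}\int_\Omega \Re(\mu^{-1})\curl v \cdot \overline{\curl v}\geq c\,\wn^{-2}\|\curl v\|^2_{L^2(\Omega)},
\eeqs
combined with the comparability of the curl part of $\|v\|_{\Hilbert}^2$ to $\wn^{-2}\|\curl v\|^2_{L^2(\Omega)}$ that follows from $(\Re \mu)^{-1}\in L^\infty$. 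For the kernel identity, pairing $\Pone v = 0$ with $v$ and using $v\times n = 0$ to discard the boundary term leaves $\int_\Omega \mu^{-1}\curl v \cdot \overline{\curl v} = 0$, so the positivity of $\Re \mu^{-1}$ forces $\curl v = 0$; the same calculation applied to $\Pone^*$ (with coefficient $(\mu^{-1})^*$) yields $\Ker \Pone^* = \Ker \Pone = \Ker \curl \cap H_0(\curl,\Omega)$.

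For part (b), item (i) is obtained by writing $\Pi_1^{\Hilbertzero} u := u - \Pi_0^{\Hilbertzero} u$ and noting that $\Pi_1^{\Hilbertzero} u$ is $L^2$-orthogonal to $\Ker \curl\cap H_0(\curl, \Omega)$; testing against gradients of $H^1$-functions that are constant on each connected component of $\partial \Omega$ yields weak $\dive \Pi_1^{\Hilbertzero} u = 0$ in $\Omega$ with vanishing flux on each boundary component, and since $\curl \Pi_1^{\Hilbertzero} u = \curl u \in \Hpw{j-1}(\Omega)$, Weber's theorem (with weight $\zeta = 1$) places $\Pi_1^{\Hilbertzero} u \in \newZ^j$, whence $\Pi_0^{\Hilbertzero} u \in \newZ^j$ as well. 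Item (ii) is the core elliptic-regularity shift: the definition \eqref{eq:Pi0} of $\Pi_0$ guarantees that $\epsilon\, \Pi_1 u$ is $L^2$-orthogonal to $\Ker \curl$, i.e.\ $\dive(\epsilon \Pi_1 u) = 0$ with matching flux; Weber's theorem with weight $\zeta = \epsilon$ then bounds $\|\Pi_1 u\|_{\newZ^j}$ by $\N{\curl \Pi_1 u}_{\Hpwo{j-1}(\Omega)}$ plus an $L^2$ term, and a second application of Weber's theorem -- this time to the automatically divergence-free field $\mu^{-1}\curl \Pi_1 u$ -- reduces $\N{\curl \Pi_1 u}_{\Hpwo{j-1}(\Omega)}$ to control of $\curl(\mu^{-1}\curl \Pi_1 u) = \wn^2 \Pone \Pi_1 u$ in the $\Hpwo{j-2}(\Omega)$ norm. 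Finally, this piecewise $H^{j-2}$ norm coincides with the duality expression in \eqref{eq:er1}, because $\Pone \Pi_1 u$ annihilates $\Ker \Pone^* = \Ker \Pone$ and so restricting the test functions to $\Pi_1 v$ does not shrink the dual norm; the cases $\mathsf{D}=\Pone^*$ and $\mathsf{D}=\Re \Pone$ are handled by the same argument after replacing $\mu^{-1}$ by $(\mu^{-1})^*$ or $\Re \mu^{-1}$. Item (iii) is the Leibniz rule for piecewise smooth coefficients: Assumption \ref{ass:regularity} provides $\mu,\epsilon \in C^{\newell,1}(\overline{\Omega_j})$, which preserves $\Hpw{j-1}(\Omega)$ regularity of $v$ and, via $\curl(\epsilon v) = \epsilon\curl v + (\nabla \epsilon)\times v$ interpreted piecewise, of $\curl v$. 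Item (iv) is classical elliptic regularity for divergence-form second-order operators: expressing $\Pi_0^{\Hilbertzero}u = \nabla p$ and $\Pi_0^{\Hilbertzero}\iota^{-1}\Ptwo\,\Pi_0^{\Hilbertzero}u = \nabla q$ with $p,q$ constant on the components of $\partial\Omega$, the orthogonality relations yield $\dive(\epsilon\nabla p) = \Delta q$ in $\Omega$ with matching Neumann-type data, and piecewise-smooth elliptic regularity for $-\dive(\epsilon \nabla \cdot)$ on the partition $\{\Omega_i\}$ gives the bound.

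The main obstacle is the two-step application of Weber's theorem in item (b)(ii): Weber must first be used on $\Pi_1 u$ with the $\epsilon$-divergence condition coming from the definition of $\Pi_0$, and then a second time on $\mu^{-1}\curl \Pi_1 u$ to convert $\Hpw{j-1}(\Omega)$ control of $\curl \Pi_1 u$ into $\Hpw{j-2}(\Omega)$ control of $\Pone \Pi_1 u$. The final reformulation of this piecewise $H^{j-2}$ norm as the $\Pi_1$-restricted duality pairing against $\iota v\in (\newZ^{j-2})^*$ is where the structural hypothesis $\Ker \Pone = \Ker \Pone^*$ is essential: it is precisely this equality that lets us restrict test functions to the range of $\Pi_1$ without weakening the dual norm.
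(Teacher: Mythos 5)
Your parts (a) and (b)(ii) track the paper's argument closely (part (a) is routine; for (b)(ii) the paper packages the two-step Weber argument you describe as Lemma~\ref{lem:Maxwell_reg}, applying Theorem~\ref{thm:Weber} once with $\zeta=\epsilon$ to $\Pi_1 u$ and once with $\zeta=\mu^{-1}$ to $\mu^{-1}\curl\Pi_1 u$; it also applies Theorem~\ref{thm:Weber} a third time to $f$ itself to rewrite $\|f\|_{\Hpwo{j-2}}$ in terms of $\|f\|_{\newZ_\wn^{j-2}}$, a step your sketch elides). Two points, however, need attention.

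The substantive gap is in item (b)(iv). You write $\Pi_0^{\Hilbertzero}u = \nabla p$, but this is false unless the topology is trivial: $\Ker\Pone = \nabla H^1_0(\Omega)\oplus K_N(\Omega)$, and the space $K_N(\Omega)$ of harmonic fields with vanishing tangential trace is nonzero whenever $\Omega$ is not simply connected. The paper resolves this by first showing that $\Pi^{\Hilbertzero}_{K_N(\Omega)}$ is maximally smoothing --- since $K_N(\Omega)\subset H^{\newell+1}(\Omega)$ by elliptic regularity for the system $\curl\curl u - \grad(\dive u)=0$, $u\times n=0$ --- and then reducing \eqref{eq:CG1} to the analogous bound \eqref{eq:CG1alt} stated purely for $\Pi^{\Hilbertzero}_{\nabla H^1_0(\Omega)}$, to which your $\dive(\epsilon\nabla p)=\Delta q$ argument applies. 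Without isolating and separately disposing of the $K_N(\Omega)$ component, (iv) is not proved. There is also a smaller error in (b)(iii): the identity $\curl(\epsilon v)=\epsilon\curl v + (\nabla\epsilon)\times v$ is the scalar formula; for matrix-valued $\epsilon$ the curl of $\epsilon v$ involves the full gradient of $v$, not just $\curl v$, so one cannot argue componentwise in this way. The paper instead cites Grisvard's multiplication theorem for piecewise Sobolev spaces.

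Your route through (b)(i) is genuinely different from the paper's and worth comparing. You apply Theorem~\ref{thm:Weber} directly to $\Pi_1^{\Hilbertzero}u = u - \Pi_0^{\Hilbertzero}u$ and then subtract, which is a clean shortcut; note, though, that the boundary condition Weber actually needs is $\Pi_1^{\Hilbertzero}u\times n=0$ (which holds because $u$ and $\Pi_0^{\Hilbertzero}u$ are both in $H_0(\curl,\Omega)$), not the zero-flux condition your text emphasizes. The paper instead decomposes $\Pi_0^{\Hilbertzero}$ into $\Pi^{\Hilbertzero}_{K_N(\Omega)} + \Pi^{\Hilbertzero}_{\nabla H^1_0(\Omega)}$ and treats each piece directly. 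The longer route is not gratuitous: the maximal-smoothing property of $\Pi^{\Hilbertzero}_{K_N(\Omega)}$ that it establishes is exactly the fact required to fix the $K_N(\Omega)$ gap in (iv), and it is also used in \S\ref{sec:10.2} to verify the hypotheses of the second part of Lemma~\ref{lem:oscil}. So even with your shortcut in (i), you would still need the $K_N(\Omega)$ regularity result for the rest of the lemma.
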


Lemma \ref{lem:Maxwell} is proved in \S\ref{sec:lemMaxwellProof} below. 

\bre[The regularity assumptions on $\epsilon,\mu$, and $\partial\Omega$]\label{rem:regularity}
In \S\ref{sec:lemMaxwellProofb} below, we see that 
\bit
\item 
\eqref{eq:proj_reg} 
\eqref{eq:er1},
and \eqref{eq:CG1}  hold when $\mu$ and $\epsilon$ are piecewise $C^{\newnewell}$
and the connected components of $\partial\Omega_j$, $j=1,\ldots,n$, are all $C^{\newnewell+1}$ -- this is shown
using the classic regularity results of Weber \cite{We:81}  (see Theorem \ref{thm:Weber} below) -- and 
 \item \eqref{eq:P2reg} holds when $\epsilon$ is piecewise $C^{\newnewell,1}$ by a standard inequality involving Sobolev norms; see, e.g., \cite[Theorem 1.4.1.1, page 21]{Gr:85}.
\eit
The combination of these requirements is then Assumption \ref{ass:regularity}.
\ere

\bre[$\Pi_1$ projects to functions that are $\epsilon$-divergence free]
\label{rem:relation1}
We show in \eqref{eq:key4} below that $\Pi_0$ can equivalently be defined by the property $\Pi_0^{\Hilbertzero} (\iota^{-1}\Ptwo) \Pi_1=0$.
Therefore, in the Maxwell setting of Lemma \ref{lem:Maxwell}, given $v \in L^2(\Omega)$, $\Pi_1 v\in L^2(\Omega)$ is the solution to 
\beq\label{eq:green_plant1}
\big( \epsilon \Pi_1 v, w\big)_{L^2(\Omega)} = 0 \quad\tfa w \in 
\Ker \curl;
\eeq
i.e.,
$\epsilon \Pi_1 $ is $L^2$ orthogonal to $\Ker \Pone= \Ker \curl$.
Since $\nabla H^1_0(\Omega)\subset \Ker \curl$, $\Pi_1$ projects, in particular, to functions that are $\epsilon$-divergence free.
Finally, \eqref{eq:green_plant1} shows that $\Pi_0$ is equivalent to the projection $\boldsymbol{\Pi}_0^{\rm c}$ defined by \cite[Equation 2.3]{ChEr:23} -- note that \cite{ChEr:23} consider the case when $\epsilon$ is real and weight the $L^2$ inner product with $\epsilon$.
\ere

Having defined the spaces $\newZ^j$, we now define the notion of $\wn$-oscillatory data used in Theorem \ref{thm:intro}.

\begin{definition}\label{def:oscil}
$f$ is $\wn$-oscillatory with constant $\Cosc>0$ and regularity index $\newell$ if one of the two following conditions holds.

(i) $f\in \newZ^{\newell+1}$ and 
\beq\label{eq:Cosc}
\N{f}_{\newZ^{\newell+1}_\wn}
\leq \Cosc \N{f}_{(H_\wn (\curl,\Omega)^*)}.
\eeq

(ii) $f\in 
\newZ^{\newell-1}$
with $\dive f=0$ and \eqref{eq:Cosc} holds with $\newell+1$ replaced by $\newell-1$. 
\end{definition}

\subsection{The Galerkin method}

Let $\Hilbert _h \subset \Hilbert $ be closed, and let $\Pi_h: \Hilbert \to\Hilbert _h$ be the orthogonal projection.
Given $u\in \Hilbert $, we seek an approximation of $u$, $u_h$, satisfying 
\beq\label{eq:Gog}
\big\langle \operator(u-u_h), v_h\big\rangle_{\Hilbert ^*\times \Hilbert } = 0 \quad \tfa v_h \in \Hilbert _h.
\eeq
Observe that, since $P=\Pone-\Ptwo$ and $\Ker \Pone=\Ker \Pone^*$, the Galerkin orthogonality \eqref{eq:Gog} implies that
\beq\label{eq:Gog2}
\big\langle \Ptwo (u-u_h), v_h\big\rangle_{\Hilbertzero^*\times\Hilbertzero} = 0 \quad \tfa v_h \in \Hilbert _h \cap \Ker \Pone .
\eeq

\subsection{The quantity $\gamma_{\rm dv}(\operator)$}

Let 
\begin{align}\nonumber
&\gamma_{\rm dv}(\operator):=\\
&\quad \sup \bigg\{
\frac{\N{\Pi_0 w_h}_{\Hilbertzero }}{\N{w_h}_{\Hilbert }} \, :\, w_h\in \Hilbert_h \text{ satisfies } \big\langle \Ptwo  w_h, v_h \big\rangle_{\Hilbertzero^*\times\Hilbertzero}=0 \,\,\tfa v_h \in \Hilbert _h \cap \Ker \Pone 
\bigg\}.
\label{eq:gamma_dv}
\end{align}
We write $\gamma_{\rm dv}(\operator)$, since we consider below both $\gamma_{\rm dv}(\operator)$ and $\gamma_{\rm dv}(\operator^*)$ (and we highlight again that $\Pi_0$ depends on $\operator$).

Note that if $v_h\in \Hilbert_h \cap \Ker \Pone$ in the definition of $\gamma_{\rm dv}(\operator)$ \eqref{eq:gamma_dv} is changed to $v\in \Hilbert\cap \Ker\Pone$, then $\gamma_{\rm dv}(\operator)=0$. Indeed, if $w$ satisfies $( \iota^{-1}\Ptwo  w, v)_{\Hilbertzero}=0$ for all $v\in \Ker \Pone $, 
then $\iota^{-1}\Ptwo w= \Pi_1^{\Hilbertzero } z$ for some $z\in \Hilbertzero$ (since $\Hilbertzero = (\Ker \Pone)\oplus (\Ker \Pone)^\perp$);
i.e. $w=(\iota^{-1}\Ptwo)^{-1} \Pi_1^{\Hilbertzero } z$ for some $z\in \Hilbertzero$.
 Then, by \eqref{eq:key3} below,  $\Pi_0 w=0$. 

Comparing \eqref{eq:Gog2} and \eqref{eq:gamma_dv}, we see that, since the Galerkin error $u-u_h \not \in \Hilbert_h$, $u-u_h$ is not contained 
in the set of $w_h$ considered in \eqref{eq:gamma_dv}. Nevertheless, controlling $\gamma_{\rm dv}(\operator)$ gives us a way to control $\Pi_0(u-u_h)$,
with Lemma \ref{lem:gamma_dv} below showing that, for a certain projection $\Pi_h^+$,  $\Pi_0 \Pi^+_h(u-u_h)$ is controlled by $\gamma_{\rm dv}(\operator)$, and then 
Lemma \ref{lem:Pi0error} controlling $\Pi_0(u-u_h)$.

\bre[$\gamma_{\rm dv}(\operator)$ is the divergence conformity factor]\label{rem:relation2}
In the Maxwell setting of Lemma \ref{lem:Maxwell},   
the $w_h$ considered in 
\eqref{eq:gamma_dv} are discretely $\epsilon$-divergence free. 
By Remark \ref{rem:relation1}, 
if $\Pi_1 w_h=w_h$ (i.e., $\Pi_0w_h=0$) then $w_h$ is $\epsilon$-divergence free. The quantity $\gamma_{\rm dv}(\operator)$
 is therefore the familiar \emph{divergence conformity factor}, 
measuring how much a finite-element function that is discretely $\epsilon$-divergence-free is not pointwise $\epsilon$-divergence-free, with 
this mismatch 
central to the analysis of the Maxwell FEM using N\'ed\'elec elements, as discussed in \S\ref{sec:context}; see also 
\cite[Lemma 5.2]{ArFaWi:00}, \cite[Lemma 4.5]{Hi:02}, \cite[Lemma 7.6]{Mo:03}, and 
\cite[Lemma 5.2]{ChEr:23} (with the notation $\gamma_{\rm dv}$ taken from \cite[Equation 3.9]{ChEr:23}).
\ere

\subsection{The main abstract theorem}\label{sec:abs_thm}

\begin{theorem}[The main result in abstract form]\label{thm:abs1}
Fix the constants in Assumptions \ref{ass:1} and \ref{ass:2} (i.e., $C_{{\Ptwo }}', C_1, C_2$ in Assumption \ref{ass:1} and $\Creg$ in Assumption \ref{ass:2}), fix $\newell \in\mathbb{Z}^+$ and spaces $\cZ^j, j=1,\ldots,\newell+1$, and let $\Coscil, C_{\Pi_0}>0$. 
Then there exist $c,C>0$ such that for all $\operator$ such that 
\bit
\item
Assumption \ref{ass:1} holds for $\operator$ (with $C_{{\Ptwo }}', C_1, C_2$ fixed above), 
\item
Assumption \ref{ass:2} holds for both $\operator$ and $\operator^*$ (with $C_{{\Ptwo }}', C_1, C_2, \Creg$, $\newell$, and $\cZ^j, j=1,\ldots,\newell+1$ fixed above) and, 
\item given $f\in \Hilbert^*$, the solution to 
the equation $Pu =f$ 
exists and is unique,
\eit
if
\beq\label{eq:sufficiently_small}
\gamma_{\rm dv}(\operator) + \gamma_{\rm dv}(\operator^*)
+\Big(\N{I-\Pi_h}_{\cZ^{\newell+1}\to \Hilbert }\Big)^2 \big( 1 + \N{\Rs\Pi_1^*}_{\Hilbertzero^* \to \Hilbertzero }\big)\leq c ,
\eeq
then $u_h$ defined by \eqref{eq:Gog} exists, is unique, and satisfies
\beq\label{eq:qo_abs}
\N{u-u_h}_{\Hilbert } \leq 
C\Big( 1 + \N{I-\Pi_h}_{\cZ^{\newell+1}\to \Hilbert } \big( 1 + \N{\Rs\Pi_1^*}_{\Hilbertzero^* \to \Hilbertzero }\big)\Big)
\N{(I-\Pi_h)u}_{\Hilbert }.
\eeq

In addition, 
if 
$\|\iota^{-1}f \|_{\cZ^{\newell+1}} \leq \Coscil \|f\|_{\Hilbert^*}$ 
and \eqref{eq:sufficiently_small} holds, then 
\beq\label{eq:relative_error_abs}
\frac{\N{u-u_h}_{\Hilbert }
}{
\N{u}_{\Hilbert}
}
 \leq 
C\Big( 1 + \N{I-\Pi_h}_{\cZ^{\newell+1}\to \Hilbert } \big( 1 + \N{\Rs\Pi_1^*}_{\Hilbertzero^* \to \Hilbertzero }\big)\Big)
\N{I-\Pi_h}_{\cZ^{\newell+1}\to \Hilbert }.
\eeq
Furthermore, if $\Pi_0^*f=\widetilde\Pi_0^*f$ for some $\widetilde\Pi_0:\Hilbertzero\to \Hilbertzero$ satisfying $\Pi_0 \widetilde\Pi_0 = \widetilde\Pi_0$ (i.e., $\widetilde\Pi_0$ maps into $\Ker \Pone$) and 
\beq\label{eq:tired2}
\|\iota^{-1}\widetilde\Pi_0^*\iota\|_{\cZ^{\newell-1}
\to \cZ^{\newell+1}}\leq C_{\Pi_0},
\eeq
then 
the assumption 
$\|\iota^{-1}f \|_{\cZ^{\newell+1}} \leq \Coscil \|f\|_{\Hilbert^*}$ can be relaxed to $\|\iota^{-1} f \|_{\cZ^{\newell-1}} \leq \Coscil \|f\|_{\Hilbert^*}.$
\end{theorem}

Theorem \ref{thm:abs1} is proved in \S\ref{sec:abs_proof} below. We make three remarks:

(i) By the order of quantifiers in Theorem \ref{thm:abs1}, $c$ and $C$
in the theorem depend on  
$C_{\Ptwo}'$, $C_1$, and $C_2$ in Assumption \ref{ass:1}, $\Creg$ in Assumption \ref{ass:2}, and the fixed constants $\Coscil$ and $C_{\Pi_0}$. 
In the proof of Theorem \ref{thm:abs1}, $C_{\Ptwo}'$, $C_1$, $C_2$, $\Creg$, $\Coscil$ and $C_{\Pi_0}$ are fixed at the outset, but, for brevity, this is not stated explicitly in each intermediate result. In the proofs, the letter $C$ denotes a constant that, in principle, depends on $C_{\Ptwo}'$, $C_1$, $C_2$, $\Creg$, $\Coscil$ and $C_{\Pi_0}$, but nothing else.

(ii) The additional projection $\widetilde\Pi_0^*$ in the last part of the theorem caters for the fact that, in the Maxwell setting, the kernel of the curl does not only consist of gradients when $\partial\Omega$ has more than one connected component (see \eqref{eq:KN} below), but the condition that $\dive f= \dive(\epsilon E)=0$ (i.e., $f$ is orthogonal to gradients) is nevertheless enough for $E$ to gain regularity with respect to $f$ (see Theorem \ref{thm:Weber} below). 

(iii) The relative-error bound \eqref{eq:relative_error_abs} follows from the preasymptotic error bound \eqref{eq:qo_abs} and 
the following regularity result (proved in \S\ref{sec:regularity} below). 

\ble[$\wn $-oscillatory data implies $\wn $-oscillatory solution]\label{lem:oscil}
Suppose that 
Assumption \ref{ass:2} holds  for some $\newell \in\mathbb{Z}^+$ and spaces $\cZ^j, j=1,\ldots,\newell+1$. 
Given $\Cosc>0$ 
there exists $C'>0$ such that the following is true. If $\operator u=f$ with $f\in \Hilbertzero^*$ satisfying
\beq\label{eq:oscil}
\big\|\iota^{-1} f \big\|_{\cZ^{\newell+1}} \leq \Cosc \N{f}_{\Hilbert^*}, \quad\text{ then } \quad 
\N{u}_{\cZ^{\newell+1}} \leq C' \N{u}_{\Hilbert}.
\eeq
Furthermore, if $\Pi_0^*f=\widetilde\Pi_0^*f$ for some $\widetilde\Pi_0:\Hilbertzero\to \Hilbertzero$ satisfying $\Pi_0 \widetilde\Pi_0 = \widetilde\Pi_0$ and 
\eqref{eq:tired2},
then
the assumption 
$\|\iota^{-1} f \|_{\cZ^{\newell+1}} \leq \Cosc \|f\|_{\Hilbert^*}$ can be relaxed to $\|\iota^{-1}f \|_{\cZ^{\newell-1}} \leq C \|f\|_{\Hilbert^*}.$
\ele

\section{Properties of $\Pi_0$ and $\Pi_1$}\label{sec:projections}

By its definition \eqref{eq:Pi0}, $\Pi_0: \Hilbertzero \to \Ker\Pone$. Since $\Ker \Pone =\Ker \Pone^*$, 
\beq\label{eq:key1}
\Pone =\Pone  \Pi_1=\Pi_1^* \Pone =\Pi_1^* \Pone  \Pi_1.
\eeq

\ble[Properties and equivalent definitions of $\Pi_0$]
\label{lem:Pi0}
The following are equivalent

(a) 
$\Pi_0 := (\Ptwo ^{00})^{-1} \Pi_0^{\Hilbertzero } (\iota^{-1}\Ptwo) $;
i.e., $\Pi_0$ is given by \eqref{eq:Pi0matrix}. 

(b) $\Pi_0: \Hilbertzero  \to \Ker \Pone $ is a projection satisfying 
\beq\label{eq:key2}
\Pi_0^* \Ptwo  \Pi_1=0.
\eeq

(c) $\Pi_0: \Hilbertzero  \to \Ker \Pone $ is a projection satisfying 
\beq\label{eq:key3}
\Pi_0 (\iota^{-1}\Ptwo)^{-1}\Pi_1^{\Hilbertzero}=0.
\eeq

(d) $\Pi_0: \Hilbertzero  \to \Ker \Pone $ is a projection satisfying 
\beq\label{eq:key4}
\Pi_0^{\Hilbertzero} (\iota^{-1}\Ptwo) \Pi_1=0.
\eeq
\ele

We highlight that the property \eqref{eq:key2} is essential in the duality arguments below (since it means 
that the matrix representation of $\operator$ as a map $(\Pi_0 \Hilbert , \Pi_1 \Hilbert )\to(\Pi_0^* \Hilbert ^*, \Pi_1^* \Hilbert ^*)$ is lower triangular; see \eqref{eq:matrices} below). The property \eqref{eq:key3} is essential for $\gamma_{\rm dv}(\operator)$ to $\to 0$ as $\Hilbert_h\to \Hilbert$ (as explained in the text after \eqref{eq:gamma_dv}).
Finally, recall from Remark \ref{rem:relation1} that \eqref{eq:key4} implies, in the Maxwell setting,  that
$\epsilon \Pi_1 $ is $L^2$ orthogonal to $\Ker \Pone= \Ker \curl$ and thus $\Pi_1$ projects, in particular, to functions that are $\epsilon$-divergence free.

\bpf[Proof of Lemma \ref{lem:Pi0}]
We prove that (a) implies (b), (c), and (d), and then that each of (b), (c), and (d) imply (a). 
(a) immediately implies (c) since $\Pi_0^{\Hilbertzero}\Pi_1^{\Hilbertzero}=0$.
To see that (a) implies (d), 
observe that 
\beq\label{eq:coffee1}
\iota^{-1}\Ptwo  \Pi_1 = 
\begin{pmatrix}
\Ptwo ^{00} & \Ptwo ^{01}\\
\Ptwo ^{10} & \Ptwo ^{11}
\end{pmatrix}
\begin{pmatrix}
0& -(\Ptwo ^{00})^{-1}\Ptwo ^{01}\\
0 & I
\end{pmatrix}
=
\begin{pmatrix}
0&0\\
0 & -\Ptwo ^{10} (\Ptwo ^{00})^{-1} \Ptwo ^{01} + \Ptwo ^{11}
\end{pmatrix},
\eeq
so that (d) holds. To see that (a) implies (b), 
we first claim that 
\beq\label{eq:iota_matrix}
\iota = \begin{pmatrix}
\iota & 0\\
0 &\iota \\
\end{pmatrix}
\eeq
as a map from $\Hilbertzero = \big(\Pi^{\Hilbertzero}_0 \Hilbertzero, \Pi_1^{\Hilbertzero}\Hilbertzero\big)$ to $\Hilbertzero^* = \big((\Pi^{\Hilbertzero}_0)^* \Hilbertzero^*, (\Pi_1^{\Hilbertzero})^*\Hilbertzero^*\big)$.
Indeed, since $\Pi_0^{\Hilbertzero}$ and $\Pi_1^{\Hilbertzero}$ are 
are $\Hilbertzero$-orthogonal projections, they are self-adjoint in $(\cdot,\cdot)_{\Hilbertzero}$. Therefore, since also $\Pi_0^{\Hilbertzero} \Pi_1^{\Hilbertzero}=0$,
\begin{align*}
\langle \iota u,v \rangle_{\Hilbertzero^*\times \Hilbertzero} = (u,v)_{\Hilbertzero} &= (\Pi_0^{\Hilbertzero} u, \Pi_0^{\Hilbertzero} v)_{\Hilbertzero} + (\Pi_1^{\Hilbertzero} u, \Pi_1^{\Hilbertzero} v)_{\Hilbertzero}\\
&= \langle\iota \Pi_0^{\Hilbertzero} u, \Pi_0^{\Hilbertzero} v\rangle_{\Hilbertzero^*\times\Hilbertzero} + \langle\iota \Pi_1^{\Hilbertzero} u, \Pi_1^{\Hilbertzero} v\rangle_{\Hilbertzero^*\times\Hilbertzero},
\end{align*}
which implies \eqref{eq:iota_matrix}.
The combination of \eqref{eq:coffee1} and \eqref{eq:iota_matrix} implies that 
\begin{align*}
\Pi_1^* \Ptwo  \Pi_1 &= 
\begin{pmatrix}
0& 0\\
-(\Ptwo ^{01})^*((\Ptwo ^{00})^{-1})^* & I
\end{pmatrix}
\begin{pmatrix}
\iota & 0\\
0 &\iota \\
\end{pmatrix}
\begin{pmatrix}
0&0\\
0 & -\Ptwo ^{10} (\Ptwo ^{00})^{-1} \Ptwo ^{01} + \Ptwo ^{11}
\end{pmatrix}\\
&=
\begin{pmatrix}
0&0\\
0 & \iota(-\Ptwo ^{10} (\Ptwo ^{00})^{-1} \Ptwo ^{01} + \Ptwo ^{11})
\end{pmatrix}
=\Ptwo  \Pi_1,
\end{align*}
i.e., (a) implies (b). 

For (b) implies (a):~since $\Pi_0^2=\Pi_0$ and $\Pi_0 :\Hilbertzero  \to \Ker \Pone $, the matrix representation of $\Pi_0$ 
as a map from $\Hilbertzero = \big(\Pi^{\Hilbertzero}_0 \Hilbertzero, \Pi_1^{\Hilbertzero}\Hilbertzero\big)$ to itself 
is 
\beq\label{eq:generalPi0}
\Pi_0 = 
\begin{pmatrix}
I & A\\
0 & 0
\end{pmatrix}
\eeq
for some $A$. Then, by a similar calculation to that in \eqref{eq:coffee1},
\beqs
\Pi_0^* \Ptwo  \Pi_1 = 
\begin{pmatrix}
0 & \iota(-\Ptwo ^{00} A + \Ptwo ^{01}) \\
0 & A^*\iota(-\Ptwo ^{00} A + \Ptwo ^{01}) 
\end{pmatrix},
\eeqs
so that (b) implies that $A= (\Ptwo ^{00})^{-1}\Ptwo ^{01}$ (i.e., (a) holds).

Similarly, for (d) implies (a):~if $\Pi_0$ is given by \eqref{eq:generalPi0}, then
\beqs
\Pi_0^{\Hilbertzero} (\iota^{-1}\Ptwo) \Pi_1 = \begin{pmatrix}
0 & -\Ptwo ^{00} A + \Ptwo ^{01} \\
0 & 0
\end{pmatrix},
\eeqs
so that, again, $A= (\Ptwo ^{00})^{-1}\Ptwo ^{01}$.

For (c) implies (a):~$\Pi_0 (\iota^{-1}\Ptwo)^{-1} \Pi_1^{\Hilbertzero }=0$ implies that $\Pi_0 (\iota^{-1}\Ptwo) ^{-1} = \Pi_0 (\iota^{-1}\Ptwo)^{-1} \Pi_0^{\Hilbertzero }$, so that 
$\Pi_0 = \Pi_0(\iota^{-1} \Ptwo)^{-1} \Pi_0^{\Hilbertzero }(\iota^{-1}\Ptwo)$; i.e., 
\beq\label{eq:wine1}
\Pi_0 = B \Pi_0^{\Hilbertzero }(\iota^{-1}\Ptwo) \quad \text{ for some $B:\Hilbertzero\to \Hilbertzero$.}
\eeq
We'll show that $B= (\Ptwo ^{00})^{-1}$ to complete the proof. Since $\Pi_0, \Pi_0^{\Hilbertzero}: \Hilbertzero  \to \Ker \Pone $, \eqref{eq:wine1} implies that $B: \Ker \Pone \to \Ker \Pone $. Furthermore, by \eqref{eq:generalPi0}, 
\beq\label{eq:wine2}
\Pi_0^{\Hilbertzero }=\Pi_0 \Pi_0^{\Hilbertzero }.
\eeq
The combination of \eqref{eq:wine1} and \eqref{eq:wine2} implies that 
\beqs
\Pi^{\Hilbertzero }_0 =B  \Pi_0^{\Hilbertzero }(\iota^{-1}\Ptwo)  \Pi_0^{\Hilbertzero } = B \Ptwo ^{00}
\eeqs
(by the definition of $\Ptwo^{00}$ in \eqref{eq:matrixP2}); 
i.e., on $\Ker \Pone $, $B \Ptwo ^{00}$ is the identity. Therefore, $B = (\Ptwo ^{00})^{-1}$ as an operator $\Ker \Pone  \to \Ker \Pone $ and the proof is complete.
\epf

\ble[$\Pi_0$ and $\Pi_1$ preserve regularity]\label{lem:Pireg}
If Assumption \ref{ass:2} holds then there exists $C>0$ such that for $j=0,\ldots, \newell+1$, 
\beq\label{eq:sun2}
\N{\Pi_0}_{\cZ^{j}\to \cZ^{j}} 
\leq C 
\eeq
and
\beq\label{eq:sun1}
\big\|\iota^{-1}\Pi_0^*\iota\big\|_{\cZ^{j}\to \cZ^{j}}
=\big\|\iota \Pi_0 \iota^{-1}\big\|_{(\cZ^{j})^*\to (\cZ^{j})^*} 
\leq C,
\eeq
with analogous bounds holding for $\Pi_1$ since $\Pi_0= I-\Pi_1$.
\ele
\bpf
By the definitions of $\Pi_0$ \eqref{eq:Pi0}, $\Ptwo^{00}$ \eqref{eq:matrixP2}, and $\Ptwo^{01}$, to prove \eqref{eq:sun2} it is sufficient to prove that
\beq\label{eq:CG3}
\big( \Pi^{\Hilbertzero}_0 (\iota^{-1}\Ptwo) \Pi^{\Hilbertzero}_0\big)^{-1} \big( \Pi^{\Hilbertzero}_0 (\iota^{-1}\Ptwo) \Pi^{\Hilbertzero}_1\big) : \Pi^{\Hilbertzero}_1 \cZ^j \to \Pi^{\Hilbertzero}_0\cZ^j.
\eeq
By \eqref{eq:proj_reg} and \eqref{eq:P2reg} (with $\mathsf{E}=\iota^{-1}\Ptwo$), 
\beqs
\big\| \Pi^{\Hilbertzero}_0(\iota^{-1}\Ptwo) \Pi^{\Hilbertzero}_1\big\|_{\cZ^j \to \cZ^j} \leq C.
\eeqs
Therefore, to prove \eqref{eq:sun2} it is sufficient to prove that 
\beq\label{eq:CG2}
\big\| \big( \Pi^{\Hilbertzero}_0(\iota^{-1}\Ptwo) \Pi^{\Hilbertzero}_0\big)^{-1}\big\|_{\Pi^{\Hilbertzero}_0\cZ^j \to \Pi^{\Hilbertzero}_0\cZ^j} \leq C.
\eeq
However, \eqref{eq:CG2} follows from \eqref{eq:CG1} with $\mathsf{E}= \iota^{-1}\Ptwo$ and the fact that $(\Pi^{\Hilbertzero}_0(\iota^{-1}\Ptwo)\Pi^{\Hilbertzero}_0)^{-1} : \Pi^{\Hilbertzero}_0\Hilbertzero\to \Pi^{\Hilbertzero}_0\Hilbertzero$ is bounded by \eqref{eq:P2}.

For \eqref{eq:sun1}, by \eqref{eq:iota_matrix} and the definition of $\Pi_0$,
\beqs
\iota^{-1}\Pi^*_0 \iota = 
\begin{pmatrix}
I & 0 \\
\iota^{-1} (\Ptwo^{01})^* \big( (\Ptwo^{00})^*\big)^{-1}\iota &0
\end{pmatrix}.
\eeqs
Now, by \eqref{eq:Riesz}, $\iota^{*}= \iota$ and $(\Pi^{\Hilbertzero}_0)^*=\iota \Pi^{\Hilbertzero}_0 \iota^{-1}$. Therefore, by \eqref{eq:matrixP2},
\beqs
\iota^{-1} (\Ptwo^{01})^* \big( (\Ptwo^{00})^*\big)^{-1}\iota = \Pi^{\Hilbertzero}_1( \iota^{-1}\Ptwo^*) \Pi^{\Hilbertzero}_0 \big( \Pi^{\Hilbertzero}_0(\iota^{-1}\Ptwo^*) \Pi^{\Hilbertzero}_0\big)^{-1}.
\eeqs
Comparing this last expression to \eqref{eq:CG3}, we see that the bound
\eqref{eq:sun1} follows in a similar way to \eqref{eq:sun2}, now using \eqref{eq:CG1} with $\mathsf{E}= \iota^{-1}\Ptwo^*$. 
\epf

\section{Matrix representation of $\operator$, regularity shift of $\Rs\Pi_1^*$, and proof of Lemma \ref{lem:oscil}}\label{sec:regularity}

The combination of $\Pi_0+\Pi_1=I$ and either $\Pi_0 \Pi_1=0$ or $\Pi_j^2 =\Pi_j, j=1,2$, implies that, for all $f\in \Hilbert ^*$ and $v\in\Hilbert $,
\beqs
\big\langle f , v \big\rangle_{\Hilbert ^* \times\Hilbert } = \big\langle \Pi_0^* f, \Pi_0 v\big\rangle_{\Hilbert ^* \times\Hilbert } + \big\langle \Pi_1^* f, \Pi_1 v\big\rangle_{\Hilbert ^* \times\Hilbert }.
\eeqs
Thus, given $A:\Hilbert \to \Hilbert ^*$, for all $u,v\in \Hilbert $,
\beqs
\big\langle Au, v \big\rangle_{\Hilbert ^* \times\Hilbert } = \big\langle \Pi_0^* A (\Pi_0 u + \Pi_1u), \Pi_0 v\big\rangle_{\Hilbert ^* \times\Hilbert } + \big\langle \Pi_1^* A (\Pi_0 u + \Pi_1u), \Pi_1 v\big\rangle_{\Hilbert ^* \times\Hilbert }.
\eeqs
Therefore, given $A:\Hilbert \to\Hilbert ^*$, its matrix representation as a map $(\Pi_0 \Hilbert , \Pi_1 \Hilbert )\to(\Pi_0^* \Hilbert ^*, \Pi_1^* \Hilbert ^*)$ is
\beq\label{eq:matrixnotation}
\begin{pmatrix}
\Pi_0^* A \Pi_0 & \Pi_0^* A\Pi_1 \\
\Pi_1^* A\Pi_0  & \Pi_1^* A\Pi_1 
\end{pmatrix}
=:
\begin{pmatrix}
A_{00} & A_{01}\\
A_{10} & A_{11}
\end{pmatrix}
\eeq
With this notation, by \eqref{eq:key1} and \eqref{eq:key2},
\beq\label{eq:matrices}
\Pone = 
\begin{pmatrix}
0& 0\\
0& \Pone_{11}
\end{pmatrix},
\quad
\Ptwo  = 
\begin{pmatrix}
\Ptwo_{00} & 0\\
\Ptwo_{10} & \Ptwo_{11}
\end{pmatrix}
\quad\text{ and thus }\quad 
\operator = 
\begin{pmatrix}
-\Ptwo_{00} & 0\\
-\Ptwo_{10} & \Pone_{11} -\Ptwo_{11}
\end{pmatrix}.
\eeq

The main result of this section is the following regularity shift for $\Rs\Pi_1^*\iota$.

\ble[Regularity of $\Rs\Pi_1^*\iota$]\label{lem:regRs}
If $\Rs$ exists and Assumption \ref{ass:2} holds, then there exists $C>0$ such that 
\beqs
\N{ \Rs \Pi_1^* \iota}_{\cZ^{j-2}\to \cZ^j} \leq C \big(1 + \N{\Rs \Pi_1^*}_{\Hilbertzero^* \to \Hilbertzero }\big)\quad\tfor j=2,\ldots,\newell+1.
\eeqs
\ele

To prove Lemma \ref{lem:regRs}, we first show that Part (iv) of Assumption \ref{ass:2} (i.e., \eqref{eq:CG1})
and the coercivity of $\Ptwo$ \eqref{eq:P2} imply the following result.

\ble\label{lem:Berlin1}
If  Assumption \ref{ass:2} holds then there exists $C>0$ such that, for $j=0,\ldots,m+1$, 
\beq\label{eq:tired1}
\N{\Pi_0 u}_{\cZ^j} \leq C\min\big\{ \N{ \iota^{-1}\Ptwo_{00} \Pi_0 u}_{\cZ^j}, \N{ \iota^{-1}\Ptwo_{00}^* \Pi_0 u}_{\cZ^j}\big\}
\quad \tfa u \in \Hilbertzero.
\eeq
\ele

\bpf
We first prove the bound in \eqref{eq:tired1} involving $\Ptwo_{00}$.
By \eqref{eq:Pi0matrix}, $\Pi_0^{\Hilbertzero} \Pi_0=\Pi_0$. Therefore, by \eqref{eq:CG1} with $\mathsf{E}= \iota^{-1}\Ptwo$, 
\begin{align}\nonumber
\N{\Pi_0 u }_{\cZ^j} = \big\|\Pi_0^{\Hilbertzero} \Pi_0 u \big\|_{\cZ^j} &\leq C \Big( \big\|\Pi_0^{\Hilbertzero} (\iota^{-1}\Ptwo) \Pi_0^{\Hilbertzero} \Pi_0 u \big\|_{\cZ^j} + \big\| \Pi_0^{\Hilbertzero} \Pi_0 u\big\|_{\Hilbertzero}\Big) \\
&= C \Big( \big\|\Pi_0^{\Hilbertzero} (\iota^{-1}\Ptwo) \Pi_0 u \big\|_{\cZ^j} + \big\| \Pi_0 u\big\|_{\Hilbertzero}\Big).
\label{eq:CG4}
\end{align}
Now, by \eqref{eq:Pi0matrix} and \eqref{eq:iota_matrix}, $ \Pi_0^{\Hilbertzero} \iota^{-1} \Pi_0^* =  \Pi_0^{\Hilbertzero}\iota^{-1}$. By this and the definition $\Ptwo_{00}:= \Pi_0^* \Ptwo \Pi_0$ \eqref{eq:matrixnotation}, 
\beqs
 \Pi_0^{\Hilbertzero} \iota^{-1} \Ptwo_{00} \Pi_0 u =  \Pi_0^{\Hilbertzero}\iota^{-1} \Pi_0^* \Ptwo\Pi_0u =  
 \Pi_0^{\Hilbertzero} (\iota^{-1}\Ptwo) \Pi_0 u,
\eeqs
By the last displayed equation, \eqref{eq:CG4}, and \eqref{eq:proj_reg}, 
\beq\label{eq:CG5}
\N{\Pi_0 u }_{\cZ^j} \leq C\Big( \big\|\iota^{-1}\Ptwo_{00} \Pi_0 u\big\|_{\cZ^j} +  \big\| \Pi_0 u\big\|_{\Hilbertzero}\Big).
\eeq
To remove the second term on the right-hand side of \eqref{eq:CG5} and obtain the bound in \eqref{eq:tired1}  involving $\Ptwo_{00}$, we use 
\eqref{eq:P2} to obtain that
\beq\label{eq:CG6}
C_{\Ptwo} \N{\Pi_0 u}^2_{\Hilbertzero} \leq 
\big| \big\langle \Ptwo_{00} \Pi_0 u ,\Pi_0 u \big\rangle_{\Hilbertzero^*\times\Hilbertzero}\big|
\leq \big\|\iota^{-1}\Ptwo_{00} \Pi_0 u\big\|_{\Hilbertzero} \N{ \Pi_0 u}_{\Hilbertzero};
\eeq
the result then follows by combining \eqref{eq:CG5} and \eqref{eq:CG6}. The bound in \eqref{eq:tired1} involving $\Ptwo_{00}^*$ follows in an analogous way, now using \eqref{eq:CG1} with $\mathsf{E} = \iota^{-1}\Ptwo^*$. 
\epf

\bpf[Proof of Lemma \ref{lem:regRs}]
Let $\newf\in \cZ^{j-2}\subset \Hilbertzero$ and let $u= \Rs \Pi_1^* \iota \newf\in \Hilbert$ so that $\operator^* u = \Pi_1^*\iota \newf\in\Hilbert^*$.
The idea of the proof is to use \eqref{eq:er1} to obtain a regularity-shift-like bound on $\Pi_1 u$ (\eqref{eq:birthday2} below) and then show that $\Pi_0 u$ inherits the regularity of $\Pi_1 u$ via  the equation $\Pi_0^*\operator^* u =0$ and \eqref{eq:tired1}
 (see \eqref{eq:birthday4} and \eqref{eq:birthday2a} below).
 
In preparation for applying \eqref{eq:er1} with $\mathsf{D}= \Re\Pone^*$, we observe that,
by \eqref{eq:key1} and \eqref{eq:Riesz}, for $v\in \Hilbert\subset\Hilbertzero$, 
\begin{align}\nonumber
&\big| \big\langle \Pone ^* \Pi_1 u ,\Pi_1 v\big\rangle_{\Hilbert^*\times \Hilbert}\big| 
=\big| \big\langle \Pone ^*  u , v\big\rangle_{\Hilbert^*\times \Hilbert}\big| 
= \big| \big\langle \Pi_1^*\iota  \newf + \Ptwo ^* u, v\big\rangle_{\Hilbertzero^*\times \Hilbertzero}\big|\\ \nonumber
&\qquad\leq\big| \big\langle   \newf , \iota\Pi_1 v\big\rangle_{\Hilbertzero\times \Hilbertzero^*}\big|
+ \big| \big\langle  u, \Ptwo v\big\rangle_{\Hilbertzero\times \Hilbertzero^*}\big|
\\ \nonumber
&\qquad\leq  \Big(\N{\newf}_{\cZ^{j-2}} \N{\iota \Pi_1 \iota^{-1}}_{(\cZ^{j-2})^*\to (\cZ^{j-2})^*} +
 \N{\Ptwo\iota^{-1}}_{(\cZ^{j-2})^*\to (\cZ^{j-2})^*}  \N{u}_{\cZ^{j-2}}
\Big)\N{ \iota v}_{(\cZ^{j-2})^*}.
\end{align}
Therefore, by \eqref{eq:sun1} and \eqref{eq:P2reg},
\begin{align*}
\big| \big\langle \Pone ^* \Pi_1 u ,\Pi_1 v\big\rangle_{\Hilbert^*\times \Hilbert}\big| \leq C \Big(
\N{\newf}_{\cZ^{j-2}} + \N{u}_{\cZ^{j-2}}
\Big) 
\N{\iota v}_{(\cZ^{j-2})^*},
\end{align*}
so that, by \eqref{eq:er1} with $\mathsf{D}= \Pone ^*$,
\begin{align}
\N{\Pi_1 u }_{\cZ^j} &\leq C \Big( \N{\Pi_1 u}_{\Hilbertzero } +
 \N{\newf}_{\cZ^{j-2}} + \N{u}_{\cZ^{j-2}}\Big)
 \leq C' \Big( 
 \N{\newf}_{\cZ^{j-2}} + \N{u}_{\cZ^{j-2}}\Big). 
\label{eq:birthday2}
\end{align}
Now, from the matrix form of $\operator$ \eqref{eq:matrices} and the equation $\operator^* u = \Pi_1^*\iota \newf$,
\beq\label{eq:birthday4}
\Ptwo_{00}^* \Pi_0 u + \Ptwo_{10}^* \Pi_1 u= 0 \quad\tin \Pi_0^*\Hilbert^*. 
\eeq
By the combination of \eqref{eq:tired1}, \eqref{eq:birthday4}, the definition $\Ptwo_{10}^* := (\Pi_1^* \Ptwo \Pi_0)^* = \Pi_0^* \Ptwo^*\Pi_1$ (from \eqref{eq:matrixnotation}), \eqref{eq:sun1}, and \eqref{eq:P2reg} with $\mathsf{E}= \iota^{-1}\Ptwo^*$, for $j=2,\ldots, \newell+1$, 
\begin{align}\nonumber
\N{\Pi_0 u}_{\cZ^j}& \leq C\N{ \iota^{-1}\Ptwo_{00}^* \Pi_0 u}_{\cZ^j}
\\ \nonumber
&
=C \N{ \iota^{-1}\Ptwo_{10}^* \Pi_1 u}_{\cZ^j}\\
&\leq C\big\| \iota^{-1} \Pi_0^* \Ptwo^* \Pi_1 u\big\|_{\cZ^j}
= C\big\| \iota^{-1} \Pi_0^* \iota (\iota^{-1}\Ptwo^*) \Pi_1 u\big\|_{\cZ^j} 
\leq C' \big\| \Pi_1 u\big\|_{\cZ^j}. 
\label{eq:birthday2a}
\end{align}
Combining this with \eqref{eq:birthday2} we obtain that 
\beq\label{eq:birthday3}
\N{u}_{\cZ^j}\leq 
 C \Big(  \N{\newf}_{\cZ^{j-2}} + \N{u}_{\cZ^{j-2}}\Big).
\eeq
When $j=2$, \eqref{eq:birthday3} implies that
\beq\label{eq:birthday3a}
\N{u}_{\cZ^2} \leq C\Big(1+\N{\Rs \Pi_1^*}_{\Hilbertzero^* \to \Hilbertzero }\Big)\N{\newf}_{\Hilbertzero }.
\eeq
The result then follows by the combination of \eqref{eq:birthday3}, \eqref{eq:birthday3a}, and induction.
\epf

We now also prove Lemma \ref{lem:oscil}, since its proof is similar to that of Lemma 
\ref{lem:regRs}.

\bpf[Proof of Lemma \ref{lem:oscil}]
From the matrix form of $\operator$ \eqref{eq:matrices}, $-\Ptwo_{00}\Pi_0 u =\Pi_0^*f$.
By \eqref{eq:tired1}, 
 \eqref{eq:sun1},  and the bound on $f$ in \eqref{eq:oscil},
\begin{align}\nonumber
\N{\Pi_0 u}_{\cZ^{\newell+1}} \leq C \big\|\iota^{-1}\Pi_0^* f\big\|_{\cZ^{\newell+1}}\leq C \big\|\iota^{-1} f\big\|_{\cZ^{\newell+1}}
&\leq  C \Cosc \N{f}_{\Hilbert^*}\\ \nonumber
&= C\Cosc \sup_{v\in \Hilbert}\frac{\big| \langle \operator u, v\rangle_{\Hilbert^*\times \Hilbert}\big|}{\N{v}_{\Hilbert}} \\ 
&\leq C \Cosc C'\N{u}_{\Hilbert}.\label{eq:coffee2}
\end{align}
We now argue as in the proof of Lemma \ref{lem:regRs} -- but now with $\operator^*$ replaced by $\operator$
and \eqref{eq:er1} applied with $\mathsf{D}= \Pone$ -- to obtain that \eqref{eq:birthday2} holds for $j=2,\ldots,\newell+1$ 
and $g= \iota^{-1}f$ (recall that in Lemma \ref{lem:regRs} we started with $g\in \cZ^{j-2}\subset \Hilbertzero$, and here we started with $f\in \Hilbert^*$).
When $j=2$, this bound implies that
\beq\label{eq:coffee3}
\N{\Pi_1 u}_{\cZ^2} \leq C \Big( \big\|\iota^{-1} f \big\|_{\Hilbertzero} + \N{u}_{\Hilbertzero}\Big) 
\leq C \Big( \big\|\iota^{-1} f \big\|_{\cZ^{\newell+1}} + \N{u}_{\Hilbert}\Big) 
\leq C' \N{u}_{\Hilbert},
\eeq
where in the last inequality we have argued as in \eqref{eq:coffee2}. 
The combination of \eqref{eq:coffee2} and \eqref{eq:coffee3} implies that $\| u\|_{\cZ^2} \leq C \|u \|_{\Hilbert}$. 
The result then follows from iterating the argument involving \eqref{eq:birthday2} for increasing $j$, up to $\newell+1$.

For the second assertion, now $-\Ptwo_{00}\Pi_0 u = \widetilde\Pi_0^*f$.
By \eqref{eq:tired1} and the assumption \eqref{eq:tired2}, 
\beq\label{eq:coffee4}
\N{\Pi_0 u}_{\cZ^{\newell+1}} \leq C \big\|\iota^{-1}\widetilde\Pi_0^* f\big\|_{\cZ^{\newell+1}}\leq C \N{ \iota^{-1} f}_{\cZ^{\newell-1}}.
\eeq
Therefore, since $\Pi_1 u$ gains two derivatives over $f$ via \eqref{eq:birthday2}, 
\begin{align*}
\N{u}_{\cZ^{\newell+1}} \leq \N{\Pi_0 u }_{\cZ^{\newell+1}} + \N{\Pi_1 u }_{\cZ^{\newell+1}} \leq C \Big(\big\|\iota^{-1} f\big\|_{\cZ^{\newell-1}} + \N{u}_{\cZ^{\newell-1}}\Big).
\end{align*}
Repeatedly applying \eqref{eq:birthday2} and using \eqref{eq:coffee4} (similar to in the first part of the proof) then gives that 
\begin{align*}
\N{u}_{\cZ^{\newell+1}} \leq C \Big(\big\|\iota^{-1} f\big\|_{\cZ^{\newell-1}} + \N{u}_{\Hilbert}\Big);
\end{align*}
the result then follows in an analogous way to \eqref{eq:coffee2}. 
\epf

\section{Bounding $\|\Pi_0(u-u_h)\|_{\Hilbert }$ using $\gamma_{\rm dv}(\operator)$}\label{sec:gammadv}

To prove the bound \eqref{eq:qo_abs} in Theorem \ref{thm:abs1}, we claim that it is sufficient to prove this bound under the assumption that $u_h$ exists.
To justify this claim, observe that 
since $u_h$ is the solution of a finite-dimensional linear system, 
the statements ``under the assumption that $u_h$ exists, $u_h$ is unique" 
and ``$u_h$ exists and is unique" are equivalent. 
Once the bound \eqref{eq:qo_abs} is established under the assumption that $u_h$ exists, setting $f=0$ and using that then (by one of the assumptions in Theorem \ref{thm:abs1}) $u=0$, we find that $u_h=0$; i.e., under the assumption that $u_h$ exists, $u_h$ is unique, and hence $u_h$ exists and is unique by the above equivalence.
From now on, therefore, we assume that $u_h$ exists and seek to prove \eqref{eq:qo_abs}.

The main result of this section is the following.

\ble\label{lem:Pi0error}
Given $\operator$ satisfying Assumption \ref{ass:1}, define $\Pi_0$ by \eqref{eq:Pi0}, $\gamma_{\rm dv}(\operator)$ by \eqref{eq:gamma_dv}. Given $u\in \Hilbert$, assume that the solution 
$u_h\in \Hilbert_h$ of \eqref{eq:Gog} exists.
Then 
\beq\label{eq:Pi0error}
\N{\Pi_0(u-u_h)}_{\Hilbert } \leq C \Big( 
\N{(I-\Pi_h)u }_{\Hilbert } + \gamma_{\rm dv}(\operator) \N{u-u_h}_{\Hilbert }
\Big).
\eeq
\ele

To prove Lemma \ref{lem:Pi0error}, we introduce the sesquilinear form
\beq\label{eq:b+}
b^+(u,v) := \langle \Pone  u,v\rangle_{\Hilbert ^*\times \Hilbert } + (C_{\Ptwo })^{-1} \langle\Ptwo  u,v\rangle_{\Hilbertzero^*\times\Hilbertzero}.
\eeq

\begin{lemma}\label{lem:b+}
$b^+$ is continuous and coercive on $\Hilbert $.
\end{lemma}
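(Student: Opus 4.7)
The plan is straightforward: both continuity and coercivity follow from reading off the bounds built into Assumption~\ref{ass:1} together with the identities \eqref{eq:Garding} and \eqref{eq:P2}. The only minor preliminary is to observe that $\Hilbert \hookrightarrow \Hilbertzero$ continuously, which is automatic from the definitions of the norms: since $\N{v}^2_\Hilbert = \Re\langle \Pone v,v\rangle_{\Hilbert^*\times\Hilbert} + C_2 \vertiii{v}^2_\Hilbertzero$ and the first term is nonnegative once the G\aa rding inequality is rewritten as \eqref{eq:Garding}, one has $\N{v}_\Hilbertzero \leq \N{v}_\Hilbert$ for every $v \in \Hilbert$.

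For continuity, I would split $b^+$ into its two pieces. The first piece uses boundedness of $\Pone$ as an operator $\Hilbert \to \Hilbert^*$, which I get by writing $\Pone = \operator + \Ptwo$: the operator $\operator$ is bounded $\Hilbert \to \Hilbert^*$ by hypothesis, and $\Ptwo$, being bounded $\Hilbertzero \to \Hilbertzero^*$, is bounded $\Hilbert \to \Hilbert^*$ through the continuous embedding just noted (and its dual). The second piece is bounded directly using $\N{\Ptwo}_{\Hilbertzero \to \Hilbertzero^*}$ and then $\N{v}_\Hilbertzero \leq \N{v}_\Hilbert$.

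For coercivity, I would simply take $u=v\in\Hilbert$ and compute the real part. Using \eqref{eq:Garding} on the first term and \eqref{eq:P2} on the second,
\begin{equation*}
\Re b^+(v,v)
= \Re\big\langle \Pone v, v\big\rangle_{\Hilbert^*\times\Hilbert} + (C_{\Ptwo})^{-1}\Re\big\langle \Ptwo v, v\big\rangle_{\Hilbertzero^*\times\Hilbertzero}
\geq \big(\N{v}^2_\Hilbert - \N{v}^2_\Hilbertzero\big) + \N{v}^2_\Hilbertzero
= \N{v}^2_\Hilbert,
\end{equation*}
so coercivity holds with constant $1$. There is no real obstacle here; the point of introducing $b^+$ is precisely that $(C_\Ptwo)^{-1}\Ptwo$ is the exact Hermitian shift needed to cancel the negative $\Hilbertzero$-term produced by the G\aa rding inequality on $\Pone$.
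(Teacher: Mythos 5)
Your coercivity computation is exactly the paper's: apply \eqref{eq:Garding} as an equality to the $\Pone$-term and \eqref{eq:P2} as an inequality to the $\Ptwo$-term, yielding $\Re b^+(v,v)\geq\N{v}^2_\Hilbert$ with constant $1$. (You even correct a typo in the paper's displayed line, which writes ``$=$'' where it uses the inequality \eqref{eq:P2}.) For continuity the paper simply says ``immediate,'' and your bookkeeping via $\Pone=\operator+\Ptwo$ and the embedding $\Hilbert\hookrightarrow\Hilbertzero$ is the natural way to make that explicit.

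One step, however, is circular as written. You assert that $\N{v}_\Hilbertzero\leq\N{v}_\Hilbert$ because ``the first term is nonnegative once the G\aa rding inequality is rewritten as \eqref{eq:Garding}.'' But \eqref{eq:Garding}, namely $\Re\langle\Pone v,v\rangle=\N{v}^2_\Hilbert-\N{v}^2_\Hilbertzero$, is just the definitional rearrangement of $\N{\cdot}_\Hilbert$ and carries no sign information: asserting its right-hand side is $\geq 0$ is \emph{exactly} the claim $\N{v}_\Hilbertzero\leq\N{v}_\Hilbert$ you are trying to establish. Nothing in Assumption~\ref{ass:1} forces $\Re\langle\Pone v,v\rangle\geq 0$. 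What the assumptions do give is $\N{v}^2_\Hilbert\geq C_1\vertiii{v}^2_\Hilbert$; if one additionally takes ``$\Hilbert\subset\Hilbertzero$, $\Hilbert$ dense in $\Hilbertzero$'' in the usual sense of a continuous embedding $\vertiii{v}_\Hilbertzero\lesssim\vertiii{v}_\Hilbert$, then $\N{v}_\Hilbertzero\lesssim\N{v}_\Hilbert$ with some constant (not necessarily $1$), which is all that continuity of $b^+$ needs. (In the concrete Maxwell case, $\Re\langle\Pone v,v\rangle=\wn^{-2}\Re(\mu^{-1}\curl v,\curl v)\geq 0$ directly, so the inequality with constant $1$ does hold there — but not by the reasoning you gave.)
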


\bpf
Continuity is immediate. For coercivity, by \eqref{eq:P2} and \eqref{eq:Garding},
\beqs
\Re b^+(v,v) = \Re\langle \Pone v,v\rangle + (C_{\Ptwo })^{-1} \Re\langle \Ptwo  u,v\rangle_{\Hilbertzero^*\times\Hilbertzero}. = \N{v}^2_{\Hilbert } - \N{v}_{\Hilbertzero }^2 + \N{v}_{\Hilbertzero }^2 = \N{v}^2_{\Hilbert }.
\eeqs
\epf

\begin{corollary}[Definition and boundedness of $\Pi^+_h$]\label{cor:b+}
Given $u\in \Hilbert$, define $\Pi_h^+ u \in \Hilbert_h$ as the solution of 
\beqs
b^+\big( \Pi_h^+u, v_h\big) =  
b^+\big( u, v_h\big) 
\quad\tfa v_h\in \Hilbert _h;
\eeqs
i.e., 
\beq\label{eq:Gog+}
b^+\big( (I-\Pi_h^+)u, v_h\big) = 0 \quad\tfa v_h\in \Hilbert _h
\eeq
Then $\Pi_h^+ :\Hilbert \to \Hilbert _h$ is well-defined, bounded, satisfies $\Pi_h^+ w_h =w_h$ for all $w_h \in \Hilbert_h$, and 
satisfies
\beq\label{eq:Cea1}
\N{I-\Pi_h^+}_{\Hilbert \to \Hilbert } \leq C \N{I-\Pi_h}_{\Hilbert \to \Hilbert }.
\eeq
\end{corollary}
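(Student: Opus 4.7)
The plan is a textbook Lax--Milgram plus C\'ea's-lemma argument, using that $b^+$ is continuous and coercive on $\Hilbert$ (Lemma \ref{lem:b+}) and that $\Hilbert_h\subset \Hilbert$ is closed, hence a Hilbert space with the inherited inner product. First, I would observe that continuity of $b^+$ on $\Hilbert$ gives a constant $C_{\rm c}>0$ with $|b^+(u,v)|\le C_{\rm c}\N{u}_\Hilbert\N{v}_\Hilbert$, and coercivity gives $\Re b^+(v,v)=\N{v}_\Hilbert^2$. Applying the Lax--Milgram theorem to the restriction of $b^+$ to $\Hilbert_h\times \Hilbert_h$ with anti-linear functional $v_h\mapsto b^+(u,v_h)$ (bounded since $|b^+(u,v_h)|\le C_{\rm c}\N{u}_\Hilbert\N{v_h}_\Hilbert$) produces a unique $\Pi_h^+u\in\Hilbert_h$ satisfying \eqref{eq:Gog+}, so $\Pi_h^+$ is well-defined. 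Since $\Pi_h^+u$ depends linearly on $u$ (the equation is linear in the data), $\Pi_h^+$ is linear.

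Next I would verify the projection property: if $w_h\in\Hilbert_h$, then $w_h$ itself satisfies $b^+(w_h-w_h,v_h)=0$ for all $v_h\in\Hilbert_h$, so by uniqueness $\Pi_h^+w_h=w_h$. For boundedness of $\Pi_h^+:\Hilbert\to\Hilbert$, I would test \eqref{eq:Gog+} with $v_h=\Pi_h^+u$ and use coercivity and continuity to get
\begin{equation*}
\N{\Pi_h^+u}_\Hilbert^2=\Re b^+(\Pi_h^+u,\Pi_h^+u)=\Re b^+(u,\Pi_h^+u)\le C_{\rm c}\N{u}_\Hilbert\N{\Pi_h^+u}_\Hilbert,
\end{equation*}
so $\N{\Pi_h^+}_{\Hilbert\to\Hilbert}\le C_{\rm c}$.

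For the quasi-optimality bound \eqref{eq:Cea1} I would run the standard C\'ea argument: for any $v_h\in\Hilbert_h$, $\Pi_h^+u-v_h\in\Hilbert_h$, so Galerkin orthogonality \eqref{eq:Gog+} gives $b^+((I-\Pi_h^+)u,\Pi_h^+u-v_h)=0$. Hence
\begin{equation*}
\N{(I-\Pi_h^+)u}_\Hilbert^2=\Re b^+((I-\Pi_h^+)u,(I-\Pi_h^+)u)=\Re b^+((I-\Pi_h^+)u,u-v_h)\le C_{\rm c}\N{(I-\Pi_h^+)u}_\Hilbert\N{u-v_h}_\Hilbert,
\end{equation*}
so $\N{(I-\Pi_h^+)u}_\Hilbert\le C_{\rm c}\N{u-v_h}_\Hilbert$ for every $v_h\in\Hilbert_h$. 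Taking the infimum over $v_h\in\Hilbert_h$ and using that $\Pi_h$ is the $\Hilbert$-orthogonal projection onto $\Hilbert_h$, so that $\min_{v_h\in\Hilbert_h}\N{u-v_h}_\Hilbert=\N{(I-\Pi_h)u}_\Hilbert$, yields \eqref{eq:Cea1} with $C=C_{\rm c}$.

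There is no real obstacle here: the coercivity constant of $b^+$ in the $\N{\cdot}_\Hilbert$ norm is exactly $1$, the continuity constant is tracked through Lemma \ref{lem:b+} (and depends only on $\N{\Pone}_{\Hilbert\to\Hilbert^*}$, $\N{\Ptwo}_{\Hilbertzero\to\Hilbertzero^*}$ and $C_{\Ptwo}$), and the only subtle point is to recall that $\Pi_h$ being the orthogonal projection converts the $\inf_{v_h}\N{u-v_h}_\Hilbert$ appearing in C\'ea's lemma into $\N{(I-\Pi_h)u}_\Hilbert$, which is precisely what is needed for \eqref{eq:Cea1}.
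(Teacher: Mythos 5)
Your proof is correct and follows exactly the same route as the paper: well-definedness and boundedness from Lax--Milgram applied to the continuous, coercive form $b^+$ of Lemma \ref{lem:b+}, the projection property $\Pi_h^+w_h=w_h$ by uniqueness, and \eqref{eq:Cea1} via C\'ea's lemma together with the fact that $\Pi_h$ realises the $\Hilbert$-best approximation. You simply spell out the standard arguments that the paper delegates to its references.
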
 

\bpf
The fact that $\Pi_h^+$ is well defined and bounded follows from Lemma \ref{lem:b+} combined with the Lax--Milgram lemma \cite{LaMi:54}, \cite[Lemma 2.32]{Mc:00}, and the bound \eqref{eq:Cea1} then follows from 
C\'ea's lemma \cite{Ce:64}, \cite[Theorem 13.1]{Ci:91}. The fact that $\Pi_h^+w_h=w_h$ for all $w_h\in\Hilbert_h$ follows from the facts that (i) $\Pi_h^+$ is well-defined, and (ii) if $w_h\in \Hilbert_h$, then 
$\Pi_h^+w_h=w_h$ is a solution of \eqref{eq:Gog+}.
\epf

By the definitions of $b^+$ \eqref{eq:b+} and $\Pi_h^+$ \eqref{eq:Gog+},
\beq\label{eq:Pi+P2}
\big\langle\Ptwo  (I-\Pi_h^+) u, v_h\big\rangle_{\Hilbertzero^*\times\Hilbertzero } =0 \quad\tfa v_h\in \Hilbert _h \cap \Ker \Pone.
\eeq
In the terminology of Remark \ref{rem:relation2}, 
a consequence of \eqref{eq:Pi+P2} is that if $u$ is discretely $\epsilon$-divergence free, then so is $\Pi_h^+ u$.

\bre[Link to the notation of \cite{ChEr:23} and \cite{BrPa:08}]
In the Maxwell setting of Lemma \ref{lem:Maxwell}, and when $\epsilon$ and $\mu$ are real, $\Pi_h^+$ is denoted by $\mathcal{B}^{\rm c}_{h0}$ in \cite[\S4.1]{ChEr:23};
the property \eqref{eq:Pi+P2} is then \cite[Equation 4.3b]{ChEr:23}. 
The operator defined by \eqref{eq:Gog} with the arguments of $b^+(\cdot,\cdot)$ swapped is denoted by $\hat{P}_h$ in \cite[Equation 3.12]{BrPa:08}.
\ere

\begin{lemma}
\label{lem:gamma_dv}
Given $\operator$ satisfying Assumption \ref{ass:1}, define $\Pi_0$ by \eqref{eq:Pi0}, $\gamma_{\rm dv}(\operator)$ by \eqref{eq:gamma_dv}, and $\Pi_h^+$ by \eqref{eq:Gog+}. 
If $w$ satisfies 
\beqs
 \big\langle\Ptwo  w, v_h \big\rangle_{\Hilbertzero^*\times\Hilbertzero}=0 \quad\tfa v_h \in \Hilbert _h \cap \Ker \Pone,
 \eeqs
then
\beqs
\N{\Pi_0 \Pi_h^+ w}_{\Hilbertzero } \leq C \gamma_{\rm dv}(\operator)\N{w}_{\Hilbert }.
\eeqs
\end{lemma}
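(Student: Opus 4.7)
The plan is to reduce this lemma to a one-line application of the definition of $\gamma_{\rm dv}(\operator)$. The whole point of the auxiliary projection $\Pi_h^+$ was to produce, from any element of $\Hilbert$, a finite-element function that lies in the class over which the supremum defining $\gamma_{\rm dv}(\operator)$ is taken; this is exactly what \eqref{eq:Pi+P2} does.

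First I would verify that $\Pi_h^+ w \in \Hilbert_h$ satisfies the discrete orthogonality condition appearing in the definition \eqref{eq:gamma_dv} of $\gamma_{\rm dv}(\operator)$. Writing $\Pi_h^+ w = w - (I-\Pi_h^+)w$ and testing against an arbitrary $v_h \in \Hilbert_h \cap \Ker \Pone$, the first piece vanishes by the hypothesis $\langle \Ptwo w, v_h\rangle = 0$, while the second piece vanishes by \eqref{eq:Pi+P2}. Therefore
\beqs
\big\langle \Ptwo \Pi_h^+ w, v_h\big\rangle_{\Hilbertzero^*\times\Hilbertzero} = 0 \quad\tfa v_h \in \Hilbert_h \cap \Ker \Pone,
\eeqs
so that $\Pi_h^+ w$ belongs to the admissible set in \eqref{eq:gamma_dv}, and consequently
\beqs
\N{\Pi_0 \Pi_h^+ w}_{\Hilbertzero} \leq \gamma_{\rm dv}(\operator) \N{\Pi_h^+ w}_{\Hilbert}.
\eeqs

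To finish, I would show that $\Pi_h^+$ is uniformly bounded $\Hilbert\to\Hilbert$, which converts the right-hand side above into $C\gamma_{\rm dv}(\operator)\N{w}_{\Hilbert}$. This is a standard one-line estimate for the Galerkin projection associated with a continuous and coercive form: by the coercivity in Lemma \ref{lem:b+},
\beqs
\N{\Pi_h^+ w}^2_{\Hilbert} \leq \Re b^+(\Pi_h^+ w, \Pi_h^+ w) = \Re b^+(w, \Pi_h^+ w) \leq C\N{w}_{\Hilbert}\N{\Pi_h^+ w}_{\Hilbert},
\eeqs
where the middle equality uses the definition \eqref{eq:Gog+} of $\Pi_h^+$ with $v_h = \Pi_h^+ w \in \Hilbert_h$, and the final inequality uses the continuity of $b^+$ (immediate from Assumption \ref{ass:1}). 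Cancelling one factor of $\N{\Pi_h^+ w}_{\Hilbert}$ and combining with the previous display completes the proof.

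I do not anticipate any obstacle: the lemma is essentially built into the construction of $\Pi_h^+$, and all the work lies in the two properties \eqref{eq:Pi+P2} and Lemma \ref{lem:b+}, both already established. The only bookkeeping is that the constant $C$ absorbs the ratio of the continuity and coercivity constants of $b^+$, both of which are explicit and $\wn$-independent.
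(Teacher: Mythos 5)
Your proof is correct and follows essentially the same route as the paper: show that $\Pi_h^+w$ lies in the admissible class of the supremum \eqref{eq:gamma_dv} via \eqref{eq:Pi+P2}, apply the definition of $\gamma_{\rm dv}(\operator)$, and conclude with the boundedness of $\Pi_h^+$. The only cosmetic difference is that the paper cites Corollary \ref{cor:b+} for the bound $\N{\Pi_h^+w}_{\Hilbert}\leq C\N{w}_{\Hilbert}$, whereas you rederive it directly from coercivity and continuity of $b^+$.
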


\bpf
By \eqref{eq:Pi+P2},
\beqs
\big\langle \Ptwo  \Pi^+_h w, v_h \big\rangle_{\Hilbertzero^*\times\Hilbertzero}=0 \quad\tfa v_h \in \Hilbert _h \cap \Ker \Pone .
\eeqs
Therefore, 
by the definition of $\gamma_{\rm dv}$ \eqref{eq:gamma_dv} and Corollary \ref{cor:b+},
\beqs
\N{\Pi_0 \Pi_h^+ w}_{\Hilbertzero } 
 \leq\gamma_{\rm dv}(\operator)\N{\Pi_h^+ w}_{\Hilbert }\leq C \gamma_{\rm dv}(\operator)\N{w}_{\Hilbert }.
\eeqs
\epf

\bpf[Proof of Lemma \ref{lem:Pi0error}]
Since $\Pi_h^+ u_h=u_h$ (by Corollary \ref{cor:b+}), $\Pi_0:\Hilbertzero \to \Hilbertzero $ is bounded, and $I-\Pi_h^+$ satisfies \eqref{eq:Cea1},
\begin{align*}
\N{\Pi_0(u-u_h)}^2_{\Hilbertzero }
&= \Big( \Pi_0(u-u_h) , \Pi_0\Big( (I-\Pi_h^+)u + \Pi_h^+ (u-u_h)\Big)\Big)_{\Hilbertzero }\\
&\leq \N{\Pi_0(u-u_h)}_{\Hilbertzero } \Big( C \N{(I-\Pi_h)u}_{\Hilbert} + \N{\Pi_0 \Pi_h^+ (u-u_h) }_{\Hilbertzero }\Big).
\end{align*}
By \eqref{eq:Gog2} and Lemma \ref{lem:gamma_dv},
\beqs
\N{\Pi_0 \Pi_h^+ (u-u_h)}_{\Hilbertzero } \leq C \gamma_{\rm dv}(\operator) \N{u-u_h}_{\Hilbert },
\eeqs
and the result then follows since $\N{\Pi_0(u-u_h)}_{\Hilbert }=\N{\Pi_0(u-u_h)}_{\Hilbertzero }$ by \eqref{eq:simple_norm2}.
\epf

\section{Asymptotic quasi-optimality}

As mentioned in \S\ref{sec:context}, the following result, Lemma \ref{lem:basic}, is morally equivalent to that obtained by the classic duality argument introduced in \cite{Mo:92,Mo:03a} (see also \cite[Theorem 4.6]{ChEr:23} for a recent variant of this result, which uses notation similar to that below). 
The main abstract result of this paper (in the form of Lemma \ref{thm:abs2} below) provides a stronger result than Lemma \ref{lem:basic}, 
but the proof of Lemma \ref{thm:abs2} uses Lemma \ref{lem:basic} applied to an auxiliary operator, $\Phash$; see Lemma \ref{lem:Pihash} below.

\ble[Asymptotic quasi-optimality]\label{lem:basic}
If $\operator$ satisfies Assumption \ref{ass:1} and $(P^*)^{-1}$ exists, 
then there exist $C_1,C_2,C_3>0$ such that if 
\beq\label{eq:asymptotic1}
\gamma_{\rm dv}(\operator) \leq C_1 \quad\tand\quad 
\N{(I-\Pi_h)\Rs\Pi_1^*}_{\Hilbertzero^* \to \Hilbert }\leq C_2 ,
\eeq
then $u_h$ exists, is unique, and satisfies
\beqs
\N{u-u_h}_{\Hilbert } \leq 
C_3
\N{(I-\Pi_h)u}_{\Hilbert }.
\eeqs
\ele

 Lemma \ref{lem:basic} combined with Lemma \ref{lem:regRs} with $\newell=1$ gives the following corollary.  

\begin{corollary}[Asymptotic quasi-optimality under low regularity]\label{cor:basic}
If $\operator$ satisfies Assumptions \ref{ass:1} and \ref{ass:2}, the latter with $\newell=1$,  and $(P^*)^{-1}$ exists, then there exist $C_1,C_2,C_3>0$ such that if 
\beqs
\gamma_{\rm dv}(\operator) \leq C_1 \quad\tand\quad 
\N{I-\Pi_h}_{\cZ^{2}\to \Hilbert } \big( 1 + \N{\Rs\Pi_1^*}_{\Hilbertzero^* \to \Hilbertzero }\big)\leq C_2 ,
\eeqs
then $u_h$ exists, is unique, and satisfies
\beqs
\N{u-u_h}_{\Hilbert } \leq 
C_3
\N{(I-\Pi_h)u}_{\Hilbert }.
\eeqs
\end{corollary}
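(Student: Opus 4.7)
The plan is to derive Corollary \ref{cor:basic} by combining Lemma \ref{lem:basic} (which gives asymptotic quasi-optimality in terms of the operator norm $\N{(I-\Pi_h)\Rs\Pi_1^*}_{\Hilbertzero^* \to \Hilbert}$) with the regularity shift provided by Lemma \ref{lem:regRs}. The only nontrivial input is converting between two different hypotheses on the smoothing properties of $\Rs\Pi_1^*$, namely the raw operator-norm condition in Lemma \ref{lem:basic} versus the best-approximation condition $\N{I-\Pi_h}_{\cZ^2\to \Hilbert}(1+\N{\Rs\Pi_1^*}_{\Hilbertzero^*\to\Hilbertzero})$ appearing in the corollary.

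First, I would apply Lemma \ref{lem:regRs} with $\newell=1$ and $j=2$. Since $\cZ^{j-2}=\cZ^0=\Hilbertzero$, this yields
\[
\N{\Rs\Pi_1^*\iota}_{\Hilbertzero \to \cZ^2} \leq C\bigl(1 + \N{\Rs\Pi_1^*}_{\Hilbertzero^* \to \Hilbertzero}\bigr).
\]
Next, using that $\iota:\Hilbertzero\to\Hilbertzero^*$ is an isometric isomorphism, I would factor $\Rs\Pi_1^* = (\Rs\Pi_1^*\iota)\iota^{-1}$ and estimate
\[
\N{(I-\Pi_h)\Rs\Pi_1^*}_{\Hilbertzero^*\to\Hilbert}
\leq \N{I-\Pi_h}_{\cZ^2 \to \Hilbert} \, \N{\Rs\Pi_1^*\iota}_{\Hilbertzero\to\cZ^2} \, \N{\iota^{-1}}_{\Hilbertzero^*\to\Hilbertzero}.
\]
Combining the two previous displays gives
\[
\N{(I-\Pi_h)\Rs\Pi_1^*}_{\Hilbertzero^*\to\Hilbert} \leq C\,\N{I-\Pi_h}_{\cZ^2 \to \Hilbert}\bigl(1 + \N{\Rs\Pi_1^*}_{\Hilbertzero^* \to \Hilbertzero}\bigr),
\]
so by choosing $C_2$ in the statement of the corollary sufficiently small (relative to the constant $C_2$ appearing in Lemma \ref{lem:basic}, divided by $C$), the second smallness assumption of Lemma \ref{lem:basic} is satisfied.

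Finally, the first smallness condition $\gamma_{\rm dv}(\operator)\leq C_1$ is identical in both statements, so with these choices Lemma \ref{lem:basic} applies directly and yields the existence, uniqueness, and quasi-optimality claim of the corollary, with $C_3$ inherited from Lemma \ref{lem:basic}.

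I do not expect any serious obstacle: the proof is essentially a book-keeping exercise that amounts to checking the hypotheses of Lemma \ref{lem:regRs} (which hold because Assumption \ref{ass:2} is assumed with $\newell=1$ and $\Rs$ exists by assumption) and tracking constants. The mildest subtlety is ensuring the $\iota^{-1}$ factor is handled correctly; this is harmless since $\iota$ is a Riesz isomorphism and its inverse is bounded with constant depending only on the norm on $\Hilbertzero$, which is among the constants the final $C_1,C_2,C_3$ are permitted to depend on.
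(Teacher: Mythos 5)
Your proposal is correct and matches the paper's intent exactly: the paper offers no separate proof of Corollary \ref{cor:basic} but simply asserts ``Lemma \ref{lem:basic} combined with Lemma \ref{lem:regRs} with $\newell=1$ gives the following corollary,'' and your argument is precisely the composition one would write out, factoring through $\iota$ (an isometry, so the $\N{\iota^{-1}}_{\Hilbertzero^*\to\Hilbertzero}$ factor equals $1$) and using $\cZ^0=\Hilbertzero$. No gaps.
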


Lemma \ref{lem:basic} is an immediate consequence of the following two results. 

\ble[Quasi-optimality of the Galerkin solution, modulo $\|\Pi_1(u-u_h)\|_{\Hilbert}$]\label{lem:kernel_basic}
Suppose that $\operator$ satisfies Assumption \ref{ass:1}. Given $u\in \Hilbert$, assume that the solution 
$u_h\in \Hilbert_h$ of \eqref{eq:Gog} exists. Then  there exists $C_1, C_2>0$ such that 
\beq\label{eq:Garding_error_basic}
\big(1 - C_1 \gamma_{\rm dv}(\operator)\big) \N{ u-u_h}_{\Hilbert } \leq C_2 \Big(\N{(I-\Pi_h)u}_{\Hilbert }+ \N{\Pi_1(u-u_h)}_{\Hilbertzero }\Big).
\eeq
\ele

\ble[Aubin-Nitsche-type argument analogous to \eqref{eq:AN}] \label{lem:dualityL2_basic}
Suppose that $\operator$ satisfies Assumption \ref{ass:1}. 
Given $u\in \Hilbert$, assume that the solution 
$u_h\in \Hilbert_h$ of \eqref{eq:Gog} exists. Then  there exists $C>0$ such that 
\begin{align*}
\N{\Pi_1(u-u_h)}_{\Hilbertzero }
\leq 
C\big\|(I-\Pi_h) \Rs \Pi_1^*\|_{\Hilbertzero^* \to \Hilbert } 
\N{u-u_h}_{\Hilbert }.
\end{align*}
\ele

The proof of Lemma \ref{lem:dualityL2_basic} is short, and so we give it first.

\bpf[Proof of Lemma \ref{lem:dualityL2_basic}]
By the definition of $\iota$ \eqref{eq:Riesz}, the definition of $\Rs:\Hilbert^*\to\Hilbert$, Galerkin orthogonality \eqref{eq:Gog}, and boundedness of $\operator:\Hilbert\to \Hilbert^*$, 
\begin{align*}
\N{\Pi_1(u-u_h)}^2_{\Hilbertzero } &= 
\big\langle \Pi_1(u-u_h), \iota \Pi_1 (u-u_h)\big\rangle_{\Hilbertzero\times \Hilbertzero^*},\\
&=\big\langle u-u_h, \Pi_1^* \iota\Pi_1 (u-u_h)\big\rangle_{\Hilbertzero\times \Hilbertzero^*},\\
&= \big\langle \operator(u-u_h) , \Rs \Pi_1^* \iota\Pi_1 (u-u_h)\big\rangle_{\Hilbert^*\times\Hilbert},\\
&= \big\langle \operator(u-u_h) , (I-\Pi_h)\Rs \Pi_1^* \iota\Pi_1 (u-u_h)\big\rangle_{\Hilbert^*\times\Hilbert},\\
&\leq  C
 \N{u-u_h}_\Hilbert  \big\|(I-\Pi_h)\Rs \Pi_1^*\big\|_{\Hilbertzero^* \to \Hilbert }\N{ \Pi_1 (u-u_h)}_{\Hilbertzero },
\end{align*}
and the result follows.
\epf

\bpf[Proof of Lemma \ref{lem:kernel_basic}]
By the triangle inequality, \eqref{eq:simple_norm2}, and \eqref{eq:Pi0error}, 
\begin{align*}
\N{u-u_h}_\Hilbert  &\leq  \N{\Pi_0(u-u_h)}_{\Hilbertzero }+\N{\Pi_1(u-u_h)}_\Hilbert \\
&\leq  C \Big( \N{(I-\Pi_h)u}_\Hilbert  + \gamma_{\rm dv}(\operator) \N{u-u_h}_{\Hilbert }\Big)+\N{\Pi_1(u-u_h)}_\Hilbert ;
\end{align*}
i.e., 
\beq\label{eq:coffee5}
\big(1 - C \gamma_{\rm dv}(\operator)\big) \N{ u-u_h}_{\Hilbert } \leq  C \N{(I-\Pi_h)u}_\Hilbert +\N{\Pi_1(u-u_h)}_\Hilbert .
\eeq
We claim that it is now sufficient to prove that, for all $\varepsilon>0$, 
\beq\label{eq:STP1_basic}
\N{\Pi_1(u-u_h)}_\Hilbert \leq \varepsilon \N{u-u_h}_\Hilbert  +C \varepsilon^{-1} \Big(
\N{(I-\Pi_h)u}_\Hilbert + \N{\Pi_1(u-u_h)}_{\Hilbertzero }
+\N{\Pi_0(u-u_h)}_{\Hilbert }
\Big).
\eeq
Indeed, inputting \eqref{eq:STP1_basic} into \eqref{eq:coffee5} and using again \eqref{eq:Pi0error}, we find \eqref{eq:Garding_error_basic}.

We now prove \eqref{eq:STP1_basic}. 
By the G\aa rding inequality \eqref{eq:Garding2},
\beq\label{eq:Friday2_basic}
\N{\Pi_1(u-u_h)}^2_\Hilbert  \leq \Re
 \big\langle \operator \Pi_1 (u-u_h), \Pi_1 (u-u_h)
\big\rangle_{\Hilbert^*\times \Hilbert}
+ (1 + \N{\Ptwo}_{\Hilbertzero\to\Hilbertzero^*})\N{\Pi_1(u-u_h)}^2_{\Hilbertzero}.
\eeq
Now, since $\Pi_0:\Hilbert\to \Ker \Pone$ 
and $\Ker \Pone  = \Ker \Pone^*$, for all $v\in\Hilbert$,
\begin{align*}
&\Re\big\langle \operator\Pi_1 v, \Pi_1 v\big\rangle_{\Hilbert^*\times \Hilbert} 
\\
&\qquad= \langle \operator v,v\rangle_{\Hilbert^*\times \Hilbert} - \langle \operator\Pi_0v,v\rangle_{\Hilbert^*\times \Hilbert} - \langle \operator v, \Pi_0 v\rangle_{\Hilbert^*\times \Hilbert} + \langle \operator\Pi_0 v,\Pi_0v\rangle_{\Hilbert^*\times \Hilbert}\\
&\qquad=\langle \operator v,v\rangle_{\Hilbert^*\times \Hilbert} + \langle \Ptwo \Pi_0v,v\rangle_{\Hilbertzero^*\times \Hilbertzero} + \langle \Ptwo v, \Pi_0 v\rangle_{\Hilbertzero^*\times \Hilbertzero} - \langle \Ptwo \Pi_0 v,\Pi_0v\rangle_{\Hilbertzero^*\times \Hilbertzero}.
\end{align*}
Therefore, by the boundedness of $\Ptwo:\Hilbertzero\to\Hilbertzero^*$ and
the inequality
\beq\label{eq:PeterPaul}
2ab\leq \varepsilon a^2 + \varepsilon^{-1}b^2, \quad\tfor a,b,\varepsilon>0,
\eeq
\beqs
\Re\big\langle \operator\Pi_1 v, \Pi_1 v
\big\rangle_{\Hilbert^*\times \Hilbert} 
\leq\Re \big\langle \operator v,v\big\rangle_{\Hilbert^*\times \Hilbert} + C\big( \varepsilon^{-1} \N{\Pi_0 v}^2_{\Hilbertzero } + \varepsilon \N{v}^2_{\Hilbertzero }\big).
\eeqs
Applying this last inequality with $v=u-u_h$, combining with \eqref{eq:Friday2_basic}, and then using Galerkin orthogonality \eqref{eq:Gog}, we find that 
\begin{align*}
\N{\Pi_1(u-u_h)}^2_{\Hilbert } &\leq \Re \big\langle 
\operator(u-u_h), (I-\Pi_h) u 
\big\rangle \\
&\quad + C \Big( \varepsilon^{-1} \N{\Pi_0 (u-u_h)}^2_{\Hilbertzero } + \varepsilon \N{u-u_h}^2_{\Hilbertzero } + 
\N{\Pi_1(u-u_h)}_{\Hilbertzero }^2
\Big).
\end{align*}
Therefore, by 
the boundedness of $\operator:\Hilbert \to \Hilbert ^*$ and 
\eqref{eq:PeterPaul} (with $a= \|u-u_h\|_{\cH}$ and $b = (1/2)\| P\|_{\cH\to \cH^*} \| (I- \Pi_h) u\|_{\cH}$),
\begin{align}\nonumber
\N{\Pi_1(u-u_h)}^2_{\Hilbert } &\leq 
\varepsilon \N{u-u_h}_{\Hilbert }^2
\\
&\quad+ C \Big(
\varepsilon^{-1} \N{(I-\Pi_h)u}^2_{\Hilbert } 
+ \varepsilon^{-1} \N{\Pi_0 (u-u_h)}^2_{\Hilbertzero }+ 
\N{\Pi_1(u-u_h)}_{\Hilbertzero }^2
\Big).
\label{eq:plant1}
\end{align}
By \eqref{eq:simple_norm2}, this last inequality implies \eqref{eq:STP1_basic} and the proof is complete.
\epf

\section{Definition of the operator $\Phash$ and associated results}\label{sec:Phash}

\subsection{Identification of $\Pi_1 \Hilbertzero$ with $\Pi_1^*\Hilbertzero^*$ and $(\Pi_1 \Hilbertzero)^*$}\label{sec:identify}

Since $\Pi_1\Hilbertzero $ is the kernel of the bounded operator $\Pi_0 :\Hilbertzero \to\Hilbertzero $, $\Pi_1\Hilbertzero $ is closed in $\Hilbertzero $, and thus $\Pi_1\Hilbertzero$ is a Hilbert space. We define
\beq\label{eq:ipPi1Hilbert}
(u,v)_{\Pi_1\Hilbertzero}:= (\Pi_1 u, \Pi_1 v)_{\Hilbertzero} \quad\tfor u,v \in \Pi_1 \Hilbertzero.
\eeq

We now define the maps identifying $\Pi_1 \Hilbertzero$ with $\Pi_1^*\Hilbertzero^*$ and $(\Pi_1 \Hilbertzero)^*$ and then prove that these maps are bijective (see Corollary \ref{cor:bijective} below). In particular, the rest of \S\ref{sec:Phash} crucially uses the fact that 
 the identification of $\Pi_1 \Hilbertzero$ with $\Pi_1^*\Hilbertzero^*$, denoted by $\eta$, is invertible.

\paragraph{Identification of $\Pi_1 \Hilbertzero$ with $\Pi_1^*\Hilbertzero^*$.}

Let $\eta:\Pi_1 \Hilbertzero \to \Pi_1^* \Hilbertzero^*$ be the identification of $\Pi_1 \Hilbertzero$ with $\Pi_1^*\Hilbertzero^*$ defined by
\beq\label{eq:eta}
\langle \eta u, v \rangle_{\Pi_1^* \Hilbertzero^* \times \Pi_1 \Hilbertzero} = (u,v)_{\Pi_1\Hilbertzero} \quad\tfor u,v \in \Pi_1 \Hilbertzero.
\eeq

\ble[Properties of $\eta$]\label{lem:eta}

\

(i) $\eta: \Pi_1 \Hilbertzero\to \Pi_1^*\Hilbertzero^*$ is injective.

(ii) $\eta = \Pi_1^* \iota\Pi_1$, and this formula extends $\eta$ to a map $\Hilbertzero\to \Hilbertzero^*$. 
\ele

\bpf
(i) Suppose $\eta u =0$ for some $u\in \Pi_1 \Hilbertzero$. Then, by \eqref{eq:eta}, $(u,v)_{\Pi_1\Hilbertzero}=0$ for all $v\in \Pi_1\Hilbertzero$, so that $u=0$.

(ii) By \eqref{eq:eta}, \eqref{eq:ipPi1Hilbert}, and the definition of the Riesz map $\iota :\Hilbertzero \to \Hilbertzero^*$ \eqref{eq:Riesz}, for $u,v\in \Pi_1 \Hilbertzero$, 
\begin{align}\nonumber
\langle \eta u, v \rangle_{\Pi_1^* \Hilbertzero^* \times \Pi_1 \Hilbertzero} =  (\Pi_1 u, \Pi_1 v)_{\Hilbertzero}
= \big\langle \iota \Pi_1 u, \Pi_1 v\big\rangle_{\Hilbertzero^*\times \Hilbertzero} 
&= \big\langle \Pi_1^*\iota \Pi_1 u, v\big\rangle_{\Hilbertzero^*\times \Hilbertzero}\\
&= \big\langle \Pi_1^*\iota \Pi_1 u, v\big\rangle_{\Pi_1^*\Hilbertzero^*\times \Pi_1\Hilbertzero}
\label{eq:eta_calc1}
\end{align}
(where in the last step we treat $\Pi_1 \Hilbertzero$ and $\Pi_1^*\Hilbertzero^*$ as subsets of $\Hilbertzero$ and $\Hilbertzero^*$, respectively).
Therefore \eqref{eq:eta_calc1} shows that 
$\eta = \Pi_1^* \iota\Pi_1$ as a map $\Pi_1 \Hilbertzero\to \Pi_1^*\Hilbertzero^*$. 
Since $\eta=\Pi^*_1 \eta \Pi_1$, 
the formula $\eta = \Pi_1^* \iota\Pi_1$ extends $\eta$ to a map $\Hilbertzero\to \Hilbertzero^*$.
\epf

\paragraph{Identification of $\Pi_1 \Hilbertzero$ with $(\Pi_1\Hilbertzero)^*$.}

Let $\widetilde{\eta} :\Pi_1 \Hilbertzero \to (\Pi_1\Hilbertzero)^*$ be defined by
\beq\label{eq:widetildeeta}
\langle \widetilde\eta u, v \rangle_{(\Pi_1 \Hilbertzero)^* \times \Pi_1 \Hilbertzero} = (u,v)_{\Pi_1\Hilbertzero} \quad\tfor u,v \in \Pi_1 \Hilbertzero
\eeq
(compare to \eqref{eq:eta}). 
By the Riesz representation theorem, $\widetilde \eta$ is bijective $\Pi_1 \Hilbertzero \to (\Pi_1\Hilbertzero)^*$.

\paragraph{Identification of $\Pi_1^* \Hilbertzero^*$ with $(\Pi_1\Hilbertzero)^*$.}

Let $\rho :\Pi_1^* \Hilbertzero^* \to (\Pi_1\Hilbertzero)^*$ be defined by:~given $\phi \in \Pi_1^*\Hilbertzero^*$ (so that $\phi= \Pi_1^*\phi$), 
\beq\label{eq:rho}
\langle \rho \phi , v \rangle_{(\Pi_1 \Hilbertzero)^* \times \Pi_1 \Hilbertzero} = \langle \Pi_1^* \phi, v\rangle_{\Hilbertzero^*\times\Hilbertzero}
\quad\tfa v \in \Pi_1 \Hilbertzero.
\eeq

\ble
$\rho :\Pi_1^* \Hilbertzero^* \to (\Pi_1\Hilbertzero)^*$  is injective. 
\ele

\bpf
If $\rho \phi=0$, where $\phi = \Pi^*_1\phi$, then, by definition, $\langle \Pi_1^* \phi, v\rangle_{\Hilbertzero^*\times\Hilbertzero}=0$ for all $v\in \Pi_1 \Hilbertzero$. Since $\Pi_0 \Pi_1=0$, this last equality holds in fact for all $v\in \Hilbertzero$, so that $\phi = \Pi^*_1 \phi=0$ as an element of $\Hilbertzero^*$, and hence also as an element of $\Pi_1^* \Hilbertzero^*$.
\epf 

\ble\label{lem:comp}
$\rho \eta = \widetilde\eta$ as maps $\Pi_1 \Hilbertzero\to (\Pi_1 \Hilbertzero)^*$.
\ele

\begin{corollary}\label{cor:bijective}
$\eta: \Pi_1 \Hilbertzero\to \Pi_1^*\Hilbertzero^*$, $\widetilde{\eta} :\Pi_1 \Hilbertzero \to (\Pi_1\Hilbertzero)^*$, and 
$\rho :\Pi_1^* \Hilbertzero^* \to (\Pi_1\Hilbertzero)^*$ are all bijective. 
\end{corollary}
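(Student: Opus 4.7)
The plan is to use the composition identity $\rho\eta = \widetilde\eta$ from Lemma~\ref{lem:comp} together with the facts already established that $\widetilde\eta$ is bijective (Riesz representation applied to the Hilbert space $\Pi_1\Hilbertzero$ with inner product \eqref{eq:ipPi1Hilbert}), that $\eta$ is injective (Lemma~\ref{lem:eta}(i)), and that $\rho$ is injective (just proved). No further computation should be needed — the result follows purely by abstract set-theoretic reasoning about compositions of injective and surjective maps.

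First I would deduce surjectivity of $\rho$. Since $\widetilde\eta:\Pi_1\Hilbertzero\to(\Pi_1\Hilbertzero)^*$ is surjective and $\widetilde\eta = \rho\eta$, every element of $(\Pi_1\Hilbertzero)^*$ lies in the range of $\rho$; combined with the injectivity of $\rho$ established above, this gives that $\rho:\Pi_1^*\Hilbertzero^*\to(\Pi_1\Hilbertzero)^*$ is bijective. Next I would deduce surjectivity of $\eta$: writing $\eta = \rho^{-1}\widetilde\eta$ (now that $\rho^{-1}$ exists), this expresses $\eta$ as a composition of two bijections, hence $\eta:\Pi_1\Hilbertzero\to\Pi_1^*\Hilbertzero^*$ is bijective. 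Bijectivity of $\widetilde\eta$ was already known and is simply recorded for completeness.

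I do not anticipate any significant obstacle: the only non-trivial ingredients (injectivity of $\eta$ and $\rho$, bijectivity of $\widetilde\eta$, and the identity $\rho\eta=\widetilde\eta$) are all in hand, and the deduction is one short diagram chase.
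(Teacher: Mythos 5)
Your argument is correct and matches the paper's proof exactly: the paper invokes the same three ingredients (injectivity of $\eta$ and $\rho$, bijectivity of $\widetilde\eta$, and $\rho\eta=\widetilde\eta$) and concludes bijectivity, with your version simply spelling out the brief diagram chase that the paper leaves implicit.
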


\bpf[Proof of Corollary \ref{cor:bijective}]
The bijectivity of $\widetilde{\eta} :\Pi_1 \Hilbertzero \to (\Pi_1\Hilbertzero)^*$ is a consequence of the Riesz representation theorem (as noted above). 
Since $\rho$ and $\eta$ are both injective and $\widetilde\eta$ is bijective, Lemma \ref{lem:comp} implies that $\rho$ and $\eta$ are bijective. 
\epf

\bpf[Proof of Lemma \ref{lem:comp}]
By the definition of $\rho$ \eqref{eq:rho}, Part (ii) of Lemma \ref{lem:eta}, the definition of $\iota$ \eqref{eq:Riesz}, \eqref{eq:ipPi1Hilbert}, and \eqref{eq:widetildeeta},
 for all $u,v\in \Pi_1 \Hilbertzero$, 
\begin{align*}
\langle \rho \eta u, v \rangle_{(\Pi_1 \Hilbertzero)^* \times \Pi_1 \Hilbertzero} = \langle \Pi_1^*\Pi_1^* \iota \Pi_1u, v\rangle_{\Hilbertzero^*\times\Hilbertzero}
&=\langle  \iota \Pi_1u, \Pi_1 v\rangle_{\Hilbertzero^*\times\Hilbertzero}\\
&= (\Pi_1 u,\Pi_1 v)_{\Hilbertzero}\\
&= (u,v)_{\Pi_1 \Hilbertzero} = 
\langle \widetilde\eta u, v \rangle_{(\Pi_1 \Hilbertzero)^* \times \Pi_1 \Hilbertzero}.
\end{align*}
\epf

Having proved that $\eta^{-1}$ exists, we now prove that $\eta^{-1}$ and $\eta$ are both self adjoint.

\ble[$\eta$ and $\eta^{-1}$ are both self-adjoint]\label{lem:eta_inverse}
With $\eta:\Pi_1 \Hilbertzero \to \Pi_1^* \Hilbertzero^*$ defined by \eqref{eq:eta},
\beqs
\langle \phi, v\rangle_{\Pi_1^* \Hilbertzero^*\times \Pi_1 \Hilbertzero} = \langle \eta^{-1}\phi, \eta v\rangle_{\Pi_1 \Hilbertzero\times \Pi_1^* \Hilbertzero^*} \quad\tfa \phi \in \Pi_1^*\Hilbertzero^* \tand v \in \Pi_1 \Hilbertzero.
\eeqs
\ele

\bpf
Let $u:= \eta^{-1}\phi\in \Pi_1 \Hilbertzero$ (which exists by Corollary \ref{cor:bijective}).
Then, by two applications of \eqref{eq:eta}, 
\begin{align*}
\langle \phi, v\rangle_{\Pi_1^* \Hilbertzero^*\times \Pi_1 \Hilbertzero} = 
\langle \eta u, v\rangle_{\Pi_1^* \Hilbertzero^*\times \Pi_1 \Hilbertzero} = 
(u,v)_{\Pi_1 \Hilbertzero}
= 
\langle  u, \eta v\rangle_{\Pi_1 \Hilbertzero\times \Pi_1^* \Hilbertzero^*},
\end{align*} 
and the result follows.
\epf

\subsection{Definition of $\Phash$ and $\Rhash$}

Recalling the matrix form of $\operator$ \eqref{eq:matrices}, 
we define $\cP: 
 \Pi_1\Hilbert \to \Pi_1^* \Hilbert ^*$ by
\beq\label{eq:cP}
\mathcal{P} := \Re \big( \Pone_{11} - \Ptwo_{11}\big).
\eeq
By definition, if $v\in \Pi_1\Hilbert$ then $\langle 
\cP
v,v\rangle_{\Pi_1^*\Hilbert ^*\times \Pi_1\Hilbert } 
=\Re\langle 
\operator
 v, v\rangle_{\Hilbert ^*\times \Hilbert }$. Therefore, 
by \eqref{eq:Garding2}, 
\beq\label{eq:GardingP111}
\big\langle 
\cP
v,v\big\rangle_{\Pi_1^*\Hilbert ^*\times \Pi_1\Hilbert } 
\geq \N{ v}^2_{\Hilbert } - (1+ \|\Ptwo\|_{\Hilbertzero\to\Hilbertzero^*})\N{ v}^2_{\Hilbertzero }\quad\tfa v\in \Pi_1 \Hilbert .
\eeq

\begin{theorem}[Friedrichs extension theorem]\label{thm:Friedrichs}
Suppose that $\Hilbertzeroalt$ is a  Hilbert space and $\Hilbertalt$ is dense in $\Hilbertzeroalt$. 
Suppose that $Q:\Hilbertalt\times\Hilbertalt\to \mathbb{C}$ is a sesquilinear form such that 
(i) $Q(u,v) = \overline{Q(v,u)}$ for all $u,v\in \Hilbertalt$,
(ii) there exists $C>0$ such that 
\beqs
Q(v,v) \geq 
-C \N{v}^2_{\Hilbertzeroalt}  \quad\tfa v\in \Hilbertalt,
\eeqs
and (iii) $\Hilbertalt$ is complete under the norm
\beqs
\vertiii{v}:= \sqrt{ Q(v,v) + (1+C) \N{v}_{\Hilbertzeroalt}^2}.
\eeqs

Then there exists a densely-defined, self-adjoint operator $\Palt:\Hilbertzeroalt\to \Hilbertzeroalt^*$  such that 
\beqs
Q(u,v) = \langle \Palt u,v\rangle_{\Hilbertzeroalt^*\times \Hilbertzeroalt} \quad \tfa u \in {\rm Dom}(\Palt) \tand v \in \Hilbertzeroalt,
\eeqs
where the domain of $\Palt$, ${\rm Dom}(\Palt)$, is defined by 
\beqs
{\rm Dom}(\Palt) : = \Big\{ u \in\Hilbertalt\,:\, \text{there exists $C_u>0$ such that }\,|Q(u,v)| \leq C_u \N{v}_{\Hilbertzeroalt} \, \tfa v\in \Hilbertalt\Big\}.
\eeqs
\end{theorem}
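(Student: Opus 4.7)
The plan is to carry out the classical Friedrichs construction:~realise $\Hilbertalt$ as a Hilbert space under a shifted version of $Q$, build a bounded positive self-adjoint resolvent on $\Hilbertzeroalt$ by Riesz representation, and define $\Palt$ as its inverse up to the shift.

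First I would introduce the shifted form
\[
\tilde Q(u,v) := Q(u,v) + (1+C)(u,v)_{\Hilbertzeroalt}, \qquad u,v\in\Hilbertalt.
\]
By (i), $\tilde Q$ is Hermitian, and by (ii), $\tilde Q(v,v) \geq \|v\|_{\Hilbertzeroalt}^2$, so $\tilde Q$ is a genuine inner product whose induced norm coincides with $\vertiii{\cdot}$. Assumption (iii) then makes $(\Hilbertalt,\tilde Q)$ a Hilbert space, and the estimate $\|v\|_{\Hilbertzeroalt}\le \vertiii{v}$ shows that $\Hilbertalt \hookrightarrow \Hilbertzeroalt$ is continuous.

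Second, for each $f\in\Hilbertzeroalt$ the antilinear map $v\mapsto (f,v)_{\Hilbertzeroalt}$ is continuous on $(\Hilbertalt,\tilde Q)$, so Riesz representation in $(\Hilbertalt,\tilde Q)$ produces a unique $Kf\in\Hilbertalt$ with $\tilde Q(Kf,v) = (f,v)_{\Hilbertzeroalt}$ for all $v\in\Hilbertalt$. Testing with $v=Kf$ yields $\|Kf\|_{\Hilbertzeroalt}^2 \le (f,Kf)_{\Hilbertzeroalt}$, so $K$ extends to a bounded operator $\Hilbertzeroalt\to\Hilbertzeroalt$. Hermitian symmetry of $\tilde Q$ together with the defining identity gives $(Kf,g)_{\Hilbertzeroalt} = \tilde Q(Kf,Kg) = \overline{\tilde Q(Kg,Kf)} = (f,Kg)_{\Hilbertzeroalt}$, so $K$ is self-adjoint and positive on $\Hilbertzeroalt$. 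Injectivity follows from density of $\Hilbertalt$ in $\Hilbertzeroalt$ (if $Kf=0$ then $(f,v)_{\Hilbertzeroalt}=0$ on a dense subspace), and then $\overline{\mathrm{Range}(K)} = (\ker K)^\perp = \Hilbertzeroalt$.

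Third, classical spectral theory for bounded positive self-adjoint operators with dense range gives that $A:=K^{-1}$ on $\mathrm{Dom}(A):=\mathrm{Range}(K)$ is a densely-defined self-adjoint operator on $\Hilbertzeroalt$ with $A\ge I$. I would then define $\Palt$ on the same domain by $\langle \Palt u,v\rangle_{\Hilbertzeroalt^*\times\Hilbertzeroalt} := (Au,v)_{\Hilbertzeroalt} - (1+C)(u,v)_{\Hilbertzeroalt}$. If $u=Kf$, then $\tilde Q(u,v) = (f,v)_{\Hilbertzeroalt}$ on $\Hilbertalt$, hence $Q(u,v) = \langle \Palt u,v\rangle_{\Hilbertzeroalt^*\times \Hilbertzeroalt}$, giving the realization property. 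The remaining point is to verify that $\mathrm{Range}(K)$ coincides with the domain defined in the theorem statement:~the inclusion ``$\subseteq$'' is immediate from the formula above, while for ``$\supseteq$'' one argues that if $|Q(u,v)|\le C_u\|v\|_{\Hilbertzeroalt}$ on $\Hilbertalt$, then $v\mapsto \tilde Q(u,v)$ also extends to a continuous functional on $\Hilbertzeroalt$; Riesz in $\Hilbertzeroalt$ produces $f\in\Hilbertzeroalt$ with $\tilde Q(u,v)=(f,v)_{\Hilbertzeroalt}$ on $\Hilbertalt$, and non-degeneracy of $\tilde Q$ on $\Hilbertalt$ forces $u=Kf$.

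The main obstacle will be this clean identification of the domain and keeping the two different Riesz maps (one on $(\Hilbertalt,\tilde Q)$, one on $\Hilbertzeroalt$) carefully distinguished; the algebraic manipulations showing boundedness, symmetry, and positivity of $K$, as well as the passage from $K$ to its self-adjoint inverse $A$, are entirely standard once the Hilbert-space structure on $\Hilbertalt$ has been set up via (iii).
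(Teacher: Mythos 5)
Your argument is correct, and it is precisely the standard Friedrichs construction: shift $Q$ to make it coercive, use it to equip $\Hilbertalt$ with a complete inner product, build the bounded positive self-adjoint resolvent $K$ via Riesz representation, and recover the operator as $K^{-1}$ minus the shift. The paper does not actually prove this theorem --- it only cites Reed--Simon, Grubb, and Friedrichs' original paper --- and your argument is the one that appears in those references, so there is no divergence to report. One small point worth flagging: the theorem as stated writes the domain condition as ``$|Q(u,v)|\le C_u\|v\|_{\Hilbertzeroalt}$ for all $v\in\Hilbertzeroalt$,'' which is a slip (the form $Q$ is only defined on $\Hilbertalt\times\Hilbertalt$); you silently and correctly read it as ``for all $v\in\Hilbertalt$,'' and your density argument then upgrades the bound to a bounded functional on all of $\Hilbertzeroalt$. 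Your domain identification and the two-Riesz-map bookkeeping are handled cleanly.
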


\bpf[References for the proof]
See, e.g., 
\cite[Theorem VIII.15, Page 278]{ReSi:80},  \cite[Theorem 12.24, Page 360]{Gr:08} (with  \cite{Fr:34} the original paper). 
\epf

\begin{corollary}\label{cor:Friedrichs}
$\cP$ defined by \eqref{eq:cP} extends to a densely-defined, self-adjoint operator
$\Pi_1\Hilbertzero \to \Pi_1^* \Hilbertzero^*$ 
with 
\beq\label{eq:Qsesqui}
\big\langle \cP u,v \big\rangle_{\Pi_1^* \Hilbert^*\times \Pi_1\Hilbert}= \big\langle \cP u,v\big\rangle_{\Pi_1^*\Hilbertzero^*\times \Pi_1^*\Hilbertzero} \quad \tfa u \in {\rm Dom}(\cP)\subset \Pi_1\Hilbert \tand v \in \Pi_1 \Hilbertzero.
\eeq
Furthermore $\eta^{-1} \cP$ is a densely-defined, self-adjoint operator $\Pi_1 \Hilbertzero \to \Pi_1 \Hilbertzero$, 
with its spectrum  
bounded below by $- \|\Ptwo\|_{\Hilbertzero\to\Hilbertzero^*}$.
\end{corollary}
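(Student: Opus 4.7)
The plan is to apply the Friedrichs extension theorem (Theorem \ref{thm:Friedrichs}) to the sesquilinear form
$Q(u,v) := \langle \cP u, v\rangle_{\Pi_1^* \Hilbert^* \times \Pi_1 \Hilbert}$
with $\Hilbertalt := \Pi_1 \Hilbert$ (inheriting the norm from $\Hilbert$) and $\Hilbertzeroalt := \Pi_1 \Hilbertzero$ (equipped with the inner product \eqref{eq:ipPi1Hilbert}). First I would verify the three hypotheses. Hermitian symmetry $Q(u,v) = \overline{Q(v,u)}$ is immediate from $\cP = \Re(\Pone_{11} - \Ptwo_{11})$. Lower semi-boundedness is precisely \eqref{eq:GardingP111} (after discarding the non-negative term $\N{v}^2_{\Hilbert}$), giving the Friedrichs constant $C = 1 + \N{\Ptwo}_{\Hilbertzero\to\Hilbertzero^*}$. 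For completeness, I would note that $\Pi_1\Hilbert$ is closed in $\Hilbert$ (being the range of a bounded idempotent $\Pi_1:\Hilbert\to\Hilbert$ by \eqref{eq:simple_norm2}), and observe via \eqref{eq:Garding} and the definition of $\N{\cdot}_\Hilbert$ that the Friedrichs norm $\vertiii{v} = \sqrt{Q(v,v) + (1+C)\N{v}^2_{\Pi_1\Hilbertzero}}$ is equivalent to $\N{\cdot}_\Hilbert$ on $\Pi_1\Hilbert$, so $\Pi_1\Hilbert$ is complete in $\vertiii{\cdot}$.

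I would then check density of $\Pi_1\Hilbert$ in $\Pi_1\Hilbertzero$:~given $u \in \Pi_1\Hilbertzero$, by density of $\Hilbert$ in $\Hilbertzero$ there exist $u_n \in \Hilbert$ with $u_n \to u$ in $\Hilbertzero$; then $\Pi_1 u_n \in \Pi_1\Hilbert$ and $\Pi_1 u_n \to \Pi_1 u = u$ in $\Pi_1\Hilbertzero$ by continuity of $\Pi_1:\Hilbertzero\to\Hilbertzero$. Theorem \ref{thm:Friedrichs} then produces a densely defined self-adjoint operator $\widetilde\cP:\Pi_1\Hilbertzero \to (\Pi_1\Hilbertzero)^*$ with ${\rm Dom}(\widetilde\cP)\subset \Pi_1\Hilbert$ and $\langle \widetilde\cP u, v\rangle_{(\Pi_1\Hilbertzero)^*\times \Pi_1\Hilbertzero} = Q(u,v)$. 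Via the bijection $\rho:\Pi_1^*\Hilbertzero^*\to(\Pi_1\Hilbertzero)^*$ from Corollary \ref{cor:bijective}, I re-express $\widetilde\cP$ as $\cP := \rho^{-1}\widetilde\cP : \Pi_1\Hilbertzero \to \Pi_1^*\Hilbertzero^*$; the identity \eqref{eq:Qsesqui} then follows directly from the definition \eqref{eq:rho} of $\rho$.

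For the final assertion, Lemma \ref{lem:comp} gives $\widetilde\eta = \rho\eta$, so $\eta^{-1}\cP = \eta^{-1}\rho^{-1}\widetilde\cP = \widetilde\eta^{-1}\widetilde\cP$. Since $\widetilde\eta$ is precisely the Riesz isomorphism for $\Pi_1\Hilbertzero$ (by \eqref{eq:widetildeeta}), the standard passage from Friedrichs' dual formulation to operators on a Hilbert space to itself shows $\widetilde\eta^{-1}\widetilde\cP$ is self-adjoint as an unbounded operator $\Pi_1\Hilbertzero\to\Pi_1\Hilbertzero$ in the classical sense. For the spectrum bound, since the infimum of the spectrum of a lower-bounded self-adjoint operator equals the infimum of its Rayleigh quotient on the form domain, I compute, using Lemma \ref{lem:eta_inverse}, $(\eta^{-1}\cP v, v)_{\Pi_1\Hilbertzero} = \langle \cP v, v\rangle_{\Pi_1^*\Hilbertzero^*\times\Pi_1\Hilbertzero}$, which by \eqref{eq:GardingP111} and $\N{v}^2_\Hilbert \geq 0$ is bounded below by $-(1+\N{\Ptwo}_{\Hilbertzero\to\Hilbertzero^*})\N{v}^2_{\Pi_1\Hilbertzero}$, yielding the claimed spectrum bound.

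The main obstacle will be the bookkeeping:~carefully tracking the three identifications $\eta$, $\widetilde\eta$, and $\rho$ between $\Pi_1\Hilbertzero$, $\Pi_1^*\Hilbertzero^*$, and $(\Pi_1\Hilbertzero)^*$, and verifying that Friedrichs' notion of self-adjointness (for operators $H\to H^*$) transfers correctly to the classical notion (for operators $H\to H$) under the composition $\widetilde\eta^{-1} = \eta^{-1}\rho^{-1}$.
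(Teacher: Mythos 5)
Your approach is the same as the paper's — apply the Friedrichs extension theorem with $\Hilbertalt = \Pi_1\Hilbert$, $\Hilbertzeroalt=\Pi_1\Hilbertzero$, then use the identification maps to recast the result as an operator $\Pi_1\Hilbertzero\to\Pi_1\Hilbertzero$ and read off the spectrum bound. Your treatment of the identification bookkeeping (making $\rho$, $\widetilde\eta$, and $\eta$ explicit, writing $\eta^{-1}\cP = \widetilde\eta^{-1}\widetilde\cP$) is in fact more careful than the paper's, which merely notes the identification in words without introducing a symbol for it.

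There is, however, a genuine gap in your spectrum bound. You drop the term $\N{v}^2_{\Hilbert}$ from \eqref{eq:GardingP111} using only $\N{v}^2_{\Hilbert}\geq 0$, arriving at $(\eta^{-1}\cP v,v)_{\Pi_1\Hilbertzero}\geq -(1+\N{\Ptwo}_{\Hilbertzero\to\Hilbertzero^*})\N{v}^2_{\Pi_1\Hilbertzero}$. That gives spectrum $\geq -(1+\N{\Ptwo})$, which is strictly weaker than the claimed $\geq -\N{\Ptwo}$. The fix is to use the sharper fact $\N{v}^2_{\Hilbert}\geq\N{v}^2_{\Hilbertzero}$, which follows from \eqref{eq:Garding} together with $\Re\langle\Pone v,v\rangle\geq 0$: then $\N{v}^2_{\Hilbert}-(1+\N{\Ptwo})\N{v}^2_{\Hilbertzero}\geq -\N{\Ptwo}\N{v}^2_{\Hilbertzero}$, which is exactly the paper's step. (The paper additionally justifies the spectrum bound via a Lax--Milgram invertibility argument rather than quoting the Rayleigh-quotient characterisation, but your route is equally valid once the constant is corrected.) While you are correcting this, you should use the same sharper inequality when verifying hypothesis (ii) of Theorem \ref{thm:Friedrichs}: your choice $C=1+\N{\Ptwo}$ works formally, but the natural choice consistent with the claimed spectrum bound is $C=\N{\Ptwo}$.
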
 

\bre
We work with the Friedrichs extension of $\cP$, since the spectral theorem (used below to construct the elliptic projection operator $P^\sharp$ \eqref{eq:Phash}) is most-naturally formulated for a, possibly unbounded, self-adjoint operator from a Hilbert space to itself.
\ere

\bpf[Proof of Corollary \ref{cor:Friedrichs}]
We apply Theorem \ref{thm:Friedrichs} with $\Hilbertalt= \Pi_1 \Hilbert$, $\Hilbertzeroalt=\Pi_1 \Hilbertzero$, and $Q(u,v)= \langle \cP u,v\rangle_{\Hilbert^*\times\Hilbert}$. We also identify  
$(\Pi_1\Hilbert)^*$ and $\Pi_1^* \Hilbert ^*$ (so that $\cP: \Pi_1 \Hilbert \to (\Pi_1 \Hilbert)^*$, i.e., $\cP: H\to H^*$); this identification is analogous to the identification between 
$(\Pi_1 \Hilbertzero)^*$ and $\Pi_1^*\Hilbertzero^*$ described in \S\ref{sec:identify}. However, 
 we do not introduce any notation for it 
since this identification is only used inside this proof and inside the proof of Lemma \ref{lem:S1} (when using the the Lax--Milgram lemma).
The proof of Lemma \ref{lem:Phash_inverse} uses the Lax--Milgram lemma with the analogous identification of $\Pi_0^*\Hilbert^*$ and $(\Pi_0 \Hilbert)^*$. 

We now check the assumptions of Theorem \ref{thm:Friedrichs}. Since $\Hilbert$ is dense in $\Hilbertzero$, and $\Pi_1:\Hilbertzero\to\Hilbertzero$ is bounded, $\Hilbertalt=\Pi_1\Hilbert$ is dense in $\Hilbertzeroalt = \Pi_1\Hilbertzero$. 
By its definition \eqref{eq:cP}, $\cP: \Pi_1\Hilbert\to \Pi_1^*\Hilbert^*$ is self adjoint; i.e., Assumption (i) of Theorem \ref{thm:Friedrichs} is satisifed. The G\aa rding inequality \eqref{eq:GardingP111} then implies that Assumptions (ii) and (iii) of Theorem \ref{thm:Friedrichs} are satisfied with $C= C_{\Ptwo}$.

We denote the extension $\Palt$ given by Theorem \ref{thm:Friedrichs} also by $\cP$, so that we extend $\cP$ to 
a densely-defined, self-adjoint operator
$\Pi_1\Hilbertzero \to \Pi_1^* \Hilbertzero^*$.

By \eqref{eq:eta}, the self-adjointness of $\cP: \Pi_1\Hilbertzero \to \Pi_1^* \Hilbertzero^*$, and \eqref{eq:eta} again, for all $u\in  {\rm Dom}(\eta^{-1}\cP)$, 
\begin{align*}
\big( \eta^{-1} \cP u,v\big)_{\Pi_1 \Hilbertzero} = 
\big\langle \cP u,v\big\rangle_{\Pi_1^*\Hilbertzero^*\times\Pi_1 \Hilbertzero} = 
\big\langle u,\cP v\big\rangle_{\Pi_1\Hilbertzero\times\Pi_1^* \Hilbertzero^*} = 
\big( u, \eta^{-1} \cP v\big)_{\Pi_1 \Hilbertzero};
\end{align*}
thus $\eta^{-1} \cP: \Pi_1 \Hilbertzero \to \Pi_1 \Hilbertzero$ is a densely-defined  self-adjoint operator. 
Finally, by \eqref{eq:GardingP111}, for all $v\in {\rm Dom}(\eta^{-1}\cP)\subset \Pi_1\Hilbertzero$, 
\beqs
\big(\eta^{-1} \cP v,v\big)_{\Pi_1 \Hilbertzero} \geq \N{v}^2_\Hilbert - (1+ \|\Ptwo\|_{\Hilbertzero\to\Hilbertzero^*})\N{ v}^2_{\Hilbertzero }
\geq  - \|\Ptwo\|_{\Hilbertzero\to\Hilbertzero^*}\N{ v}^2_{\Hilbertzero}.
\eeqs
For all $\varepsilon>0$, $\eta^{-1}\cP +  \|\Ptwo\|_{\Hilbertzero\to\Hilbertzero^*} + \varepsilon
:{\rm Dom}(\eta^{-1}\cP)\to\Pi_1\Hilbertzero$ is then invertible by a variant of the Lax--Milgram lemma for densely-defined operators; see, e.g., \cite[Theorem 12.18]{Gr:08} or the proof of \cite[Theorem VIII.15]{ReSi:80}. 
Thus the spectrum of $\eta^{-1}\cP$ (i.e., the set of $\lambda$ such that $\eta^{-1}\cP - \lambda:{\rm Dom}(\eta^{-1}\cP)\to\Pi_1\Hilbertzero$ is not invertible) is bounded below by $- \|\Ptwo\|_{\Hilbertzero\to\Hilbertzero^*}$. 
\epf

We now use the functional calculus for 
$\eta^{-1}\cP:\Pi_1 \Hilbertzero \to \Pi_1 \Hilbertzero$ to define
\beq\label{eq:S}
\smoother:= \psi\big(\eta^{-1}\cP\big),
\eeq 
where $\psi\in C^{\infty}_{\rm comp}(\Rea; [0,\infty))$ is such that 
\beq\label{eq:psi_inequality}
x+ \psi^2(x)\geq 1 \quad \tfor \quad x\geq - \|\Ptwo\|_{\Hilbertzero\to\Hilbertzero^*}.
\eeq

We recap the following results about the functional calculus.

\begin{theorem}[Functional-calculus results]\label{thm:func_calc}
Let $\cL$ be a densely-defined, self-adjoint operator on a Hilbert space $\Hilbertzeroalt$, and let $\sigma(\cL)$ denote its spectrum.

(i) If $\psi \in L^\infty(\Rea;\Rea)$ then $\psi(\cL):\Hilbertzeroalt\to\Hilbertzeroalt$ is self-adjoint, in the sense that $(\psi(\cL)u,v)_{\Hilbertzeroalt}=(u,\psi(L)v)_{\Hilbertzeroalt}$ for all $u,v\in\Hilbertzeroalt$.

(ii) If $\psi \in L^\infty(\Rea;\Com)$ then $\|\psi(\cL)\|_{\Hilbertzeroalt\to\Hilbertzeroalt} \leq \sup_{\lambda \in \sigma(\cL)} |\psi(\lambda)|$. 

(iii) If $\psi \in L^\infty(\Rea;\Rea)$ is such that $\psi\geq c>0$ on $\sigma(\cL)$, then 
\beqs
\big( \psi(\cL) v, v \big)_{\Hilbertzeroalt} \geq c \N{v}^2_{\Hilbertzeroalt} \quad\tfa v \in \Hilbertzeroalt.
\eeqs
\end{theorem}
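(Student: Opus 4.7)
The plan is to reduce every assertion to the spectral theorem for densely-defined self-adjoint operators. Recall that this theorem (see, e.g., Reed--Simon Vol.~I, Theorem VIII.4) provides a $\sigma$-finite measure space $(X,\mu)$, a unitary $U:\Hilbertzeroalt\to L^2(X,\mu)$, and a real-valued measurable function $f:X\to\Rea$ such that $\cL = U^*M_fU$ on the natural domain $\{u\in\Hilbertzeroalt\,:\,f\cdot(Uu)\in L^2(X,\mu)\}$, where $M_f$ denotes multiplication by $f$. The bounded Borel functional calculus is then defined by $\psi(\cL) := U^*M_{\psi\circ f}U$ for $\psi\in L^\infty(\Rea;\Com)$, and the key ancillary fact is the spectral-mapping--type statement that the essential range of $\psi\circ f$ equals $\{\psi(\lambda):\lambda\in\sigma(\cL)\}$ up to a $\mu$-null set.

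With this reduction the three assertions become elementary statements about multiplication operators on $L^2(X,\mu)$. For (i), if $\psi$ is real-valued then $\psi\circ f$ is real-valued, so $M_{\psi\circ f}$ is self-adjoint on $L^2(X,\mu)$ in the sense that $(M_{\psi\circ f}g,h)_{L^2}=(g,M_{\psi\circ f}h)_{L^2}$ for all $g,h\in L^2(X,\mu)$; conjugating by the unitary $U$ yields the claim. For (ii), the standard computation gives $\N{M_g}_{L^2\to L^2}=\|g\|_{L^\infty(X,\mu)}$ for any $g\in L^\infty(X,\mu)$, applied here to $g=\psi\circ f$; the essential supremum equals $\sup_{\lambda\in\sigma(\cL)}|\psi(\lambda)|$ by the spectral-mapping fact above. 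For (iii), if $\psi\geq c$ on $\sigma(\cL)$ then $\psi\circ f\geq c$ $\mu$-almost everywhere on $X$, so for $g=Uv$ we have
\[
\big(\psi(\cL)v,v\big)_{\Hilbertzeroalt}=\int_X (\psi\circ f)|g|^2\,\rd\mu\geq c\int_X|g|^2\,\rd\mu=c\N{v}^2_{\Hilbertzeroalt},
\]
using unitarity of $U$.

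The main (only) obstacle is invoking the spectral theorem rigorously, since $\cL$ is only densely defined rather than bounded; once this is in hand, every step above is a one-line calculation on $L^2(X,\mu)$. In our setting $\cL=\eta^{-1}\cP$ is self-adjoint on $\Pi_1\Hilbertzero$ by Corollary \ref{cor:Friedrichs}, so the theorem applies directly. Since the statement is entirely standard, a single reference (for instance \cite[Theorem VIII.5 and \S VIII.3]{ReSi:80} or \cite[Chapter 13]{Gr:08}) suffices in place of a self-contained proof.
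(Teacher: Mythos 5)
Your proposal is correct and your final remark is exactly what the paper does: its ``proof'' consists solely of the citation to \cite[\S VIII.3]{ReSi:80} (the same spectral-theorem material you invoke), so there is no competing argument in the paper to compare against. Your self-contained derivation via the multiplication-operator form of the spectral theorem is the standard one that the reference covers, and parts (i) and (iii) go through exactly as you write them. One small imprecision in (ii): the essential range of $\psi\circ f$ is a subset of $\C$, so ``equals $\{\psi(\lambda):\lambda\in\sigma(\cL)\}$ up to a $\mu$-null set'' does not quite parse, and equality can fail for a Borel $\psi$ that is not continuous. What is true, and what you actually need, is the one-sided statement: $\sigma(\cL)$ is the essential range of $f$, hence $\mu\bigl(f^{-1}(\Rea\setminus\sigma(\cL))\bigr)=0$ (cover the open complement by countably many balls each of $\mu$-null preimage), so $f(x)\in\sigma(\cL)$ for $\mu$-a.e.\ $x$ and therefore $\operatorname{esssup}_X|\psi\circ f|\le\sup_{\lambda\in\sigma(\cL)}|\psi(\lambda)|$, which is exactly the inequality asserted in (ii).
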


\bpf[References for the proof]
See, e.g., \cite[\S VIIII.3, Page 259]{ReSi:80}.
\epf

\ble[Properties of $\smoother$ inherited from the functional calculus]\label{lem:SPpowers}
If 
$\smoother:=\psi(\eta^{-1} \cP  )$ then 

(a) $\smoother: \Pi_1 \Hilbertzero  \to \Pi_1 \Hilbertzero $.

(b) $\eta \smoother = \smoother^*\eta$, where $\smoother^*: \Pi_1^*\Hilbertzero^*\to \Pi_1\Hilbertzero$.

(c) Given $m\in \mathbb{Z}^+$, there exists $C>0$ such that 
\beqs
\N{\Pi_1(\eta^{-1} \cP )^m \psi(\eta^{-1} \cP  )\Pi_1 }_{ \Hilbertzero  \to  \Hilbertzero }\leq C.
\eeqs
\ele

\bpf
Part (a) follows immediately from Part (i) of Theorem \ref{thm:func_calc}.
Since $\psi$ is real valued, 
by Part (i) of Theorem \ref{thm:func_calc}, $\smoother: \Pi_1 \Hilbertzero  \to \Pi_1 \Hilbertzero$ satisfies 
\beqs
(\smoother u,v)_{\Pi_1 \Hilbertzero}=(u,\smoother v)_{\Pi_1 \Hilbertzero} \quad\tfa u,v \in \Pi_1\Hilbertzero. 
\eeqs
Therefore, by the definition of $\eta$ \eqref{eq:eta}, 
\beqs
\langle \eta \smoother u, v\rangle_{\Pi_1^*\Hilbertzero^*\times \Pi_1 \Hilbertzero} 
=\langle  \eta u,  \smoother v\rangle_{\Pi_1^*\Hilbertzero^*\times \Pi_1 \Hilbertzero},
\eeqs
so that Part (b) follows.

Finally, since $\psi$ has compact support, the function $t\mapsto t^m \psi(t)$ is bounded for all $m\geq 0$; Part (c) then follows by Part (ii) of Theorem \ref{thm:func_calc}.
\epf

\ble\label{lem:S1}
With $\smoother$ defined by \eqref{eq:S}, $\Pone_{11} -\Ptwo_{11}+\eta\smoother^2: \Pi_1\Hilbert \to \Pi_1^* \Hilbert ^*$ is continuous, 
\beq\label{eq:Phashcoercivity1}
\Re\big\langle \big(\Pone_{11} -\Ptwo_{11}+\eta\smoother^2\big) v, v\big\rangle_{\Hilbert ^*\times \Hilbert } \geq C \N{v}^2_{\Hilbert } \quad\tfa v\in \Pi_1\Hilbert,
\eeq
and thus $\Pone_{11} -\Ptwo_{11}+\eta\smoother^2: \Pi_1\Hilbert \to \Pi_1^* \Hilbert ^*$ is invertible.
\ele

\bpf
Since $\smoother^2: \Pi_1\Hilbert \subset \Pi_1 \Hilbertzero\to \Pi_1\Hilbertzero$ 
and $\eta: \Pi_1 \Hilbertzero\to \Pi_1^*\Hilbertzero^* \subset \Pi_1^*\Hilbert^*$ 
are continuous, 
$\eta\smoother^2: \Pi_1\Hilbert \to \Pi_1^* \Hilbert ^*$ is continuous.
Since $\Pone_{11}$ and $\Ptwo_{11}$ are continuous $\Pi_1\Hilbert \to \Pi_1^* \Hilbert ^*$ by assumption, the continuity result follows. 

For the coercivity, by the definition of $\smoother$ \eqref{eq:S},
\begin{align}\nonumber
\Re\big\langle \big(\Pone_{11} -\Ptwo_{11}+\eta\smoother^2\big) v, v\big\rangle_{\Hilbert ^*\times \Hilbert } 
&=\big\langle \big( \cP
+\eta \psi^2(\eta^{-1} \cP ) \big) v, v\big\rangle_{\Hilbert ^*\times \Hilbert }.
\end{align}
By \eqref{eq:Qsesqui} and the fact that $\eta ^{-1}$ is the identification map $\Pi_1^* \Hilbertzero^* \to \Pi_1\Hilbertzero$, 
\beqs
\big\langle \cP v, v\big\rangle_{\Hilbert^*\times \Hilbert} 
=\big\langle \cP v, v\big\rangle_{\Hilbertzero^*\times \Hilbertzero} 
= \big( \eta ^{-1} \cP v, v\big)_{\Pi_1\Hilbertzero}
 \quad\tfa v\in {\rm Dom}(\cP)\subset\Pi_1\Hilbertzero.
\eeqs
Therefore, by the inequality  \eqref{eq:psi_inequality} and Part (iii) of Theorem \ref{thm:func_calc}, for all $v\in  {\rm Dom}(\cP)$, 
\begin{align}
\Re\big\langle \big(\Pone_{11} -\Ptwo_{11}+\eta\smoother^2\big) v, v\big\rangle_{\Hilbert ^*\times \Hilbert } 
=\big( \big( \eta ^{-1}\cP + \psi^2(\eta ^{-1}\cP)\big)v,v\big)_{\Pi_1\Hilbertzero}
\geq \N{v}^2_{\Hilbertzero }.
\label{eq:coercivity_temp1}
\end{align}
Since ${\rm Dom}(\cP)$ is dense in $\Pi_1\Hilbertzero$ (since $\cP$ is densely-defined by Corollary \ref{cor:Friedrichs}), \eqref{eq:coercivity_temp1} holds for all $v\in \Pi_1\Hilbert$.

We now use the G\aa rding inequality \eqref{eq:GardingP111}  to replace the $\Hilbertzero$ norm on the right-hand side of \eqref{eq:coercivity_temp1} by a $\Hilbert$ norm and obtain \eqref{eq:Phashcoercivity1}. Since $\psi^2 \geq 0$, $\smoother^2\geq 0$. Using this, along with \eqref{eq:GardingP111} and \eqref{eq:coercivity_temp1}, we find that, for all $\varepsilon>0$ and $v\in \Pi_1 \Hilbert$,
\begin{align*}
&\Re\big\langle \big(\Pone_{11} -\Ptwo_{11}+\eta\smoother^2\big) v, v\big\rangle_{\Hilbert ^*\times \Hilbert } \\
&\quad \geq \varepsilon \Re\big\langle \big(\Pone_{11} -\Ptwo_{11}\big) v, v\big\rangle_{\Hilbert ^*\times \Hilbert } + (1-\varepsilon)\Re\big\langle \big(\Pone_{11} -\Ptwo_{11}+\eta\smoother^2\big) v, v\big\rangle_{\Hilbert ^*\times \Hilbert } \\
&\quad \geq \varepsilon 
\Big(\N{ v}^2_{\Hilbert } - (1+  \|\Ptwo\|_{\Hilbertzero\to\Hilbertzero^*})\N{ v}^2_{\Hilbertzero }\Big) 
+(1-\varepsilon) \N{v}^2_{\Hilbertzero }
\end{align*}
so that, choosing $0<\varepsilon\leq (2+ \|\Ptwo\|_{\Hilbertzero\to\Hilbertzero^*})^{-1}$, we see that $\Pone_{11} -\Ptwo_{11}+\eta\smoother^2$ is coercive $\Pi_1\Hilbert \to \Pi_1^*\Hilbert^*$.

Invertibility of $\Pone_{11} -\Ptwo_{11}+\eta\smoother^2: \Pi_1\Hilbert \to \Pi_1^* \Hilbert ^*$ then follows from the Lax--Milgram lemma (where, as in the application of 
Theorem \ref{thm:Friedrichs} we identify  
$(\Pi_1\Hilbert)^*$ and $\Pi_1^* \Hilbert ^*$).
\epf

Let 
\beq\label{eq:Phash}
\Phash = 
\begin{pmatrix}
-\Ptwo_{00} & 0\\
-\Ptwo_{10} & \Pone_{11} -\Ptwo_{11} +\eta\smoother^2
\end{pmatrix} 
= \operator + \Pi^*_1 \eta\smoother^2 \Pi_1.
\eeq
With this definition, we record for later that \eqref{eq:Phashcoercivity1} is equivalent to 
\beq\label{eq:Phashcoercivity2}
\Re\big\langle \Phash\Pi_1 v,\Pi_1 v\big\rangle_{\Hilbert ^*\times \Hilbert } \geq C \N{\Pi_1 v}^2_{\Hilbert } \quad\tfa v\in \Hilbert .
\eeq

\ble\label{lem:Phash_inverse}
$\Rhash:\Hilbert ^*\to \Hilbert $ is well defined with $\| \Rhash\|_{\Hilbert ^*\to \Hilbert }\leq C$.
\ele

\bpf
By the matrix form of $\Phash$ \eqref{eq:Phash} and the fact that $\Pone_{11} -\Ptwo_{11}+\eta\smoother^2: \Pi_1\Hilbert \to \Pi_1^* \Hilbert ^*$ is invertible, the result follows if $\Ptwo_{00}:\Pi_0\Hilbert \to \Pi_0^*\Hilbert ^*$ is invertible. We claim that $\Ptwo_{00}: \Pi_0 \Hilbert \to \Pi_0^* \Hilbert ^*$ satisfies
\beq\label{eq:P200coercivity}
\Re \big\langle \Ptwo_{00} v,v
\big\rangle_{\Hilbert ^*\times\Hilbert } \geq C_{\Ptwo } \N{v}^2_{\Hilbert } \quad\tfa v\in \Pi_0 \Hilbert ,
\eeq
from which the result follows by the Lax--Milgram lemma (where we identify $\Pi_0^*\Hilbert^*$ and $(\Pi_0 \Hilbert)^*$).

By (in this order) the definition $\Ptwo_{00}:=\Pi_0^*\Ptwo\Pi_0$ \eqref{eq:matrixnotation}, 
the inclusion $\Pi_0 \Hilbert \subset \Hilbertzero $ (by \eqref{eq:simple_norm2}),
the boundedness of $\Ptwo  :\Hilbertzero \to\Hilbertzero^*$, 
 the coercivity of $\Ptwo $  \eqref{eq:P2}, and \eqref{eq:simple_norm2}, for all $v\in \Hilbert$,
\begin{align*}
\Re \big\langle \Ptwo_{00} \Pi_0 v,\Pi_0 v
\big\rangle_{\Hilbert ^*\times\Hilbert } = 
\Re \big\langle \Ptwo  \Pi_0v,\Pi_0v
\big\rangle_{\Hilbert ^*\times\Hilbert } 
&= 
\Re \big\langle \Ptwo  \Pi_0v,\Pi_0v
\big\rangle_{\Hilbertzero ^*\times\Hilbertzero } \\
&\geq C_{\Ptwo } \N{\Pi_0 v}^2_{\Hilbertzero } = C_{\Ptwo } \N{\Pi_0 v}^2_{\Hilbert };
\end{align*}
i.e., \eqref{eq:P200coercivity} holds and the proof is complete.
\epf

\ble\label{lem:PhashGarding}
$\Phash: \Hilbert \to \Hilbert ^*$ satisfies a
G\aa rding inequality; i.e., there exists $C_1,C_2>0$ such that
\beq\label{eq:GardingPhash1}
\Re \big\langle \Phash v, v\big\rangle_{\Hilbert^*\times \Hilbert} \geq C_1 \N{v}^2_{\Hilbert }-C_2 \N{ v}^2_{\Hilbertzero } \quad\tfa v\in \Hilbert .
\eeq
\ele

\bpf
We first claim that it is sufficient to prove that there exist $C_1',C_2'>0$ such that, for all $v\in \Hilbert$,
\beq\label{eq:GardingPhash}
\Re \big\langle \Phash v, v\big\rangle_{\Hilbert^*\times \Hilbert} \geq C_1' \N{\Pi_1 v}^2_{\Hilbert }-C_2' \N{\Pi_0 v}^2_{\Hilbertzero } \quad\tfa v\in \Hilbert .
\eeq
Indeed, by \eqref{eq:simple_norm2} and \eqref{eq:PeterPaul},
\beqs
\N{v}_{\Hilbert}^2 \leq \big( \N{\Pi_1 u}_{\Hilbert} + \N{\Pi_0 u}_{\Hilbertzero}\big)^2 \leq (1 + \varepsilon)  \N{\Pi_1 u}_{\Hilbert}^2 + (1 + \varepsilon^{-1}) \N{\Pi_0 u}_{\Hilbertzero}^2,
\eeqs
for all $\varepsilon>0$, 
so that, if \eqref{eq:GardingPhash} holds, then
\beqs
\Re \big\langle \Phash v, v\big\rangle_{\Hilbert^*\times \Hilbert}
\geq C_1'(1+\varepsilon)^{-1} \N{v}^2_\Hilbert - \Big[ 
C_1'(1+\varepsilon^{-1})(1+ \varepsilon)^{-1}+C_2'
\Big]\N{\Pi_0 v}^2_{\Hilbertzero },
 \eeqs
 and \eqref{eq:GardingPhash1} follows since $\Pi_0:\Hilbertzero\to\Hilbertzero$ is bounded.

We therefore now prove \eqref{eq:GardingPhash}. By the coercivity of $\Phash$ on $\Pi_1 \Hilbert $ \eqref{eq:Phashcoercivity2}, the boundedness of $\Phash:\Hilbert\to \Hilbert^*$, and \eqref{eq:simple_norm2}, 
\begin{align*}
\Re\langle \Phash v,v\rangle_{\Hilbert^*\times \Hilbert} &= \Re \big\langle \Phash \Pi_1 v, \Pi_1 v\big\rangle_{\Hilbert^*\times \Hilbert} + \Re \big\langle \Phash \Pi_0 v,\Pi_1 v\big\rangle_{\Hilbert^*\times \Hilbert} \\
&\qquad \qquad+ \Re \big\langle \Phash \Pi_1 v, \Pi_0 v\big\rangle_{\Hilbert^*\times \Hilbert} + \Re \big\langle \Phash \Pi_0 v, \Pi_0 v\big\rangle_{\Hilbert^*\times \Hilbert},\\
&\geq C_3 \N{\Pi_1 v}^2_\Hilbert  - C_4 \N{\Pi_0 v}_\Hilbert  \N{\Pi_1 v}_\Hilbert  - C_5 \N{\Pi_0 v}^2_\Hilbert ,\\
&=C_3 \N{\Pi_1 v}^2_\Hilbert  - C_4 \N{\Pi_0 v}_{\Hilbertzero } \N{\Pi_1 v}_\Hilbert  - C_5 \N{\Pi_0 v}^2_{\Hilbertzero },
\end{align*}
and \eqref{eq:GardingPhash} follows from the inequality \eqref{eq:PeterPaul}.
\epf

\subsection{$\smoother=\psi(\eta ^{-1}\cP)$ increases regularity}

The main result of this subsection is the following.

\ble[$\smoother$ increases regularity]\label{lem:Ssmooth}
Suppose that Assumption \ref{ass:2} holds  for some $\newell \in\mathbb{Z}^+$ and spaces $\cZ^j, j=1,\ldots,\newell+1$. 
Then there exists $C>0$ such that 
\beqs
\N{\smoother}_{\Pi_1 \Hilbertzero  \to \Pi_1\cZ^j} \leq C \quad\tfor j=0,\ldots, \newell-1.
\eeqs
\ele

To prove Lemma \ref{lem:Ssmooth}, 
we first combine the regularity assumptions \eqref{eq:er1} and \eqref{eq:P2reg}.

\ble\label{lem:ercP}
Suppose that Assumption \ref{ass:2} holds  for some $\newell \in\mathbb{Z}^+$ and spaces $\cZ^j, j=1,\ldots,\newell+1$. 
Then there exists $C>0$ such that for $j=2,\ldots, \newell+1$, 
\beq\label{eq:report1}
\N{\Pi_1 u}_{\cZ^j} \leq C \Big( 
\N{\Pi_1 u}_{\cZ^{j-2}} + 
\N{\Pi_1\eta ^{-1}\cP \Pi_1 u}_{\cZ^{j-2}}\Big)
\quad\tfa u \in \Hilbert. 
\eeq
\ele

\bpf
In preparation for applying \eqref{eq:er1} with $\mathsf{D}= \Re\Pone$, observe that, by \eqref{eq:matrices},
\begin{align*}
 \big\langle\Re\Pone \Pi_1 u, 
v\big\rangle_{\Hilbert^*\times\Hilbert}
= \big\langle\Pi^*_1
 (\Re\Pone_{11}) \Pi_1 u, 
v\big\rangle_{\Hilbert^*\times\Hilbert}
&= \big\langle\Pi^*_1
 \big(\cP+ \Re\Ptwo_{11}\big) \Pi_1 u, 
v\big\rangle_{\Hilbert^*\times\Hilbert}
\end{align*}
We now claim that 
\beq\label{eq:claim_sun1} 
\sup_{v\in \Pi_1\Hilbert , \| \iota  v\|_{(\cZ^{j-2})^*}=1} \big| 
\big\langle\Pi^*_1
 \big(\cP+ \Re\Ptwo_{11}\big) \Pi_1 u, 
v\big\rangle_{\Hilbert^*\times\Hilbert}
\big|
\leq  \big\|\eta^{-1}\Pi^*_1 \big(\cP+ \Re\Ptwo_{11}\big) \Pi_1 u\big\|_{\cZ^{j-2}},
\eeq
so that 
\begin{align*}
 \sup_{v\in \Pi_1\Hilbert , \| \iota v\|_{(\cZ^{j-2})^*}=1} \big| \big\langle \Re\Pone \Pi_1 u, 
v\big\rangle_{\Hilbert^*\times\Hilbert}\big|\leq \big\|\eta^{-1}\Pi^*_1 \big(\cP+ \Re\Ptwo_{11}\big) \Pi_1 u\big\|_{\cZ^{j-2}},
\end{align*}
and thus, by  \eqref{eq:er1} with $\mathsf{D}= \Re\Pone$, 
\beq\label{eq:Sat1}
\N{\Pi_1 u}_{\cZ^j} \leq C \Big(
\N{\Pi_1 u}_{\cZ^{j-2}} + 
\big\|\eta^{-1}\Pi^*_1 \big(\cP+ \Re\Ptwo_{11}\big) \Pi_1 u\big\|_{\cZ^{j-2}}\Big).
\eeq
We continue with the proof of \eqref{eq:report1} (using \eqref{eq:Sat1}), and then prove \eqref{eq:claim_sun1} at the end.
Now
\begin{align}\nonumber
\N{\eta^{-1}\Pi_1^*\Re \Ptwo_{11} \Pi_1 u }_{\cZ^{j-2}} &\leq 
\N{\eta^{-1}\Pi_1^*\iota}_{\cZ^{j-2}\to \cZ^{j-2}} \big\|\iota^{-1}\Re \Ptwo_{11}\big\|_{\cZ^{j-2}\to \cZ^{j-2}}\N{ \Pi_1 u }_{\cZ^{j-2}}\\
&\leq C \N{\eta^{-1}\Pi_1^*\iota}_{\cZ^{j-2}\to \cZ^{j-2}}\N{ \Pi_1 u }_{\cZ^{j-2}}
\label{eq:Sat2a}
\end{align}
by \eqref{eq:P2reg} with $\mathsf{E} = \Re \Ptwo $.
By Part (ii) of Lemma \ref{lem:eta}, $\eta^{-1}: \Pi_1^*\Hilbertzero^*\to \Pi_1\Hilbertzero$ is given by  $\eta^{-1}=\Pi_1 \iota^{-1}\Pi_1^*$ (since the inverse of the inclusion map is the projection map and vice versa). Therefore, by \eqref{eq:sun2} and \eqref{eq:sun1},
\beqs
 \N{\eta^{-1}\Pi_1^*\iota}_{\cZ^{j-2}\to \cZ^{j-2}} =  \N{\Pi_1\iota^{-1}\Pi_1^*\iota}_{\cZ^{j-2}\to \cZ^{j-2}}\leq C,
 \eeqs
and combining this with \eqref{eq:Sat2a} we obtain that 
\beq\label{eq:Sat2}
\N{\eta^{-1}\Pi_1^*\Re \Ptwo_{11} \Pi_1 u }_{\cZ^{j-2}} \leq C\N{ \Pi_1 u }_{\cZ^{j-2}}.
\eeq
Now $\eta^{-1} = \Pi_1 \eta^{-1} = \Pi_1 \eta^{-1}\Pi_1^*$ (either by the formula $\eta^{-1}=\Pi_1 \iota^{-1}\Pi_1^*$, or just the fact that  $\eta^{-1}: \Pi_1^*\Hilbertzero^*\to \Pi_1 \Hilbertzero$),
so that 
\beq\label{eq:iotamoving}
\eta^{-1}\Pi_1^*\cP \Pi_1= \Pi_1 \eta^{-1} \Pi_1^*\cP\Pi_1 =\Pi_1 \eta^{-1} \cP\Pi_1;
\eeq
the result \eqref{eq:report1} then follows from combining \eqref{eq:iotamoving} with \eqref{eq:Sat1} and \eqref{eq:Sat2}. 

It therefore remains to prove \eqref{eq:claim_sun1}.
By Corollary \ref{cor:Friedrichs}, $\cP: \Pi_1 \Hilbertzero\to \Pi_1^*\Hilbertzero^*$ and, since $\Ptwo: \Hilbertzero\to \Hilbertzero^*$ (by Assumption \ref{ass:1}), $\Ptwo_{11}:=\Pi_1^* \Ptwo \Pi_1 : \Pi_1 \Hilbertzero\to \Pi_1^*\Hilbertzero^*$. Therefore, for $v\in \Pi_1\Hilbert \subset \Pi_1 \Hilbertzero$, 
\begin{align}\label{eq:claim_sun2}
\big\langle  \Pi_1^*\big(\cP+ \Re\Ptwo_{11}\big) \Pi_1 u, 
v\big\rangle_{\Hilbert^*\times\Hilbert}
=
\big\langle \Pi_1^* \big(\cP+ \Re\Ptwo_{11}\big)\Pi_1 u, 
v\big\rangle_{\Pi_1^*\Hilbertzero^*\times\Pi_1\Hilbertzero}.
\end{align}
Now, by (in this order)  Lemma \ref{lem:eta_inverse}, the formula $\eta = \Pi_1^*\iota \Pi_1$, and the fact that $\Pi_1 \eta^{-1} =\eta^{-1}$ (since $\eta^{-1}: \Pi_1^*\Hilbertzero^* \to \Pi_1 \Hilbertzero$), for $v\in \Pi_1\Hilbert \subset \Pi_1 \Hilbertzero$
\begin{align}\nonumber
\big\langle \Pi^*_1  \big(\cP+ \Re\Ptwo_{11}\big) \Pi_1 u, 
v\big\rangle_{\Pi_1^*\Hilbertzero^*\times\Pi_1\Hilbertzero}
&
=\big\langle \eta^{-1}\Pi^*_1  \big(\cP+ \Re\Ptwo_{11}\big) \Pi_1 u, 
\eta v\big\rangle_{\Pi_1^*\Hilbertzero^*\times\Pi_1\Hilbertzero}\\ \nonumber
&=\big\langle \eta^{-1}\Pi^*_1  \big(\cP+ \Re\Ptwo_{11}\big) \Pi_1 u, 
\Pi_1^*\iota \Pi_1 v\big\rangle_{\Pi_1^*\Hilbertzero^*\times\Pi_1\Hilbertzero}\\ \label{eq:claim_sun3}
&=\big\langle \eta^{-1}\Pi^*_1  \big(\cP+ \Re\Ptwo_{11}\big) \Pi_1 u, 
\iota v\big\rangle_{\Pi_1^*\Hilbertzero^*\times\Pi_1\Hilbertzero}.
\end{align}
The claimed bound \eqref{eq:claim_sun1} then follows from the combination of \eqref{eq:claim_sun2} and \eqref{eq:claim_sun3}.
\epf

The final result we need to prove Lemma \ref{lem:Ssmooth} is the following.

\ble\label{lem:Smapez}
$\smoother: \Pi_1\Hilbertzero\to \Hilbert$.
\ele

\bpf
By its definition \eqref{eq:S}, $\smoother:=\psi(\eta^{-1}\cP):\Pi_1 \Hilbertzero \to \Pi_1 \Hilbertzero$. 
Given $v\in\Pi_1 \Hilbertzero$, to bound $\|\smoother v\|_{\Hilbert}$ 
it is sufficient to prove that 
\beq\label{eq:Sat3}
\big| \big\langle \cP \psi(\eta^{-1}\cP)v, \psi(\eta^{-1}\cP)v\big\rangle_{\Hilbert^*\times \Hilbert}\big|
\leq C \N{v}_{\Hilbertzero}^2 \quad\tfa v \in \Pi_1\Hilbertzero
\eeq 
by the G\aa rding inequality \eqref{eq:GardingP111}.`
By \eqref{eq:Qsesqui} and the fact that $\eta^{-1}$ is the identification $\Pi_1^*\Hilbertzero^* \to \Pi_1\Hilbertzero$,
\begin{align*}
\big\langle \cP \psi(\eta^{-1}\cP)v, \psi(\eta^{-1}\cP)v\big\rangle_{\Hilbert^*\times \Hilbert}
&=
\big\langle \cP \psi(\eta^{-1}\cP)v, \psi(\eta^{-1}\cP)v\big\rangle_{\Hilbertzero^*\times \Hilbertzero}\\
&= \big(\eta^{-1} \cP \psi(\eta^{-1}\cP)v, \psi(\eta^{-1}\cP)v\big)_{\Hilbertzero}.
\end{align*}
The bound \eqref{eq:Sat3} then follows from Lemma \ref{lem:SPpowers}.
\epf

\bpf[Proof of Lemma \ref{lem:Ssmooth}]

We apply Lemma \ref{lem:ercP} with $u=\smoother \Pi_1 v= \psi(\eta^{-1} \cP ) \Pi_1 v$ for arbitrary $v\in \Hilbertzero$; observe that this is allowed since $u\in \Hilbert$ by Lemma \ref{lem:Smapez}. 
Since $\Pi_1 \psi(\eta^{-1} \cP  )= \psi(\eta^{-1} \cP  )$ (since $\psi(\eta^{-1} \cP  )$ is defined using the functional calculus on $\Pi_1 \Hilbertzero$), this application of Lemma \ref{lem:ercP} implies that 
\begin{align}
\N{\Pi_1 
\psi(\eta^{-1} \cP  )\Pi_1}_{\Hilbertzero \to
\cZ^j} 
&\leq C \Big( \N{\Pi_1
\psi(\eta^{-1} \cP  )\Pi_1}_{\Hilbertzero \to 
\cZ^{j-2}} + 
\N{\Pi_1\eta^{-1}
\cP \psi(\eta^{-1} \cP  )\Pi_1}_{\Hilbertzero  \to 
\cZ^{j-2}}\Big).
\label{eq:er_Pi1}
\end{align}
We now apply Lemma \ref{lem:ercP} with $u=
(\eta^{-1} \cP )^m\psi(\eta^{-1} \cP  )\Pi_1v$ for arbitrary $v\in \Hilbertzero$.
The proof that this $u\in \Hilbert$ is very similar to the proof of Lemma \ref{lem:Smapez}, using Lemma \ref{lem:SPpowers} -- the key points are that (i) any compactly supported function of $\eta^{-1} \cP $ is bounded $\Pi_1 \Hilbertzero\to \Pi_1 \Hilbertzero$, and (ii) the $\Hilbert$ norm essentially just adds another power of $\eta^{-1} \cP $ by the G\aa rding inequality \eqref{eq:GardingP111}). 

Lemma \ref{lem:ercP} and the fact that $\Pi_1 \eta^{-1} \cP = \eta^{-1} \cP $
(since $\eta^{-1}\cP : \Pi_1 \Hilbertzero\to  \Pi_1 \Hilbertzero$) 
 therefore imply that
\begin{align}\nonumber
&\N{
\Pi_1(\eta^{-1} \cP )^m\psi(\eta^{-1} \cP  )\Pi_1}_{\Hilbertzero \to \cZ^j}\\
&\quad \leq C \Big( \N{
 \Pi_1(\eta^{-1} \cP )^m\psi(\eta^{-1} \cP  )\Pi_1}_{\Hilbertzero \to  \cZ^{j-2}} + 
\N{\Pi_1
 (\eta^{-1} \cP )^{m+1} \psi(\eta^{-1} \cP  )\Pi_1}_{
\Hilbertzero  \to 
\cZ^{j-2}}\Big).\label{eq:er_Pi2}
\end{align}
The combination of \eqref{eq:er_Pi1} and \eqref{eq:er_Pi2} implies that \beqs
\N{\Pi_1
 \psi(\eta^{-1}\cP)\Pi_1}_{ \Hilbertzero \to \cZ^{\newell-1}} \leq C_\newell \sum_{j=0}^{\lceil (\newell-1)/2\rceil} 
\N{\Pi_1(\eta^{-1}\cP)^{j} \psi(\cP)\Pi_1}_{1\Hilbertzero  \to \Hilbertzero },
\eeqs
and the result then follows from Lemma \ref{lem:SPpowers}.
\epf

\subsection{Regularity of $\Rhash\Pi_1^*$}

\ble\label{lem:regRhash}
Suppose that Assumptions \ref{ass:1} and  \ref{ass:2} hold, the latter  for some $\newell \in\mathbb{Z}^+$ and spaces $\cZ^j, j=1,\ldots,\newell+1$. Then
\beqs
\big\| \Rhash \Pi_1^* \iota\big\|_{\cZ^{j-2}\to \cZ^j} \leq C \quad\tfor j=2,\ldots,\newell+1.
\eeqs
\ele

\bpf
The proof is similar to the proof of Lemma \ref{lem:regRs}, but it is simpler since it turns out that now $\Pi_0u=0$. 
Given $f\in \cZ^{j-2}$, let $u= \Rhash \Pi_1^* \iota f$ so that $\Phash u = \Pi_1^*\iota f$.
By the definition of $\Phash$ \eqref{eq:Phash}, $\Ptwo_{00} \Pi_0 u =0$; i.e., $\Pi^*_0 \Ptwo  \Pi_0 u=0$ by \eqref{eq:matrixnotation}, and thus $\Pi_0 u =0$ by \eqref{eq:P2}.
Therefore, for $f\in \cZ^{j-2}$, by \eqref{eq:key1}, the definition of $\Phash$ \eqref{eq:Phash}, and the fact that $\eta = \Pi_1^*\iota\Pi_1$ (by Part (ii) of Lemma \ref{lem:eta}), 
\begin{align*}
\big| \big\langle \Pone  \Pi_1 u ,\Pi_1 v\big\rangle_{\Hilbert\times\Hilbert^*}\big| 
&= \big| \big\langle \Pi_1^* \iota f +\Ptwo  u - \Pi_1^* \eta\smoother^2 \Pi_1 u , \Pi_1 v\big\rangle_{\Hilbertzero\times\Hilbertzero^*}\big|\\
&= \big| \big\langle \iota^{-1}\big(\Pi_1^*\iota  f +\Ptwo  u - \Pi_1^* \eta\smoother^2 \Pi_1 u\big), \iota \Pi_1 v\big\rangle_{\Hilbertzero\times \Hilbertzero^*}\big|\\
&\leq C \Big[ \big\|\iota^{-1}\Pi_1^*\iota\big\|_{\cZ^{j-2}\to \cZ^{j-2}}\N{f}_{\cZ^{j-2}} + \big\|\iota^{-1}\Ptwo\big\|_{\cZ^{j-2}\to \cZ^{j-2}} \N{u}_{\cZ^{j-2}}
\\
&\hspace{2cm}
+ \big\|\iota^{-1}\Pi_1^*\iota\big\|_{\cZ^{j-2}\to\cZ^{j-2}}\N{\Pi_1 S^2 \Pi_1}_{\Hilbertzero\to \cZ^{j-2}}\N{\Pi_1 u}_{\Hilbertzero}
\Big]\\
&\hspace{4cm}
\N{\iota \Pi_1 \iota^{-1}}_{(\cZ^{j-2})^*\to (\cZ^{j-2})^*} 
\N{\iota v}_{(\cZ^{j-2})^*}.
\end{align*}
Thus, by \eqref{eq:sun1}, \eqref{eq:P2reg}, and Lemma \ref{lem:Ssmooth},
\beqs
\big| \big\langle \Pone  \Pi_1 u ,\Pi_1 v\big\rangle_{\Hilbert\times\Hilbert^*}\big|\leq C \Big(
\N{f}_{\cZ^{j-2}} + \N{u}_{\cZ^{j-2}} + \N{\Pi_1 u}_{\Hilbertzero }
\Big) 
\N{\iota v}_{(\cZ^{j-2})^*}
\eeqs
 for $j=2,\ldots,\newell+1$. 
 Inputting this last inequality into \eqref{eq:er1} with $\mathsf{D}= \Pone $ and recalling that $u=\Pi_1u$, we see that
\begin{align}\label{eq:birthday1}
\N{ u }_{\cZ^j}=\N{\Pi_1 u }_{\cZ^j} &\leq C \Big( \N{\Pi_1 u}_{\Hilbertzero } + \N{f}_{\cZ^{j-2}} + \N{u}_{\cZ^{j-2}}\Big)\leq C \Big( \N{f}_{\cZ^{j-2}} + \N{u}_{\cZ^{j-2}} \Big)
\end{align}
 for $j=2,\ldots,\newell+1$. 
Now $\| \Rhash\|_{\Hilbertzero^* \to\Hilbertzero }\leq C$ by Lemma \ref{lem:Phash_inverse} and the fact that $\Hilbert \subset \Hilbertzero$ and $\Hilbertzero^* \subset \Hilbert ^*$. Therefore, by \eqref{eq:birthday1} with $j=2$, $\N{u }_{\cZ^2} \leq C \N{f }_{\Hilbertzero^*}$; the result then follows by combining this with \eqref{eq:birthday1}.
\epf

\subsection{Quasi-optimality of $\Pihash$}

Our final task in \S\ref{sec:Phash} is to prove quasi-optimality of the projection $\Pihash:\Hilbert \to \Hilbert _h$ defined by
\beq\label{eq:GogPhash}
\big\langle \Phash v_h , (I-\Pihash)w\big\rangle_{\Hilbert^*\times\Hilbert} 
=0 \quad \tfa v_h\in \Hilbert _h;
\eeq
i.e., 
\beq\label{eq:GogPhash2}
\big\langle (\Phash)^*(I-\Pihash)w, v_h\big\rangle_{\Hilbert^*\times\Hilbert} 
=0 \quad \tfa v_h\in \Hilbert _h.
\eeq

\ble[Quasi-optimality of $\Pihash$]\label{lem:Pihash}
If $\operator$ satisfies Assumptions \ref{ass:1} and \ref{ass:2}, the latter with $\newell=1$, then there exist $C_1,C_2>0$ such that if 
\beq\label{eq:Pihashqo}
\gamma_{\rm dv}(\operator^*) \leq C_1
\quad \text{ then } \quad
\big\| (I-\Pihash)v\big\|_{\Hilbert } \leq C_2 \big\|(I-\Pi_h)v\big\|_{\Hilbert } \quad\tfa v\in \Hilbert.
\eeq
\ele

\bpf
The idea is to apply Corollary \ref{cor:basic} with $\operator$ replaced by $(\Phash)^*$ (so that $\operator^*$ is replaced by $\Phash$). 
We now need to check that the assumptions of Corollary \ref{cor:basic} are satisfied with this replacement. 

We first claim that 
\beq\label{eq:Phash*}
(\Phash)^* = \Pone^* - \Ptwo^* + \Pi^*_1 \eta\smoother^2 \Pi_1.
\eeq
Indeed, by the definition of $\Phash$ \eqref{eq:Phash}, \eqref{eq:Phash*} holds if $\Pi^*_1 \eta\smoother^2 \Pi_1$ is self-adjoint, and this holds 
by Part (b) of Lemma \ref{lem:SPpowers} and the fact that $\eta$ is self-adjoint (by Lemma \ref{lem:eta_inverse}).

Now, since  $\Phash$ satisfies the G\aa rding inequality \eqref{eq:GardingPhash1},  
$(\Phash)^*$ satisfies Assumption \ref{ass:1} with $\Pone$ set to $\Pone^* + \Pi^*_1 \eta\smoother^2 \Pi_1$ (which has the same kernel as $\Pone$) and $\Ptwo$ set to $\Ptwo^*$. 
Because of the regularity property of $\smoother$ in Lemma \ref{lem:Ssmooth}, if $\operator$ satisfies Assumption \ref{ass:2} with $\newell=1$, then so does $(\Phash)^*$; i.e., the assumptions of Corollary \ref{cor:basic} are satisfied with $\operator$ replaced by $(\Phash)^*$.

The result then follows if we can show that (i) $\gamma_{\rm dv}((\Phash)^*)= \gamma_{\rm dv}(\operator^*)$, and (ii) 
$\| (\Phash)^{-1}\Pi_1^*\|_{\Hilbertzero\to \Hilbertzero}\leq C$. 
Point (ii) is satisfied by Lemma \ref{lem:Phash_inverse} since $\Hilbert\subset \Hilbertzero\subset \Hilbert^*$. To show Point (i), observe that the projections $\Pi_0$ and $\Pi_1$ are now defined with $\Ptwo$ replaced by $\Ptwo^*$, and the analogue of \eqref{eq:Pi+P2} is now 
\beqs
\big\langle \Ptwo ^*(I-\Pihash)w, v_h\big\rangle =0 \quad\tfa v_h \in \Hilbert _h \cap \Ker \Pone
\eeqs
(this follows from \eqref{eq:GogPhash2} since $(\Phash)^* v_h =-\Ptwo^* v_h$ for $v_h\in \Ker \Pone $ by \eqref{eq:Phash*}). 
By \eqref{eq:gamma_dv}, $\gamma_{\rm dv}((\Phash)^*)= \gamma_{\rm dv}(\operator^*)$ and the proof is complete.
\epf

\section{Proof of Theorem \ref{thm:abs1} (the main abstract theorem)}\label{sec:abs_proof}

As noted below the statement of Theorem \ref{thm:abs1} the relative-error bound \eqref{eq:relative_error_abs} follows from the error bound \eqref{eq:qo_abs} and 
the regularity result of Lemma \ref{lem:oscil}. 

We now use a duality argument involving $\Phash$ to prove the error bound \eqref{eq:qo_abs}.

\subsection{Reducing bounding the Galerkin error to bounding $\|\smoother\Pi_1 (u-u_h)\|_{\Hilbertzero }$}

The following lemma is an improved version of Lemma \ref{lem:kernel_basic} (due to the presence of $\smoother$ on the right-hand side).

\ble[Galerkin quasi-optimality, modulo a norm of the error involving $\smoother$]\label{lem:plant1}
Suppose that $\operator$ satisfies Assumption \ref{ass:1}. Given $u\in \Hilbert$, assume that the solution 
$u_h\in \Hilbert_h$ of \eqref{eq:Gog} exists. Then  there exists $C_1, C_2>0$ such that 
\beq\label{eq:dark1}
\big(1 - C_1 \gamma_{\rm dv}(\operator)\big) \N{ u-u_h}_{\Hilbert } \leq C_2 \Big(\N{(I-\Pi_h)u}_{\Hilbert }+ \N{\smoother\Pi_1(u-u_h)}_{\Hilbertzero }\Big) \quad\tfa v\in \Hilbert .
\eeq
\ele

\bpf
We first argue exactly as at the start of Lemma \ref{lem:kernel_basic}.
By the triangle inequality, \eqref{eq:simple_norm2}, and \eqref{eq:Pi0error}, 
\begin{align*}
\N{u-u_h}_\Hilbert  &\leq  \N{\Pi_0(u-u_h)}_{\Hilbertzero }+\N{\Pi_1(u-u_h)}_\Hilbert \\
&\leq  C \Big( \N{(I-\Pi_h)u}_\Hilbert  + \gamma_{\rm dv}(\operator) \N{u-u_h}_{\Hilbert }\Big)+\N{\Pi_1(u-u_h)}_\Hilbert ;
\end{align*}
i.e., 
\beq\label{eq:coffee5v2}
\big(1 - C \gamma_{\rm dv}(\operator)\big) \N{ u-u_h}_{\Hilbert } \leq  C \N{(I-\Pi_h)u}_\Hilbert +\N{\Pi_1(u-u_h)}_\Hilbert .
\eeq
We now claim that it is
sufficient to prove the bound 
\begin{align}\nonumber
&\N{\Pi_1(u-u_h)}_\Hilbert \\
&\qquad\leq \varepsilon \N{u-u_h}_\Hilbert  +C \varepsilon^{-1} \Big(
\N{(I-\Pi_h)u}_\Hilbert + \N{\smoother\Pi_1(u-u_h)}_{\Hilbertzero }
+\N{\Pi_0(u-u_h)}_{\Hilbert }
\Big)
\label{eq:STP1}
\end{align}
(note that \eqref{eq:STP1} is identical to \eqref{eq:STP1_basic} apart from the $\smoother$ multiplying $\Pi_1(u-u_h)$ on the right-hand side). 
Indeed, inputting \eqref{eq:STP1} into \eqref{eq:coffee5v2} and using again \eqref{eq:Pi0error}, we find \eqref{eq:dark1}.

Now, by coercivity of $\Phash= \operator + \Pi^*_1 S^2 \Pi_1$ on $\Pi_1 \Hilbert $ \eqref{eq:Phashcoercivity2}, 
\beq\label{eq:Friday2}
\N{\Pi_1(u-u_h)}^2_\Hilbert  \leq \Re
 \big\langle \operator \Pi_1(u-u_h), \Pi_1 (u-u_h)
\big\rangle
+ \N{ S \Pi_1(u-u_h)}^2_{\Hilbertzero}
\eeq
(compare to \eqref{eq:Friday2_basic}). 
The arguments after \eqref{eq:Friday2_basic} then show that 
\begin{align*}
\N{\Pi_1(u-u_h)}^2_{\Hilbert } &\leq 
\varepsilon \N{u-u_h}_{\Hilbert }^2
\\
&\quad+ C \Big(
\varepsilon^{-1} \N{(I-\Pi_h)u}^2_{\Hilbert } 
+ \varepsilon^{-1} \N{\Pi_0 (u-u_h)}^2_{\Hilbertzero }+ 
\N{\smoother\Pi_1(u-u_h)}_{\Hilbertzero }^2
\Big)
\end{align*}
(compare to \eqref{eq:plant1});  
this implies \eqref{eq:STP1} and the proof is complete.
\epf

\subsection{Duality argument using $\Phash$ to bound 
$\|\smoother\Pi_1 (u-u_h)\|_{\Hilbertzero }$}\label{sec:final_duality}

\ble\label{lem:dualityL2}
Suppose that $\operator$ satisfies Assumption \ref{ass:1}. Given $u\in \Hilbert$, assume that the solution 
$u_h\in \Hilbert_h$ of \eqref{eq:Gog} exists. 
Suppose further that the projection $\Pihash$ \eqref{eq:GogPhash} is well-defined.
Then  there exists $C_1, C_2>0$ such that 
\begin{align*}\nonumber
&\bigg(
1- C_1 \big\|(I-\Pi_h) \Rhash \Pi_1^* \eta\smoother \Pi_1\big\|_{\Hilbertzero \to \Hilbert } \big\|(I-\Pihash) \Rs\Pi_1^* \eta\smoother \Pi_1\big\|_{\Hilbertzero \to \Hilbert }
\bigg)
\N{\smoother\Pi_1(u-u_h)}_{\Hilbertzero } \\
&\qquad\leq C_2 \big\|(I-\Pi_h)\Rs\Pi_1^* \eta\smoother \Pi_1\big\|_{\Hilbertzero \to \Hilbert } \N{(I-\Pi_h)u}_{\Hilbert }.
\end{align*}
\ele

Combining Lemmas \ref{lem:plant1} and \ref{lem:dualityL2} 
 immediately gives the 
following result. 

\begin{lemma}[The main abstract result without using regularity of $\Rs$ or $\Rhash$]\label{thm:abs2}

\

\noi Suppose that $\operator$ satisfies Assumption \ref{ass:1}. Given $u\in \Hilbert$, assume that the solution 
$u_h\in \Hilbert_h$ of \eqref{eq:Gog} exists. 
Suppose further that the projection $\Pihash$ \eqref{eq:GogPhash} is well-defined.
Then  there exists $C_1, C_2, C_3, C_4>0$ such that 
\begin{align*}
&\big(1- C_1 \gamma_{\rm dv}(\operator)\big)\times\\
&\quad
\bigg(
1- C_2 \big\|(I-\Pi_h) \Rhash \Pi_1^* \eta\smoother \Pi_1\big\|_{\Hilbertzero \to \Hilbert } \big\|(I-\Pihash) \Rs\Pi_1^* \eta\smoother \Pi_1\big\|_{\Hilbertzero \to \Hilbert }
\bigg)
\N{u-u_h}_{\Hilbert } \\
&\qquad 
\leq C_3\bigg(
1- C_2 \big\|(I-\Pi_h) \Rhash \Pi_1^* \eta\smoother \Pi_1 \big\|_{\Hilbertzero \to \Hilbert } \big\|(I-\Pihash) \Rs\Pi_1^* \eta\smoother \Pi_1\big\|_{\Hilbertzero \to \Hilbert } 
\\
&\hspace{6cm}+ C_4  \big\|(I-\Pi_h)\Rs\Pi_1^* \eta\smoother \Pi_1\big\|_{\Hilbertzero \to \Hilbert }\bigg)
\N{(I-\Pi_h)u}_{\Hilbert }.
\end{align*}
That is, if
\beq\label{eq:fishchips1}
\gamma_{\rm dv}(\operator) \quad\tand\quad 
\big\|(I-\Pi_h) \Rhash \Pi_1^* \eta\smoother \Pi_1 \big\|_{\Hilbertzero \to \Hilbert } \big\|(I-\Pihash) \Rs\Pi_1^* \eta\smoother \Pi_1\big\|_{\Hilbertzero \to \Hilbert }
\eeq
are both sufficiently small, then 
\beq\label{eq:fishchips2}
\N{u-u_h}_{\Hilbert } \leq C\Big( 1 + \big\|(I-\Pi_h)\Rs\Pi_1^* \eta\smoother \Pi_1\big\|_{\Hilbertzero \to \Hilbert }\Big)
\N{(I-\Pi_h)u}_{\Hilbert }.
\eeq
\end{lemma}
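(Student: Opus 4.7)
The plan is to prove Lemma~\ref{thm:abs2} by directly combining Lemma~\ref{lem:plant1} (which bounds $\|u-u_h\|_\Hilbert$ in terms of $\|(I-\Pi_h)u\|_\Hilbert$ and $\|\smoother \Pi_1 (u-u_h)\|_{\Hilbertzero}$) with Lemma~\ref{lem:dualityL2} (which supplies a bound on the smoothed error term $\|\smoother \Pi_1 (u-u_h)\|_{\Hilbertzero}$ in terms of $\|(I-\Pi_h)u\|_\Hilbert$). Since the statement of Lemma~\ref{thm:abs2} is simply the algebraic consequence of chaining these two estimates, no new analytic input is required: the full structure of the duality argument has already been absorbed into Lemma~\ref{lem:dualityL2} via $\Phash$, and the G\aa rding-type step into Lemma~\ref{lem:plant1}.

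Concretely, I would write
\[
\alpha \eq 1 - C_1 \gamma_{\rm dv}(\operator), \qquad
\beta \eq 1- C_2 \big\|(I-\Pi_h) \Rhash \Pi_1^* \eta\smoother \Pi_1\big\|_{\Hilbertzero \to \Hilbert}\big\|(I-\Pihash) \Rs\Pi_1^* \eta\smoother \Pi_1\big\|_{\Hilbertzero \to \Hilbert},
\]
and $\delta \eq \|(I-\Pi_h)\Rs\Pi_1^* \eta\smoother \Pi_1\|_{\Hilbertzero \to \Hilbert}$. Lemma~\ref{lem:plant1} reads $\alpha \|u-u_h\|_\Hilbert \le C\big( \|(I-\Pi_h)u\|_\Hilbert + \|\smoother \Pi_1 (u-u_h)\|_{\Hilbertzero}\big)$, and Lemma~\ref{lem:dualityL2} reads $\beta \|\smoother \Pi_1 (u-u_h)\|_{\Hilbertzero} \le C \delta \|(I-\Pi_h)u\|_\Hilbert$. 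Multiplying the first inequality through by $\beta$ and substituting the second yields
\[
\alpha\beta \, \|u-u_h\|_\Hilbert \;\le\; C\bigl(\beta + \delta\bigr) \|(I-\Pi_h)u\|_\Hilbert,
\]
which is precisely the first displayed estimate of Lemma~\ref{thm:abs2}.

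For the second assertion, I would note that the constants $C_1,C_2$ in the definitions of $\alpha$ and $\beta$ can be chosen so that if $\gamma_{\rm dv}(\operator)$ and the product in~\eqref{eq:fishchips1} are both below explicit thresholds (e.g.\ below $1/(2C_1)$ and $1/(2C_2)$ respectively), then $\alpha \ge 1/2$ and $\beta \ge 1/2$. Dividing through by $\alpha\beta$ and using $\beta \le 1$ then gives~\eqref{eq:fishchips2} with a new constant $C$. There is no real obstacle here: the only subtlety is the order of quantifiers---one must check that the constants provided by Lemmas~\ref{lem:plant1} and~\ref{lem:dualityL2} (and in turn by Assumption~\ref{ass:1} and the definition of $\Phash$) are independent of $\Hilbert_h$ and of $u$, so that the smallness thresholds on $\gamma_{\rm dv}(\operator)$ and on the product in~\eqref{eq:fishchips1} can be fixed once and for all. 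This is automatic from the statements already proved.

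In short, Lemma~\ref{thm:abs2} is a bookkeeping corollary: all of the work has been done in Lemmas~\ref{lem:plant1} and~\ref{lem:dualityL2}, and the only step is the substitution $\|\smoother\Pi_1(u-u_h)\|_{\Hilbertzero} \mapsto C\beta^{-1}\delta\,\|(I-\Pi_h)u\|_\Hilbert$ followed by absorbing the $\alpha$ and $\beta$ factors under the smallness hypotheses. I expect no essential difficulty; the only care needed is to present the two forms of the conclusion (the raw product inequality and the cleaner form~\eqref{eq:fishchips2}) so that the reader sees they follow from the same two-line combination.
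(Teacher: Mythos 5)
Your proposal is correct and is essentially the paper's own argument: the paper proves Lemma \ref{thm:abs2} precisely by chaining Lemma \ref{lem:plant1} with Lemma \ref{lem:dualityL2}, exactly the two-line substitution and absorption under the smallness hypotheses that you describe.
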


\bpf[Proof of Lemma \ref{lem:dualityL2}]
By the definition of $\eta$ \eqref{eq:eta}, the definition of $\Rs:\Hilbert^*\to \Hilbert$, 
Part (b) of Lemma \ref{lem:SPpowers}, 
Galerkin orthogonality \eqref{eq:Gog}, 
the definition of $\Phash$ \eqref{eq:Phash}, and Galerkin orthogonality for $\Phash$ \eqref{eq:GogPhash},
\begin{align}\nonumber
\N{\smoother \Pi_1(u-u_h)}^2_{\Hilbertzero } 
&=\big\langle \smoother\Pi_1(u-u_h), \eta \smoother\Pi_1 (u-u_h)\big\rangle_{\Hilbertzero\times \Hilbertzero^*},\\ \nonumber
&= \big\langle u-u_h , \Pi_1^* \smoother^*\eta \smoother\Pi_1 (u-u_h)\big\rangle_{\Hilbertzero\times\Hilbertzero^*},\\ \nonumber
&= \big\langle \operator(u-u_h) , \Rs \Pi_1^* \eta\smoother^2\Pi_1 (u-u_h)\big\rangle_{\Hilbert^*\times\Hilbert},\\ \nonumber
&= \big\langle \operator(u-u_h) , (I-\Pihash)\Rs \Pi_1^* \eta\smoother^2 \Pi_1 (u-u_h)\big\rangle_{\Hilbert^*\times\Hilbert},\\ \nonumber
&= \big\langle \Phash(I-\Pi_h)u , (I-\Pihash)\Rs \Pi_1^* \eta\smoother^2\Pi_1 (u-u_h)\big\rangle_{\Hilbert^*\times\Hilbert}, \\ \nonumber
&\qquad- \big\langle\Pi_1^* \eta\smoother^2 \Pi_1(u-u_h), ( I- \Pihash) \Rs \Pi_1^*\eta\smoother^2\Pi_1 (u-u_h)
\big\rangle_{\Hilbert^*\times\Hilbert},\\ \nonumber
&\leq C \N{(I-\Pi_h)u}_\Hilbert  \big\|(I-\Pihash)\Rs \Pi_1^*\eta \smoother\Pi_1\big\|_{\Hilbertzero \to \Hilbert }\N{ \smoother\Pi_1 (u-u_h)}_{\Hilbertzero } 
\\&\qquad+ \big|\big\langle
\Pi_1^* \eta\smoother^2 \Pi_1(u-u_h), ( I- \Pihash) \Rs \Pi_1^*\eta\smoother^2\Pi_1 (u-u_h)
\big\rangle\big|.\label{eq:hot1}
\end{align}

We now use a duality argument involving $\Phash$ to bound the final term. 
By \eqref{eq:GogPhash}, for $\phi \in \Hilbertzero^*$ and $w\in \Hilbert$,
\begin{align*}\nonumber
\big\langle \Pi_1^* \phi, (I-\Pihash)w\big\rangle_{\Hilbert^*\times\Hilbert}
&= \big\langle
\Phash \Rhash \Pi_1^*\phi, (I-\Pihash) w
\big\rangle_{\Hilbert^*\times\Hilbert}\\
&
 =\big\langle
\Phash (I-\Pi_h)\Rhash \Pi_1^*\phi, (I-\Pihash) w
\big\rangle_{\Hilbert^*\times\Hilbert},
\end{align*}
so that 
\beq\label{eq:hot2}
\big|\big\langle \Pi_1^* \phi, (I-\Pihash)w\big\rangle \big|
\leq C\big\| (I-\Pi_h)\Rhash \Pi_1^*\phi\big\|_\Hilbert \big\|(I-\Pihash) w\big\|_\Hilbert .
\eeq
We apply \eqref{eq:hot2} with $\phi=\eta\smoother^2 \Pi_1 (u-u_h)\in \Hilbertzero^*$ and $w= \Rs \Pi_1^*\eta\smoother^2 \Pi_1 (u-u_h)\in \Hilbert$ and combine it with \eqref{eq:hot1} to obtain
\begin{align*}
&\N{\smoother\Pi_1(u-u_h)}^2_{\Hilbertzero } \\
&\quad\leq C \N{(I-\Pi_h)u }_{\Hilbert } \big\| (I-\Pihash)\Rs \Pi_1^* \eta\smoother \Pi_1\big\|_{\Hilbertzero^* \to \Hilbert } \N{\smoother\Pi_1(u-u_h)}_{\Hilbertzero }\\
&\qquad+
C \big\|(I-\Pi_h) \Rhash \Pi_1^* \eta\smoother \Pi_1\big\|_{\Hilbertzero \to \Hilbert } \N{\smoother\Pi_1(u-u_h)}^2_{\Hilbertzero } \big\|(I-\Pihash) \Rs \Pi_1^*\eta\smoother \Pi_1\big\|_{\Hilbertzero \to \Hilbert },
\end{align*}
and the result follows.
\epf

\subsection{Proof of the error bound \eqref{eq:qo_abs}}

We now use Lemma \ref{thm:abs2} to prove the error bound \eqref{eq:qo_abs} under the condition that  the quantities in \eqref{eq:sufficiently_small} are sufficiently small.

By Lemma \ref{lem:Pihash} the projection $\Pihash$ is well-defined and satisfies \eqref{eq:Pihashqo}  if $\gamma_{\rm dv}(\operator^*)$ is sufficiently small. Therefore, the instances 
of $(I-\Pihash)$ in Lemma \ref{thm:abs2} can be replaced (up to constants) by $(I-\Pi_h)$. 

The result  \eqref{eq:qo_abs} then follows if we can show that
\beqs
\big\|\Rhash \Pi_1^* \eta\smoother \Pi_1 \big\|_{\Hilbertzero \to \cZ^{\newell+1}} \leq C 
\eeqs
and
\beqs
\big\|\Rs \Pi_1^* \eta\smoother \Pi_1 \big\|_{\Hilbertzero \to \cZ^{\newell+1}} \leq C \big(1 + \N{\Rs \Pi_1^* }_{\Hilbertzero^* \to \Hilbertzero}\big).
\eeqs
By the regularity property of $\smoother$ in Lemma \ref{lem:Ssmooth}, it is sufficient to prove that 
\beq\label{eq:scooter1a}
\big\|\Rhash \Pi_1^* \eta \Pi_1\big\|_{\cZ^{\newell-1} \to \cZ^{\newell+1}} \leq C 
\eeq
and
\beq\label{eq:scooter1b}
\N{\Rs \Pi_1^* \eta \Pi_1 }_{\cZ^{\newell-1}\to \cZ^{\newell+1}} \leq C \big(1 + \N{\Rs \Pi_1^* }_{\Hilbertzero^* \to \Hilbertzero}\big).
\eeq
By Part (b) of Lemma \ref{lem:eta},
$\Rhash \Pi_1^* \eta\Pi_1 =\Rhash \Pi_1^* \iota\Pi_1$ 
and 
$\Rs \Pi_1^* \eta\Pi_1 =\Rs \Pi_1^* \iota\Pi_1$. The bounds in \eqref{eq:scooter1a} and \eqref{eq:scooter1b} then follow from Lemmas \ref{lem:regRhash} and \ref{lem:regRs}, respectively, combined with \eqref{eq:sun2} (with $\Pi_0$ replaced by $\Pi_1$).

\section{Recap of the regularity result of Weber \cite{We:81}}\label{sec:Weber}

The following result is \cite[Theorem 2.2]{We:81}, where we observe that this result -- originally proved for real-valued coefficients -- immediately generalises to complex-valued coefficients.
 Recall the definitions of the piecewise spaces $\Hpw{j}(\Omega)$ \eqref{eq:pw_space} and the associated norm $\|\cdot\|_{\Hpwo{j}(\Omega)}$ \eqref{eq:pw_norm}.

\begin{theorem}[Regularity result for $\curl$ and $\dive$]\label{thm:Weber}
Suppose that $\zeta$ is a complex matrix-valued function on $\Omega$ satisfying $\Re \zeta \geq c>0$ (in the sense of quadratic forms).
Suppose further that, for some integer $\kappa\geq 1$, 
$\Omega$ is $C^{\kappa+1}$ with respect to the partition $\{\Omega_i\}_{i=1}^n$ (in the sense of Definition \ref{def:Crpartition}) and $\zeta \in C^\kappa(\Omega_j)$ for all $j=1,\ldots,n$. 

Then there exists $C>0$ such that, for all $0\leq \ell \leq \kappa-1$, if \emph{either} $u \times n= 0$ \emph{or} $(\zeta u)\cdot n=0$ on $\partial\Omega$ then
\beq\label{eq:Weber1}
\N{u}_{\Hpwo{\ell+1}(\Omega)} \leq C \Big( \N{u}_{L^2(\Omega)} + \N{\wn ^{-1}\curl u}_{\Hpwo{\ell}(\Omega)} + \N{ \wn ^{-1}\dive (\zeta u)}_{\Hpwo{\ell}(\Omega)}\Big).
\eeq
\end{theorem}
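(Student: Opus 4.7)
The plan is to follow the argument of Weber \cite{We:81}, verifying at each step that the classical real-coefficient proof extends to complex-valued $\zeta$ with $\Re \zeta \geq c > 0$. The proof proceeds by induction on $\ell$; the wavenumber $k$ appears only as a uniform rescaling of each derivative and does not interact with the regularity analysis, so I would carry out all estimates in the unscaled $H^\ell_{{\rm pw}}$ norms and then rescale.

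For the base case $\ell = 0$, I would work on each subdomain $\Omega_j$ separately. Since $\curl u \in L^2(\Omega)$ and $\dive(\zeta u) \in L^2(\Omega)$ globally, the tangential trace $u \times n$ and the normal trace $(\zeta u)\cdot n$ are each continuous across every interface $\Gamma_{i,j}$; combined with the hypothesis on $\partial \Omega$, this implies that $u|_{\Omega_j}$ satisfies, on each connected component of $\partial\Omega_j$, either vanishing tangential trace, vanishing normal trace of $\zeta u$, or a continuity condition inherited from a neighbouring subdomain. Locally flattening the boundary and using a partition of unity subordinate to the $\Gamma_{i,j}$, Weber's argument reduces the desired $H^1(\Omega_j)$ estimate to the Petrovski\u{\i}-ellipticity of the first-order system $u \mapsto (\curl u,\,\dive(\zeta u))$ combined with an integration-by-parts identity controlling $\int_{\Omega_j}|\nabla u|^2$ by $\int|\curl u|^2 + \int|\dive(\zeta u)|^2 + \int|u|^2$, plus boundary terms that vanish under the boundary condition.

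For the inductive step, I would assume the bound at level $\ell-1$ and deduce it at level $\ell$. Differentiating $\curl u = f_1$ and $\dive(\zeta u) = f_2$ on $\Omega_j$ by a tangential vector field $\partial_\tau$ yields equations of the form $\curl(\partial_\tau u) = \partial_\tau f_1 + R_1(u)$ and $\dive(\zeta \partial_\tau u) = \partial_\tau f_2 + R_2(u)$, where $R_1$ and $R_2$ involve only derivatives of $\zeta$ times at most one derivative of $u$, both controlled by the $C^\kappa$-smoothness of $\zeta$. Because $\partial_\tau$ preserves the boundary condition, the inductive hypothesis applies to $\partial_\tau u$ and yields control on all tangential derivatives of $u$ up to order $\ell+1$. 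The remaining pure-normal derivatives are recovered algebraically by rewriting $\curl u = f_1$ and $\dive(\zeta u) = f_2$ in a boundary-adapted orthonormal frame and solving for $\partial_n u$ in terms of tangential derivatives of $u$ and of the data.

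The main obstacle is extending Weber's coercivity estimate from real to complex $\zeta$. The sesquilinear form $v \mapsto \Re \int_{\Omega_j} \zeta \nabla v \cdot \overline{\nabla v}$ remains coercive on $\nabla v$ because $\Re \zeta \geq c$, and the imaginary cross-terms generated by integration by parts can be absorbed through Young's inequality at the cost of constants depending only on $\|\Im \zeta\|_{L^\infty}$ and $c$. Similarly, the principal symbol of the system $u\mapsto(\curl u,\dive(\zeta u))$ is elliptic precisely when $\Re \zeta$ is invertible, which holds by hypothesis. Consequently every structural element of Weber's proof survives, and the constant $C$ in \eqref{eq:Weber1} depends only on $c$, on the $C^\kappa$ norms of $\zeta$ on each $\overline{\Omega_j}$, and on the $C^{\kappa+1}$ geometry of $\{\Omega_j\}_{j=1}^n$.
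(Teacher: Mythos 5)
Your proposal is essentially correct in spirit, but it takes a genuinely different route from the paper's. The paper does not attempt to re-derive Weber's argument; it simply cites \cite[Theorem~2.2]{We:81} and then isolates the precise places in Weber's proof where $\zeta$ enters. Those are (a) difference-quotient arguments, which the paper observes go through verbatim for complex $\zeta$ since $\zeta$ is still invertible, and (b) the Helmholtz-type decomposition $F = \nabla f + F_2$ with $\dive(\zeta F_2)=0$ (Weber's Lemmas~3.4--3.5, quoted from \cite[Lemmas~3.8--3.9]{We:80}), which in Weber's original work is proved via $\zeta$-weighted $L^2$ projections that break down when $\zeta$ is not symmetric positive-definite. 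The paper then re-proves (b) by a short Lax--Milgram argument using exactly the coercivity $\Re\zeta \geq c$ that you also invoke. Your sketch instead aims at a from-scratch re-derivation by tangential differentiation plus algebraic recovery of normal derivatives, which is a standard elliptic-regularity route and would also work, but it is more work than needed and, importantly, it glosses over the one place where the extension to complex coefficients is actually non-trivial: the base-case upgrade of $u$ from $L^2$ to $H^1$ cannot be done by the integration-by-parts identity alone (that identity already requires $u\in H^1$), and it is precisely the Helmholtz-type decomposition that supplies the missing regularity. Your proposal gestures at Petrovski\u{\i}-ellipticity plus integration by parts for the base case without explaining how the $L^2\to H^1$ bootstrap is achieved, and your concluding claim that ``every structural element of Weber's proof survives'' is correct but would benefit from pinpointing the decomposition lemma as the item that needs replacing, and supplying a variational (Lax--Milgram) construction of the decomposition for complex $\zeta$, exactly as the paper does.
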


\bpf
The result for $\zeta$ real-valued and symmetric positive-definite and with norms not weighted with $\wn $ is \cite[Theorem 2.2]{We:81}. Repeating the proof but now weighting each derivative by $\wn ^{-1}$ gives the bound \eqref{eq:Weber1}. 

We now outline why the result holds for complex-valued $\zeta$ with $\Re \zeta \geq c>0$.
The proof of \cite[Theorem 2.2]{We:81}  begins by localising and mapping the boundary to a half-plane (using \cite[Lemma 3.1]{We:81}) -- this is unaffected by the change in assumptions on $\zeta$. 
The parts of the proof that depend on $\zeta$ then involve
\ben
\item difference-quotient arguments, and
\item the decomposition of an arbitrary $L^2$ vector field $F$ into $F_1+ F_2$, where $F_1 = \nabla f$ and $\dive (\zeta F_2)=0$, and \emph{either} $f \in H^1_0(\Omega)$ and $\zeta F_2 \in H(\dive;\Omega)$ \emph{or} $f\in H^1(\Omega)$ and $\zeta F_2 \in H(\dive;\Omega)$ with $(\zeta F_2)\cdot n=0$ on $\partial \Omega$ \cite[Lemmas 3.4 and 3.5]{We:81}.
\een
The arguments in Point 1 go through verbatim (noting that $\zeta$ is still invertible). The results in Point 2 are quoted from \cite[Lemmas 3.8 and 3.9]{We:80}, where they are proved using projections in the $L^2(\Omega)$ inner product weighted with $\zeta$. 
When $\zeta$ is complex valued, 
the results in Point 2 can be proved
via the following.
For the first result (when $f\in H^1_0(\Omega)$), given $F$, let $f\in H^1_0(\Omega)$ be the solution of the variational problem
\beq\label{eq:lastLM1}
\big( \zeta\nabla f,\nabla w\big)_{L^2(\Omega)}=\big( \zeta F,\nabla w\big)_{L^2(\Omega)}\quad\tfa w\in H^1_0(\Omega).
\eeq
When $\Re \zeta \geq c>0$, the solution of \eqref{eq:lastLM1} is unique by the Lax--Milgram lemma. Let $F_2: =F - \nabla f$, so that \eqref{eq:lastLM1} is the statement that $\dive(\zeta F_2)=0$.
For the second result (when $f\in H^1(\Omega)$), 
given $F$, let $f\in H^1(\Omega)$ be the solution of the variational problem
\beq\label{eq:lastLM2}
\big( \zeta\nabla f,\nabla w\big)_{L^2(\Omega)}=\big( \zeta F,\nabla w\big)_{L^2(\Omega)}\quad\tfa w\in H^1(\Omega).
\eeq
When $\Re \zeta \geq c>0$, the solution of \eqref{eq:lastLM2} (a Laplace Neumann problem) is unique up to constants, so that $F_2: =F - \nabla f$ is uniquely defined. Now \eqref{eq:lastLM2} for  $w\in H^1_0(\Omega)$ is the statement that $\dive(\zeta F_2)=0$ and \eqref{eq:lastLM2} for $w\in H^1(\Omega)$ implies that $n\cdot (\zeta F)=0$ by, e.g., \cite[Equation 3.33]{Mo:03}.
\epf

\section{Definition and properties of N\'ed\'elec finite elements}\label{sec:FEM}

\subsection{Curved tetrahedral mesh}\label{sec:mesh}

We consider a partition of $\Omega$ into a conforming mesh $\cT_h$ of
(curved) tetrahedral elements $K$ as in, e.g., \cite[Assumption 3.1]{MeSa:21}.
For $K \in \cT_h$ we denote by $\LF_K: \widehat K \to K$ the mapping
between the reference tetrahedron $\widehat K$ and the element $K$. We further
assume that the mesh $\cT_h$ is conforming with the partition $\{\Omega_i\}_{i=1}^n$ of $\Omega$ from Assumption \ref{ass:regularity}. This 
means that for each $K \in \cT_h$, there is a unique $i\in \{ 1,\ldots, n\}$ such that
$K \subset \overline{\Omega_i}$. 

\subsection{N\'ed\'elec finite element space}\label{sec:Nedelec}
Fix a polynomial degree $p \geq 1$.
Then, following \cite{Ne:80} (see also, e.g., \cite[Chapter 15]{ErGu:21}), we introduce the N\'ed\'elec
polynomial space
\begin{equation*}
\BN_p(\widehat K) = \BP_{p-1}(\widehat K) + \bx \times \BP_{p-1}(\widehat K),
\end{equation*}
where $\BP_s(\widehat K)$ consists of functions such that each component is a 
polynomial of degree $\leq s$ defined over $\widehat K$. 
(Note that in \cite[Chapter 15]{ErGu:21} the lowest-order elements correspond to $p=0$, whereas here they correspond to $p=1$.)
The associated approximation space is obtained by mapping the N\'ed\'elec polynomial space to the
mesh cells through a Piola mapping (see \eqref{eq:Piola_curl} below), leading to
\begin{equation*}
\Hilbert_h \eq
\left \{
\bv_h \in \BH_0(\ccurl,\Omega)
\,:\,
\big(D\LF_K\big)^T \left (\bv_h|_K \circ \LF_K \right ) \in \BN_p(\widehat K)
\quad
\tfa K \in \cT_h
\right \},
\end{equation*}
where $D\LF_K$ is the Jacobian matrix of $\LF_K$.

\begin{assumption}[Curved finite-element mesh]
\label{assumption_curved_fem}
The maps $\LF_K$ satisfy
\begin{equation}
\label{eq_mapK}
\|\partial^\alpha\LF_K\|_{L^\infty(\widehat{K})}
\leq
C 
L\left (\frac{h_K}{L}\right )^{|\alpha|}
\quad\tand\quad
\|(D\LF_K)^{-1}\|_{L^\infty(K)}
\leq
C h_K^{-1},
\end{equation}
for $1\leq |\alpha| \leq p+1$,  
where $h_K$ is the diameter of $K$.
\end{assumption}

Note that the bound \eqref{eq_mapK} with $|\alpha|=1$ corresponds to the mesh elements $K\in \CT_h$ being shape regular (as in, e.g., \cite[Equation 3.3]{MeSa:21}).

\subsection{High-order interpolation}\label{sec:interpolation}

\begin{theorem}[Interpolation results in $\Hilbert_h$]\label{thm:interpolation}
Given $\Omega$ (with diameter $L$) and $\cH_h$ 
there exists an interpolation operator $\CJ_h: 
Z^2 \to \Hilbert_h$ (with $Z^j$ defined by \eqref{eq:Zj})
and a constant $C$ 
such that for all $\newellfour\in \{1,\ldots,p\}$, 
 for all $K\in \cT_h$, and for all $v\in Z^{\newellfour+1}$, 
\begin{equation}\label{eq:interpolation1}
\|\bv - \CJ_h \bv\|_{L^2(K)}
\leq
C \left (\frac{\hK}{L} \right )^\newellfour
\sum_{j=1}^\newellfour
L^j \Big(|v|_{H^j(K)}+\hK|\curl v|_{H^j(K)}\Big)
\end{equation}
and
\begin{equation}\label{eq:interpolation2}
\|\curl(\bv - \CJ_h \bv)\|_{L^2(K)}
\leq
C \left (\frac{\hK}{L} \right )^\newellfour
\sum_{j=1}^\newellfour
L^j |\curl v|_{H^j(K)}.
\end{equation}
\end{theorem}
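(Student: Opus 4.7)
The plan is to construct $\CJ_h$ element-by-element from a fixed quasi-interpolation operator on the reference tetrahedron $\widehat K$, transported to each $K\in\cT_h$ via the covariant Piola mapping, and then to establish \eqref{eq:interpolation1}--\eqref{eq:interpolation2} by a Bramble--Hilbert estimate on $\widehat K$ followed by a careful scaling argument. For $v\in Z^2$ I would set
\begin{equation*}
(\CJ_h v)|_K \eq \big((D\LF_K)^{-T}\, \widehat{\CJ}\,\widehat v\big)\circ\LF_K^{-1}, \qquad \widehat v \eq (D\LF_K)^T\,\big(v|_K\circ\LF_K\big),
\end{equation*}
where $\widehat{\CJ}$ is a projection-based or smoothed quasi-interpolation operator onto $\BN_p(\widehat K)$ (e.g., of Sch\"oberl, Christiansen--Winther, or Demkowicz--Buffa type) that requires only the regularity available in $Z^2$, leaves $\BN_p(\widehat K)$ invariant, and commutes with $\curl$ in the de~Rham complex via $\curl\,\widehat{\CJ}=\widehat{\Pi}\,\curl$ for a matching Raviart--Thomas quasi-interpolant $\widehat{\Pi}$. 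Global $H_0(\curl,\Omega)$-conformity of $\CJ_h v$ then follows from the preservation of tangential traces across shared faces, and of vanishing tangential traces on $\partial\Omega$, under the covariant Piola transform.

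The first key step is the Bramble--Hilbert estimate on the reference element: for $\widehat w\in H^{\ell+1}(\widehat K)$ with $\curl\widehat w\in H^\ell(\widehat K)$ and $1\leq\ell\leq p$, the polynomial invariance of $\widehat{\CJ}$ on $\BN_p(\widehat K)$ together with a Deny--Lions argument yields
\begin{equation*}
\|\widehat w-\widehat{\CJ}\,\widehat w\|_{L^2(\widehat K)} \leq C\sum_{j=1}^\ell\Big(|\widehat w|_{H^j(\widehat K)}+|\curl\widehat w|_{H^{j-1}(\widehat K)}\Big),
\end{equation*}
while the commuting-diagram property reduces the error estimate for $\curl(\widehat w-\widehat{\CJ}\,\widehat w)$ to the corresponding Bramble--Hilbert bound for $\widehat{\Pi}$, giving
\begin{equation*}
\|\curl(\widehat w-\widehat{\CJ}\,\widehat w)\|_{L^2(\widehat K)} \leq C\sum_{j=1}^\ell|\curl\widehat w|_{H^j(\widehat K)}.
\end{equation*}

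The second key step is to transport these inequalities back to $K$ through the Piola identity
\begin{equation*}
\curl\big((D\LF_K)^{-T}\widehat w\circ\LF_K^{-1}\big)(x) = \frac{1}{\det D\LF_K(\widehat x)}\,D\LF_K(\widehat x)\,(\curl\widehat w)(\widehat x), \qquad \widehat x=\LF_K^{-1}(x),
\end{equation*}
the change of variables $dx=|\det D\LF_K|\,d\widehat x$, and the norm bounds in Assumption~\ref{assumption_curved_fem}, which (combined with shape regularity inherent in \eqref{eq_mapK}) also yield $|\det D\LF_K|\sim h_K^3$. The scaling from $\widehat K$ to $K$ then produces the factors of $(h_K/L)^\ell$ appearing in \eqref{eq:interpolation1}--\eqref{eq:interpolation2}, with the extra factor of $h_K$ multiplying $|\curl v|_{H^j(K)}$ in \eqref{eq:interpolation1} arising from the $D\LF_K/\det D\LF_K$ prefactor in the Piola identity for $\curl$.

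The main technical obstacle is the curved geometry: in the affine case only $D\LF_K$ is nontrivial, whereas here one must expand higher derivatives via Fa\`a di Bruno's formula when relating $H^j$-seminorms on $K$ and $\widehat K$, picking up contributions from $\partial^\alpha\LF_K$ and $\partial^\alpha(D\LF_K)^{-T}$ for $|\alpha|$ up to $p+1$. This is precisely the point at which the full strength of Assumption~\ref{assumption_curved_fem} -- controlling derivatives of $\LF_K$ up to order $p+1$ -- enters. The resulting bookkeeping is routine but lengthy and is carried out in \S\ref{app:interpolation}.
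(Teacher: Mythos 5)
Your high-level strategy -- transport to the reference element via the covariant Piola map, invoke a reference-element interpolation estimate plus Bramble--Hilbert, and pull back with careful scaling through Assumption~\ref{assumption_curved_fem} -- matches the paper's proof closely, including the commuting-diagram reduction of the $\curl$-estimate to a Raviart--Thomas bound and the scaling lemmas (your ``Fa\`a di Bruno'' bookkeeping for composed maps is exactly the content of the paper's Lemma on Sobolev norms of composed functions).

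The genuine gap is in the choice of operator. You replace the \emph{canonical} N\'ed\'elec/Raviart--Thomas interpolants with a ``projection-based or smoothed quasi-interpolation operator,'' citing Sch\"oberl, Christiansen--Winther, and Demkowicz--Buffa, and you claim an estimate of the form $\|\widehat w-\widehat{\CJ}\widehat w\|_{L^2(\widehat K)}\leq C\sum_{j=1}^\ell(|\widehat w|_{H^j}+|\curl\widehat w|_{H^{j-1}})$, requiring only $\curl\widehat w\in L^2$ at the bottom level. This combination is inconsistent. The Sch\"oberl and Christiansen--Winther operators, which do achieve $L^2$-type stability of this sort, are \emph{patch-based} and globally defined via mollification/averaging over overlapping element neighbourhoods -- they cannot be written as $(\LF_K^{\rm c})^{-1}\circ\widehat\CJ\circ\LF_K^{\rm c}$ acting one element at a time, so your element-local construction of $\CJ_h$ would not even define them. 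Conversely, the element-local operators (canonical, Demkowicz--Buffa projection-based) require more than $L^2$ regularity of $\curl\widehat w$ for the edge and face degrees of freedom to be well-defined, so they cannot deliver the $|\curl\widehat w|_{H^{j-1}}$-form of your reference estimate. You therefore cannot simultaneously have an element-local $\widehat\CJ$ and the quasi-interpolation-strength bound you write down.

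The resolution -- and the route the paper takes -- is that quasi-interpolation is unnecessary here: the theorem's domain is $Z^{\ell+1}\subset Z^2$, i.e.\ $v$ and $\curl v$ are piecewise $H^1$, which is exactly enough regularity for the \emph{canonical} N\'ed\'elec interpolant to be well-defined element-by-element and for the estimate of \cite[Theorem 3.14]{Hi:02}, namely $\|\widehat v-\widehat I^{\rm c}\widehat v\|_{L^2(\widehat K)}\leq C(|\widehat v|_{H^\ell(\widehat K)}+|\curl\widehat v|_{H^\ell(\widehat K)})$ (note $|\curl\widehat v|_{H^\ell}$, not $H^{\ell-1}$), to apply. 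The paper then uses the commuting diagram $\curl\circ I^{\rm c}_h=I^{\rm d}_h\circ\curl$ and a plain Bramble--Hilbert bound for $\widehat I^{\rm d}$ for \eqref{eq:interpolation2}, exactly as you outlined for the $\curl$ part. If you replace your $\widehat\CJ$ by the canonical $\widehat I^{\rm c}$ and use the \cite{Hi:02} estimate, your argument goes through and reproduces the paper's proof; as written, it does not.
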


Theorem \ref{thm:interpolation} is proved using Assumption \ref{assumption_curved_fem} and standard scaling arguments for curved elements in Appendix \ref{app:interpolation}.

Observe that \eqref{eq:interpolation2} implies that 
\beq\label{eq:commuting}
\text{ if } \,\,\curl \bv=0\,\, \text{ then }
\,\, \curl (\CJ_h \bv)= 0.
\eeq

\begin{corollary}[Best approximation result in $\Hilbert_h$]
Given $\Omega$,  $p\in \mathbb{N}$, and $k_0>0$, there exists $C>0$ such that the following is true. With $\Pi_h$ the orthogonal projection $\Hilbert\to\Hilbert_h$, for  all $\newellfour \in\{1,\ldots, p\}$ and for all $v\in \newZ^{\newellfour+1}$,
\begin{align}
\N{ (I- \Pi_h) v}_{H_\wn (\curl,\Omega)} 
&\leq C (\wn h )^\newellfour
\N{v}_{\newZ^{\newellfour+1}_\wn}.
\label{eq:interpolation}
\end{align}
\end{corollary}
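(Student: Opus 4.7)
The strategy is a standard best-approximation argument, reducing the claim to the element-wise interpolation estimates of Theorem~\ref{thm:interpolation} and then converting dimensional $(h_K,L)$-scalings to the $\wn$-weighted norm $\|\cdot\|_{\newZ^{\newellfour+1}_\wn}$.

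First, since $\Pi_h$ is the $\Hilbert$-orthogonal projection onto $\Hilbert_h$ and the norms $\|\cdot\|_\Hilbert$ and $\|\cdot\|_{H_\wn(\curl,\Omega)}$ are equivalent (with constants depending only on the bounds on $\Re\mu$ from \eqref{eq:coefficients_sign}), the best-approximation property gives
\beqs
\|(I-\Pi_h)v\|_{H_\wn(\curl,\Omega)}
\leq C\inf_{w_h\in \Hilbert_h}\|v-w_h\|_{H_\wn(\curl,\Omega)}
\leq C\|v-\CJ_h v\|_{H_\wn(\curl,\Omega)},
\eeqs
where $\CJ_h$ is the interpolant of Theorem~\ref{thm:interpolation} (which is applicable since $v\in \newZ^{\newellfour+1}\subset \newZ^2$).

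Second, I would apply the element-wise bounds \eqref{eq:interpolation1} and \eqref{eq:interpolation2}, square, and sum over $K\in\CT_h$. The inner sum over $j\in\{1,\dots,\newellfour\}$ has at most $p$ terms, so Cauchy--Schwarz contributes only a constant. Because $\CT_h$ is conforming with the partition $\{\Omega_i\}_{i=1}^n$ (see~\S\ref{sec:mesh}), each $K$ lies entirely in one $\Omega_i$, and hence
\beqs
\sum_{K\in\CT_h}|v|^2_{H^j(K)}=\sum_{i=1}^n|v|^2_{H^j(\Omega_i)},
\qquad
\sum_{K\in\CT_h}|\curl v|^2_{H^j(K)}=\sum_{i=1}^n|\curl v|^2_{H^j(\Omega_i)},
\eeqs
which are the natural pieces of $\|\cdot\|_{\Hpwo{\newellfour}(\Omega)}$ up to powers of $\wn$.

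Third, I would rescale: writing
\beqs
(h_K/L)^\newellfour L^j = (\wn h_K)^\newellfour (\wn L)^{j-\newellfour}\wn^{-j},
\eeqs
using $h_K\leq h$, and using that $(\wn L)^{j-\newellfour}$ is uniformly bounded for $j\in\{1,\ldots,\newellfour\}$, $\newellfour\leq p$ and $\wn\geq \wn_0$ (since $\wn L\geq \wn_0 L>0$ and the exponent lies in $\{1-p,\ldots,0\}$), we factor out $(\wn h)^\newellfour$. The identities $\wn^{-j}|v|_{H^j(\Omega_i)}\leq \|v\|_{\Hpwo{\newellfour}(\Omega)}$ and $\wn^{-j-1}|\curl v|_{H^j(\Omega_i)}\leq \|\wn^{-1}\curl v\|_{\Hpwo{\newellfour}(\Omega)}$ then yield bounds by $\|v\|_{\newZ^{\newellfour+1}_\wn}$.

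The one technical subtlety, which I expect to be the main obstacle, is the extra factor $h_K$ in the term $h_K|\curl v|_{H^j(K)}$ of \eqref{eq:interpolation1}: after $\wn$-rescaling it contributes an additional factor $\wn h_K\leq \wn h$ on top of the desired $(\wn h)^\newellfour$. This is harmless: in the regime $\wn h\leq 1$ the extra $(1+\wn h)$ is a uniform constant, while if $\wn h\geq 1$ the conclusion is trivial because
\beqs
\|(I-\Pi_h)v\|_{H_\wn(\curl,\Omega)}\leq \|v\|_{H_\wn(\curl,\Omega)}\leq \|v\|_{\newZ^{\newellfour+1}_\wn}\leq (\wn h)^\newellfour\|v\|_{\newZ^{\newellfour+1}_\wn}.
\eeqs
Either way, the $(1+\wn h)$ factor is absorbed into the constant $C$ (which may depend on $\wn_0 L$, $p$, and the bounds on $\Re\mu$).
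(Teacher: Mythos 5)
Your proposal is correct and follows essentially the same route as the paper: interpolate with $\CJ_h$ from Theorem~\ref{thm:interpolation}, sum the element-wise bounds over the mesh using conformity with the partition and $\hK\leq h$, rewrite $(\hK/L)^\newellfour L^j$ in terms of $(\wn h)^\newellfour$ and $(\wn L)^{j-\newellfour}$, and absorb the bounded $(\wn L)$-powers into the constant via $\wn\geq \wn_0$. Your explicit case split $\wn h\leq 1$ vs.~$\wn h\geq 1$ to absorb the residual $\wn h_K$ factor from the second term of \eqref{eq:interpolation1} is a useful clarification that the paper's own proof leaves implicit.
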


\bpf
By summing \eqref{eq:interpolation1} and \eqref{eq:interpolation2} over $K\in \cT_h$, 
recalling the assumption that $\cT_h$
is conforming with the partition $\{\Omega_i\}_{i=1}^n$ of $\Omega$ from Assumption \ref{ass:regularity}, and using that $\hK\leq h$, 
\beqs
\|\bv - \CJ_h \bv\|_{L^2(\Omega)}
\leq
C \bigg(\frac{h}{L}\bigg)^{\newellfour}
\sum_{i=1}^n \sum_{j=0}^\newellfour
L^j \Big(|v|_{H^j(\Omega_i)}+(\wn\hK)|\wn^{-1}\curl v|_{H^j(\Omega_i)}\Big)
\eeqs
and
\beqs
\|\wn^{-1}\curl(\bv - \CJ_h \bv)\|_{L^2(\Omega)}
\leq
C \bigg(\frac{h}{L}\bigg)^{\newellfour}
\sum_{i=1}^n 
\sum_{j=1}^\newellfour
L^j |\curl v|_{H^j(\Omega_i)}.
\eeqs
The result then follows by 
using the definitions of the norms 
$\|\cdot\|_{H_\wn(\curl,\Omega)}$ \eqref{eq:1knorm}, 
$\|\cdot\|_{\Hpwo{\newellfour}(\Omega)}$ \eqref{eq:pw_norm}, 
and 
$\|\cdot\|_{\newZ_\wn^{\newellfour+1}}$ \eqref{eq:Zjnorm}, along with the fact that $\wn L \geq k_0 L$ (to absorb factors of $(\wn L)^{-1}$ into the constant $C$).
\epf

\section{Proof of Theorem \ref{thm:intro}}\label{sec:Maxwell_proof}

To show that  Theorem \ref{thm:intro} follows from the abstract result Theorem \ref{thm:abs1}, we need to 
\ben
\item prove Lemma \ref{lem:Maxwell} (i.e., show that Maxwell fits into the abstract framework),
\item show that the assumptions of the second part of Lemma \ref{lem:oscil} hold when $\dive f=0$,
\item bound $\gamma_{\rm dv}(\operator)$ and $\gamma_{\rm dv}(\operator^*)$ and show that the condition on these in \eqref{eq:sufficiently_small} is weaker (when $\wn L\gg 1$) than the condition \eqref{eq:sufficiently_small} involving $\Rs$ (since only the latter appears in \eqref{eq:threshold}), and 
\item show that, given $\newell\in\mathbb{N}$ and $p\leq \newell$, there exists $C>0$ such that 
\begin{align}\label{eq:interpolation_new}
\N{I- \Pi_h}_{\newZ_\wn^{\newell+1}\to H_\wn (\curl,\Omega)} 
&\leq C (\wn h )^p.
\end{align}
\een 

Indeed, Theorem \ref{thm:intro} follows from Theorem \ref{thm:abs1} using these points, as well as the fact that the $L^2\to L^2$ norm of the adjoint solution operator equals the 
$L^2\to L^2$ norm of the solution operator (just from standard properties of adjoint operators) so that $\|(\operator^*)^{-1}\|_{\Hilbertzero^*\to\Hilbertzero} = \Csol$.

Points 1, 2, and 3 are proved in \S\ref{sec:lemMaxwellProof}, \S\ref{sec:10.2}, and \S\ref{sec:gamma_dv_main}, respectively. 
Regarding Point 3:~we show in Lemma \ref{lem:gamma_dv_main} below that $\max\{\gamma_{\rm dv}(\operator), \gamma_{\rm dv}(\operator^*)\}\leq C \wn h (1+\wn h)$, i.e., 
for the first inequality in \eqref{eq:sufficiently_small} to hold, $\wn h$ must be sufficiently small. This condition is indeed weaker when $\wn L\gg 1$ than the condition ``$(\wn h)^{2p} \Csol$ is sufficiently small'' arising from the second inequality in \eqref{eq:sufficiently_small}, since $\Csol \geq C\wn L$ (as recalled in Remark \ref{rem:Csol}).

Point 4 follows from \eqref{eq:interpolation}; indeed, 
given $\newell\in\mathbb{N}$ and $p\leq \newell$, the bound \eqref{eq:interpolation} with $\newellfour=p$ implies that 
there exists $C>0$ such that
\begin{align*}
\N{ (I- \Pi_h) v}_{H_\wn (\curl,\Omega)} 
&\leq C (\wn h )^p
\N{v}_{\newZ^{p+1}_\wn}\leq C(\wn h)^p \N{v}_{\newZ^{\newell+1}_\wn},
\end{align*}
so that \eqref{eq:interpolation_new} follows.

\subsection{Proof of Lemma \ref{lem:Maxwell}}\label{sec:lemMaxwellProof}

\subsubsection{Proof of Part (a).}

Since $H_0(\curl;\Omega)\supset C^\infty_0(\Omega)$ (by, e.g., \cite[Equation 3.42]{Mo:03}) and $C^\infty_0(\Omega)$ is dense in $L^2(\Omega)$, $\Hilbert=H_0(\curl;\Omega)$ is dense in $\Hilbertzero= L^2(\Omega)$.

\ble\label{lem:kernel}
If $\Pone  := \wn ^{-2} \curl \mu^{-1}\curl$ then, in $H_0(\curl,\Omega)$, $\Ker \Pone=\Ker\Pone^*= \Ker (\curl)$. 
\ele

\bpf
With $\Hilbert= H_0(\curl,\Omega)$, by, e.g., \cite[Theorem 3.31]{Mo:03},
\beqs
\langle \Pone u,v\rangle_{\Hilbert^*\times\Hilbert} = \langle u, \Pone^* v\rangle_{\Hilbert^*\times\Hilbert} = \wn ^{-2}\big(\mu^{-1}\curl u , \curl v\big)_{L^2(\Omega)}
\eeqs
for all $u,v \in H_0(\curl,\Omega)$. Therefore, if $\Pone u=0$, then 
\beqs
0 = \wn ^{-2}\big(\mu^{-1}\curl u , \curl u\big)_{L^2(\Omega)},
\eeqs
and thus $\curl u=0$ by \eqref{eq:coefficients_sign}. Identical arguments show that if $\Pone^*v =0$, then $\curl v=0$.
Clearly if $\curl u=0$ then $u\in \Ker \Pone\cap \Ker \Pone^*$, and the result follows.
\epf

The rest of 
Part (i) of Lemma \ref{lem:Maxwell} follows immediately from the definitions.

\subsubsection{Proof of Part (b).}\label{sec:lemMaxwellProofb}

\paragraph{Proof that Parts (ii) and (iii) of Assumption \ref{ass:2} hold.}

By,  e.g., \cite[Theorem 1.4.1.1, page 21]{Gr:85}, the bound \eqref{eq:P2reg} (i.e., Part (iii) of Assumption \ref{ass:2}) holds 
with $\cZ^j$ given by $Z^j$ \eqref{eq:Zj} if $\epsilon$ is piecewise $C^{\newell,1}$ with respect to the partition $\{\Omega_i\}_{i=1}^n$.
We now use Theorem \ref{thm:Weber}  to prove that 
the bound \eqref{eq:er1} (i.e., Part (ii) of Assumption \ref{ass:2})
holds when $\Omega$ is $C^{\newell+1}$ with respect to the partition $\{\Omega_j\}_{j=1}^n$ (in the sense of Definition \ref{def:Crpartition}) and $\mu,\epsilon \in C^{\newell}(\overline{\Omega_j})$ for all $j=1,\ldots, n$
(recall from Remark \ref{rem:regularity} that the combination of these regularity requirements is then Assumption \ref{ass:regularity}).

\ble\label{lem:Maxwell_reg}
Suppose that $\Omega$ is $C^{\newell+1}$ with respect to the partition $\{\Omega_i\}_{i=1}^n$ (in the sense of Definition \ref{def:Crpartition}).
Suppose that $\zeta_1, \zeta_2$ are complex, matrix-valued functions on $\Omega$ satisfying $\Re \zeta_j \geq c>0$, $j=1,2$, (in the sense of quadratic forms) and 
$\zeta_1, \zeta_2 \in C^{\newell}(\Omega_j)$ for all $j=1,\ldots,n$.

Then 
there exists $C>0$ such that the following is true for $j=2,\ldots, \newell+1$. 
Given $f\in \newZ^{j-2}$ (defined by \eqref{eq:Zj}), 
if $v\in H_0(\curl,\Omega)$ is such that 
\beq\label{eq:Maxwell_reg0}
\wn ^{-2}\curl (\zeta_1 \curl  v ) = f \in \newZ^{j-2} 
\quad\tand\quad \dive(\zeta_2 v) =0  \quad\tin \Omega, 
\eeq
then 
\beq\label{eq:Maxwell_reg}
\N{v}_{\newZ^j} 
\leq C \Big( \N{v}_{L^2} +
\N{f}_{\newZ^{j-2}}\Big).
\eeq
\ele

\bpf 
First observe that it is sufficient to prove the bound
\beq\label{eq:Maxwell_reg2}
\N{v}_{\newZ^j} \leq C \Big( \N{v}_{L^2} + \N{\wn ^{-1}\curl v}_{L^2} + \N{f}_{\newZ^{j-2}}\Big).
\eeq
Indeed, the weak form of the PDE \eqref{eq:Maxwell_reg0} and the fact that 
$\Re \zeta_1 \geq c>0$ imply that 
\beqs
\N{\wn ^{-1}\curl v}^2_{L^2(\Omega)} \leq c^{-1} \N{f}_{L^2(\Omega)} \N{v}_{L^2(\Omega)}; 
\eeqs
the term involving $\curl v$ on the right-hand side of \eqref{eq:Maxwell_reg2} can therefore be removed (since $j\geq 2$), with \eqref{eq:Maxwell_reg} the result.

Let $w := \wn ^{-1} \zeta_1 \curl v$ so that $\wn ^{-1}\curl w=f$. Observe that 
$\dive (\zeta_1^{-1}w)=0$ and $\dive f=0$. 
With ${\rm div}_T$ the surface divergence on $\partial\Omega$, by, e.g., \cite[Equation 3.52]{Mo:03}, 
on $\partial\Omega$, 
\beqs
n \cdot \curl v = {\rm div}_T (v \times n).
\eeqs
Since $v\in H_0(\curl,\Omega)$, 
$v\times n=0$ on $\partial\Omega$, and thus $n \cdot (\zeta_1^{-1} w)=0$ on $\partial \Omega$.

The regularity assumptions on $\Omega$, $\zeta_1,$ and $\zeta_2$ imply that we can apply Theorem \ref{thm:Weber} with $\kappa=\newell$ and $\zeta$ equal one of $\zeta_1$, $\zeta_2$, or their inverses. 
Therefore, Theorem \ref{thm:Weber} applied with $u=w$, $\zeta= \zeta_1^{-1}$, $\kappa=\newell$, and $\ell=j-2$, $j=2,\ldots,\newell+1$ (so that $\ell\leq \kappa-1$ as required by Theorem \ref{thm:Weber}), implies that
\beq\label{eq:rain1}
\N{w}_{\Hpwo{j-1}(\Omega)}\leq C\Big( \N{w}_{L^2}+ \N{\wn ^{-1}\curl w}_{\Hpwo{j-2}(\Omega)}\Big)
\leq C\Big( \N{\wn^{-1}\curl v}_{L^2}+ \N{f}_{\Hpw{j-2}(\Omega)}\Big).
 \eeq
Similarly, Theorem \ref{thm:Weber} applied with $u=f$, $\zeta=I$, and $\kappa=\newell$, and $\ell=j-3$ ($j=3,\ldots,\newell+1$, so that $\ell\leq \kappa-1$) implies that 
\beq\label{eq:rain2}
\N{f}_{\Hpwo{j-2}(\Omega)}\leq C\Big( \N{f}_{L^2(\Omega)}+ \N{\wn^{-1}\curl f}_{\Hpwo{j-3}(\Omega)}\Big) \quad\tfor j=3,\ldots,\newell+1.
\eeq
Since $\wn ^{-1}\curl v= \zeta_1^{-1} w$ and $\zeta_1^{-1}$ is piecewise $C^\newell$,
\beq\label{eq:scooter2}
\N{\wn ^{-1}\curl v}_{\Hpwo{j-1}(\Omega)}
=\big\|\zeta_1^{-1} w \big\|_{\Hpwo{j-1}(\Omega)} \leq C \big\|w \big\|_{\Hpwo{j-1}(\Omega)} \quad\tfor j=2, \ldots, \newell+1
\eeq
by, e.g., \cite[Theorem 1.4.1.1, page 21]{Gr:85}.

The combination of \eqref{eq:rain1}, \eqref{eq:rain2}, and \eqref{eq:scooter2} imply that 
\beq\label{eq:rain3}
\N{\wn ^{-1}\curl v}_{\Hpwo{j-1}(\Omega)}
\leq C\Big( \N{\wn^{-1}\curl v}_{L^2}+\N{f}_{L^2(\Omega)}+ \N{\wn^{-1}\curl f}_{\Hpwo{j-3}(\Omega)}\Big) \,\tfor j=3,\ldots,\newell+1.
\eeq

Theorem \ref{thm:Weber} applied with $u=v$, $\zeta= \zeta_2$, $\kappa=\newell$, and $\ell=j-2$, $j=2,\ldots,\newell+1$ (so that again $\ell\leq \kappa-1$ as required by Theorem \ref{thm:Weber}), implies that
\beq\label{eq:rain4}
\N{v}_{\Hpwo{j-1}(\Omega)}\leq C\Big( \N{v}_{L^2}+ \N{\wn ^{-1}\curl v}_{\Hpwo{j-2}(\Omega)}\Big).
\eeq
The combination of \eqref{eq:rain3} and \eqref{eq:rain4} implies that the bound \eqref{eq:Maxwell_reg2} holds for $j=3,\ldots,\newell+1$. 
The bound \eqref{eq:Maxwell_reg2} when $j=2$ then follows from combining \eqref{eq:rain1} and \eqref{eq:rain4}, both with $j=2$, and the proof is complete.
\epf

To prove that Part (ii) of Assumption \ref{ass:2} holds, 
we seek to apply Lemma \ref{lem:Maxwell_reg} with $v =\Pi_1u$, $\zeta_1= \mu^{-1},$ and $\zeta_2 = \Ptwo= \epsilon$.
Since $C^\infty(\overline{D})$ is dense in $H^s(D)$ for all $s\in \Rea$ and $D\subset \Rea^3$ open (see, e.g., \cite[Page 77]{Mc:00}), $L^2(\Omega)$ is dense in $(\Hpw{j}(\Omega))^*$ for $j\geq 1$,
so the assumption that $\Hilbertzero^{*}$ is dense in $(\cZ^j)^*=(\newZ^j)^*$ for $j\geq 1$ is satisfied. 
The bound \eqref{eq:er1} then follows from \eqref{eq:Maxwell_reg} if $\Pi_1 u$ is in $H_0(\curl,\Omega)$ and satisfies 
$\dive (\epsilon \Pi_1u)=0$. 
Recall from \eqref{eq:simple_norm2} that $\Pi_0:\Hilbert\to\Hilbert$, and thus also $\Pi_1:\Hilbert\to \Hilbert$. 
Thus, $u\in H_0(\curl,\Omega)$ implies that $\Pi_1 u \in H_0(\curl,\Omega)$. For the zero-divergence condition, 
observe that, since gradients are always inside the kernel of $\curl$, 
\beqs
\big\langle \grad \phi,\Pi_1^* v \big\rangle_{L^2(\Omega)} 
=\big\langle \Pi_1 (\grad \phi), v \big\rangle_{L^2(\Omega)} =0 \quad \tfa \phi \in H^1_0(\Omega) \tand v \in L^2(\Omega).
\eeqs
Thus, $\dive \Pi_1^* v=0$ for all $v\in L^2(\Omega)$ (by, e.g., \cite[Equation 3.33]{Mo:03}). 
By \eqref{eq:key2}, $\dive (\epsilon \Pi_1 u) =  \dive (\Ptwo  \Pi_1 u)=\dive(\Pi_1^* \Ptwo \Pi_1 u)$ so that $\dive (\epsilon \Pi_1 u)=0$ as required. 

\paragraph{Proof that Part (i) of Assumption \ref{ass:2} holds.}
By, e.g., \cite[Theorem 3.41]{Mo:03}, the kernel of the curl operator in $H_0(\curl,\Omega)$ equals $(\nabla H^1_0(\Omega))\oplus K_N(\Omega)$, where
\beq\label{eq:KN}
K_N(\Omega):= \Big\{ u \in H_0(\curl,\Omega)\, :\, \curl u= 0 \tand \dive u = 0 \tin \Omega\Big\}
\eeq
(the \emph{normal cohomology space}); note that the dimension of $K_N(\Omega)$ equals the number of connected components of $\partial \Omega$ minus one; see, e.g., 
\cite[Theorem 3.42]{Mo:03}.
Let 
$\Pi^{\Hilbertzero}_{\nabla H^1_0(\Omega)}$ and 
$\Pi^{\Hilbertzero}_{K_N(\Omega)}$ be the $\Hilbertzero$-orthogonal projections onto $\nabla H^1_0(\Omega)$ and $K_N(\Omega)$, respectively, so that 
\beq\label{eq:Pi0split}
\Pi^{\Hilbertzero}_0= \Pi^{\Hilbertzero}_{K_N(\Omega)} +\Pi^{\Hilbertzero}_{\nabla H^1_0(\Omega)}.
\eeq

\begin{lemma}\label{lem:CoDaNi}
If $\partial \Omega\in C^{\newell+1}$ then $K_N(\Omega) \subset Z^{m+1}$ (where $Z^{m+1}$ is defined by  \eqref{eq:Zj}).
\end{lemma}

That is, $\Pi^{\Hilbertzero}_{K_N(\Omega)}$ smooths to the maximal extent possible given the spaces $\{\cZ^j\}_{j=0}^{\newell+1}$, and, in particular, preserves regularity, as required for \eqref{eq:proj_reg}.  

\bpf[Proof of Lemma \ref{lem:CoDaNi}]
The definition of $K_N(\Omega)$ implies that elements of $K_N(\Omega)$ are solutions of the equation 
\beqs
\curl \curl u - \grad(\dive u) =0 \quad\tin \Omega.
\eeqs
By \cite[\S4.5]{CoDaNi:10}, 
this PDE is strongly elliptic in the sense of \cite[Definition 3.2.2]{CoDaNi:10}, and the PDE plus the boundary condition $u\times n=0$ on $\partial \Omega$ are then elliptic in the sense of \cite[Definition 2.2.31]{CoDaNi:10}; see \cite[Theorem 3.2.6]{CoDaNi:10}. Since $\partial \Omega\in C^{\newell+1}$ (by Assumption \ref{ass:regularity}), the elliptic-regularity result \cite[Theorem 3.4.5]{CoDaNi:10} implies that $K_N(\Omega)\subset H^{\newell+1}(\Omega)$. 
Since every $u\in K_N(\Omega)$ has $\curl u=0$ by definition, $K_N(\Omega)$ is therefore contained in $\newZ^{\newell+1}$ \eqref{eq:Zj}. 
\epf

Given $f\in L^2(\Omega)$, $\Pi^{\Hilbertzero}_{\nabla H^1_0(\Omega)} f =\nabla \phi$, where $\phi\in H_0^1(\Omega)$ is the unique solution of the variational problem
\beq\label{eq:Laplace}
(\nabla\phi, \nabla v)_{L^2(\Omega)}= (f, \nabla v)_{L^2(\Omega)} \quad\tfa v\in H^1_0(\Omega).
\eeq
Observe that this is the weak form of the PDE $\Delta \phi = \dive f$.

We now apply Theorem \ref{thm:Weber} with $u= \nabla\phi$, $\zeta= I$, and $\kappa=\newell$ (note that this is allowed because $\Omega$ is $C^{\newell+1}$ with respect to the partition). Since $\phi \in H^1_0(\Omega)$ implies that $\nabla \phi \in H_0(\curl;\Omega)$ (see, e.g., \cite[Equation 3.60/\S B.3]{Mo:03}), the boundary condition on $u$ in Theorem \ref{thm:Weber} is satisfied and, for $\ell=0,\ldots,\newell-1$, 
\beq\label{eq:Weber2a}
\N{\nabla \phi }_{\Hpwo{\ell+1}(\Omega)} \leq C \Big( \N{\nabla \phi}_{L^2(\Omega)} + \N{ \wn ^{-1}\dive f}_{\Hpwo{\ell}(\Omega)}\Big).
\eeq
By \eqref{eq:Laplace} with $v=\phi$, $\|\nabla \phi\|_{L^2(\Omega)}\leq  \N{f}_{L^2(\Omega)}$. Therefore, by  \eqref{eq:Weber2a} 
applied with $\ell+1=j-1$ (so that $\ell=0,\ldots,k-1$ corresponds to $j=2,\ldots, \newell+1$)
and the definition of $\|\cdot\|_{\Hpwo{j}(\Omega)}$ \eqref{eq:Zjnorm},
\beqs
\big\|\Pi^{\Hilbertzero}_{\nabla H^1_0(\Omega)} f\big\|_{\newZ^{j}_\wn}=\|\nabla\phi\|_{\newZ^{j}_\wn}
=\|\nabla\phi\|_{\Hpwo{j-1}(\Omega)}
\leq C 
\| f\|_{\Hpwo{j-1}(\Omega)}
\leq C \| f\|_{\newZ^{j}_\wn}
\eeqs
for $j=2,\ldots,\newell+1$. The splitting \eqref{eq:Pi0split} therefore implies that \eqref{eq:proj_reg} holds for $j=2,\ldots,\newell+1$. Since $\cZ^1=\Hilbert$, and $\Pi^{\Hilbertzero}_0:\Hilbert\to\Hilbert$ is bounded by \eqref{eq:simple_norm2}, \eqref{eq:proj_reg} holds for $j=1,\ldots,\newell+1$ and thus Part (i) of Assumption \ref{ass:2} holds, as required.

\paragraph{Proof that Part (iv) of Assumption \ref{ass:2} holds.}

We first show that the bound \eqref{eq:CG1} follows if we can show that there exists $C>0$ such that, for $j=1,\ldots,\newell+1$, 
\beq\label{eq:CG1alt}
\big\|\Pi^{\Hilbertzero}_{\nabla H^1_0(\Omega)} u \big\|_{\cZ^j} \leq C \Big( 
\big\| \Pi^{\Hilbertzero}_{\nabla H^1_0(\Omega)}\mathsf{E} \Pi^{\Hilbertzero}_{\nabla H^1_0(\Omega)}u\big\|_{\cZ^j} + \big\|\Pi^{\Hilbertzero}_{\nabla H^1_0(\Omega)}u \big\|_{\Hilbertzero}
\Big)\quad\tfa u\in\Hilbertzero.
\eeq
Indeed, by the smoothing property of $\Pi^{\Hilbertzero}_{K_N(\Omega)}$ in Lemma \ref{lem:CoDaNi} and the 
regularity preserving properties of $\Pi^{\Hilbertzero}_0$ \eqref{eq:proj_reg} and $\mathsf{E}$ 
 \eqref{eq:P2reg}, for all $u\in \Hilbertzero$,
\beqs
 \big\|\Pi^{\Hilbertzero}_{\nabla H^1_0(\Omega)} \mathsf{E} \Pi^{\Hilbertzero }_{\nabla H^1_0(\Omega)}u\big\|_{\cZ^j} 
\leq
\big\|\Pi^{\Hilbertzero}_0 \mathsf{E} \Pi^{\Hilbertzero }_0u\big\|_{\cZ^j} 
+ C \Big( \big\| \Pi^{\Hilbertzero}_{K_N(\Omega)} u\big\|_{\Hilbertzero} + \big\| \Pi^{\Hilbertzero}_{\nabla H^1_0(\Omega)} u\big\|_{\Hilbertzero}\Big).
\eeqs
Therefore, by \eqref{eq:Pi0split}, \eqref{eq:CG1alt}, and the fact that $\Pi^{\Hilbertzero}_{K_N(\Omega)}$ is smoothing, for all $u\in\Hilbertzero$, 
\begin{align*}
\big\|\Pi^{\Hilbertzero}_{0} u \big\|_{\cZ^j} 
&\leq \big\|\Pi^{\Hilbertzero}_{\nabla H^1_0(\Omega)} u \big\|_{\cZ^j} + \big\| \Pi^{\Hilbertzero}_{K_N(\Omega)} u\big\|_{\Hilbertzero} \\
&\leq C\Big(
\big\|\Pi^{\Hilbertzero}_0 \mathsf{E} \Pi^{\Hilbertzero }_0u\big\|_{\cZ^j} 
+  \big\| \Pi^{\Hilbertzero}_{K_N(\Omega)} u\big\|_{\Hilbertzero} + \big\| \Pi^{\Hilbertzero}_{\nabla H^1_0(\Omega)} u\big\|_{\Hilbertzero}\Big),
\end{align*}
and the result \eqref{eq:CG1} follows since $\| \Pi^{\Hilbertzero}_{K_N(\Omega)} u\|_{\Hilbertzero} 
+\| \Pi^{\Hilbertzero}_{\nabla H^1_0(\Omega)} u\|_{\Hilbertzero}= 
\| \Pi^{\Hilbertzero}_{0} u\big\|_{\Hilbertzero}$ (since $\Ker \Pone=(\nabla H^1_0(\Omega))\oplus K_N(\Omega)$ in $\Hilbertzero$).

We now prove \eqref{eq:CG1alt} with $\mathsf{E} =\iota^{-1}\Ptwo=\epsilon$; the proof for $\mathsf{E}=\iota^{-1}\Ptwo^* = \epsilon^*$ is analogous. 
By definition, there exists $\phi\in H^1_0(\Omega)$ such that $ \Pi^{\Hilbertzero}_{\nabla H^1_0(\Omega)} u=\nabla\phi$. Then $ \Pi^{\Hilbertzero}_{\nabla H^1_0(\Omega)} (\iota^{-1}\Ptwo)  \Pi^{\Hilbertzero}_{\nabla H^1_0(\Omega)} u = \nabla w$ where
\beqs
\big( \nabla w,\nabla v \big)_{L^2(\Omega)} = \big( \epsilon \nabla\phi, \nabla v \big)_{L^2(\Omega)} \quad\tfa v\in H^1_0(\Omega);
\eeqs
i.e., $\Delta w = \dive (\epsilon \nabla \phi)$. 
When $j=1$, since $\cZ^1=\Hilbert=H_0(\curl,\Omega)$, 
\beqs
\big\|\Pi^{\Hilbertzero}_{\nabla H^1_0(\Omega)} u \big\|_{\cZ^1} = \big\|\nabla\phi \big\|_{\Hilbert}=\big\|\nabla\phi \big\|_{L^2(\Omega)}
=\big\|\Pi^{\Hilbertzero}_{\nabla H^1_0(\Omega)} u \big\|_{\Hilbertzero} 
\eeqs
and thus \eqref{eq:CG1alt} immediately holds when $j=1$. To prove \eqref{eq:CG1alt} for $j=2,\ldots,\newell+1$, 
we apply the regularity result of Theorem \ref{thm:Weber} with $u=\nabla\phi$, $\zeta=\epsilon$, and 
$\kappa=\newell$ (so that the regularity assumptions on $\{\Omega_j\}_{j=1}^n$ and $\epsilon$ are satisfied by Assumption \ref{ass:regularity}).
By the definition of $\cZ^j$ \eqref{eq:Zj} and the regularity result \eqref{eq:Weber1} with $\ell+1=j-1$ (so that $\ell=0,\ldots,k-1$ corresponds to $j=2,\ldots,\newell+1$), 
there exists $C>0$ such that, for $j=2,\ldots,\newell+1$, 
\begin{align*}
\N{\nabla\phi}_{\cZ^{j}}
=\N{\nabla\phi}_{\newZ^{j}_\wn}
=\N{\nabla\phi}_{\Hpwo{j-1}(\Omega)} &\leq C\Big(\N{\nabla \phi}_{L^2(\Omega)} + \N{\wn ^{-1}\Delta w}_{\Hpwo{j-2}(\Omega)}\Big)\\
&\hspace{-1cm}\leq C\Big(\N{\nabla \phi}_{L^2(\Omega)} + \N{\nabla w}_{\Hpwo{j-1}(\Omega)}\Big)\\
&\hspace{-1cm}\leq C \Big( \N{\nabla\phi}_{L^2(\Omega)} + \N{\nabla w}_{\newZ^{j}_\wn}\Big)= C \Big( \N{\nabla\phi}_{\Hilbertzero} + \N{\nabla w}_{\cZ^{j}}\Big); 
\end{align*}
i.e., \eqref{eq:CG1alt} holds for $j=2,\ldots,\newell+1$ and the proof is complete.

\subsection{The assumptions of the second part of Lemma \ref{lem:oscil}}\label{sec:10.2}

\ble
Suppose that $\Omega$ is $C^{\newell+1}$ with respect to the partition $\{\Omega_j\}_{j=1}^n$ (in the sense of Definition \ref{def:Crpartition}) and 
$\epsilon,\mu \in C^{\newell}(\overline{\Omega_j})$ 
 for all $j=1,\ldots, n$.

If $\dive f=0$ then there exists $\widetilde\Pi_0$ such that (i)  $\Pi_0^*f=\widetilde\Pi_0^*f$, (ii) $\Pi_0 \widetilde\Pi_0 = \widetilde\Pi_0$, and (iii)  
the bound \eqref{eq:tired2} holds.
\ele

\bpf
Let 
\beq\label{eq:tildePi}
\widetilde\Pi_0:= \Pi^{\Hilbertzero}_{K_N(\Omega)} \Pi_0,
\eeq
where (as above) $\Pi^{\Hilbertzero}_{K_N(\Omega)}$ is the $\Hilbertzero$-orthogonal projection onto $K_N(\Omega)$ \eqref{eq:KN}. 
 Then Point (ii) holds since $\widetilde{\Pi}_0$ maps into the kernel. Furthermore, $\Pi_0^* f = \widetilde\Pi_0^* f$ if and only if, for all $v\in \Hilbert$, 
\beq\label{eq:levoit1}
\langle f, \Pi_0 v\rangle_{\Hilbert^*\times \Hilbert} 
=\langle f, \widetilde\Pi_0 v\rangle_{\Hilbert^*\times \Hilbert};  
\quad\text{ i.e., } \quad \big\langle f, (I-\Pi^{\Hilbertzero}_{K_N(\Omega)})\Pi_0 v\big\rangle_{\Hilbert^*\times \Hilbert}=0.
\eeq
On the kernel, $I-\Pi^{\Hilbertzero}_{K_N(\Omega)}$ projects to $\grad H^1_0(\Omega)$, so to prove \eqref{eq:levoit1} it is sufficient to prove that $(f,\nabla \phi)_{L^2(\Omega)}=0$ for all $\phi\in H_0^1(\Omega)$. This last statement is the condition that $\dive f=0$, so Point (i) holds. 
For (iii), we first observe that,  by \eqref{eq:tildePi}, the fact that $(\Pi^{\Hilbertzero}_{K_N(\Omega)} )^*= \iota \Pi^{\Hilbertzero}_{K_N(\Omega)} \iota^{-1}$ (by \eqref{eq:Riesz} and the self-adjointness of $\Pi^{\Hilbertzero}_{K_N(\Omega)}$), and \eqref{eq:sun1}, 
\begin{align}\label{eq:tired3}
\big\|\iota^{-1} \widetilde\Pi_0^* \iota\big\|_{\cZ^{m-1}\to \cZ^{m+1}} 
= \big\|\iota^{-1}\Pi_0^*\iota\Pi^{\Hilbertzero}_{K_N(\Omega)}\big\|_{\cZ^{m-1}\to \cZ^{m+1}} 
\leq 
C\big\|\Pi^{\Hilbertzero}_{K_N(\Omega)}\big\|_{\cZ^{m-1}\to \cZ^{m+1}}.
\end{align}
By Lemma \ref{lem:CoDaNi}, $\|\Pi^{\Hilbertzero}_{K_N(\Omega)}\|_{\cZ^{m-1}\to \cZ^{m+1}}\leq C$, and this combined with \eqref{eq:tired3} gives the desired bound \eqref{eq:tired2}.
\epf

\subsection{Proof that $\gamma_{\rm dv}(\operator), \gamma_{\rm dv}(\operator^*)\leq C \wn h (1+\wn h) $}\label{sec:gamma_dv_main}

\ble[Bound on $\gamma_{\rm dv}(\operator) , \gamma_{\rm dv}(\operator^*)$]\label{lem:gamma_dv_main}
Let $P= \Pone -\Ptwo$ with $\Pone$ and $\Ptwo$ defined by \eqref{eq:Maxwell_PDE} with $\mu$ and $\epsilon$ satisfying \eqref{eq:coefficients_sign}.
Suppose that $\Omega$ is $C^{2}$ with respect to the partition $\{\Omega_j\}_{j=1}^n$ (in the sense of Definition \ref{def:Crpartition}) and 
$\epsilon,\mu \in C^{1}(\overline{\Omega_j})$ 
 for all $j=1,\ldots, n$.
Then
\beqs
\max\big\{ \gamma_{\rm dv}(\operator) , \gamma_{\rm dv}(\operator^*)\big\} \leq C \wn h (1+ \wn h).
\eeqs
\ele

Lemma \ref{lem:gamma_dv_main} is a consequence of (i) the following abstract bound on $\gamma_{\rm dv}$ and (ii) properties of the 
interpolation operator $\CJ_h$ recapped in \S\ref{sec:interpolation}. 

\ble\label{lem:gammadv1}
Suppose there exists $J: \Hilbert_h \to \Hilbert_h$
such that (i) $Jw_h=w_h$ for all $w_h\in \Hilbert_h$, 
(ii) $J: \Pi_1 \Hilbert_h \to \Hilbert_h$, 
and (iii) $J\Pi_0 w_h \in  \Hilbert_h\cap\Ker \Pone$ for all $w_h\in \Hilbert_h$. 
Then there exists $C>0$ such that, if 
\beq\label{eq:og1}
w_h\in \Hilbert_h \text{ satisfies } \big\langle \Ptwo  w_h, v_h \big\rangle_{\Hilbertzero^*\times\Hilbertzero}=0 \,\,\tfa v_h \in \Hilbert _h \cap \Ker \Pone 
\eeq
then
\beqs
\N{\Pi_0 w_h}_{\Hilbertzero} \leq C \N{ (I-J) \Pi_1w_h}_{\Hilbertzero}.
\eeqs
\ele

The following result then holds immediately from the definition of $\gamma_{\rm dv}(\operator)$ \eqref{eq:gamma_dv}.

\begin{corollary}\label{cor:gammadv}
Under the assumptions of Lemma \ref{lem:gammadv1}, 
there exists $C>0$ such that
\begin{align*}
&\gamma_{\rm dv}(\operator)\leq  C\sup \bigg\{
\frac{
\N{ (I-J) \Pi_1w_h}_{\Hilbertzero}
}{\N{w_h}_{\Hilbert }} \, :\, \\
&\hspace{4cm}w_h\in \Hilbert_h \text{ satisfies } \big\langle\Ptwo  w_h, v_h \big\rangle_{\Hilbertzero^*\times\Hilbertzero}=0 \,\,\tfa v_h \in \Hilbert _h \cap \Ker \Pone 
\bigg\}.
\end{align*}
\end{corollary}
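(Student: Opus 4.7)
The plan is to test the orthogonality hypothesis \eqref{eq:og1} satisfied by $w_h$ against a carefully chosen element of $\Hilbert_h \cap \Ker \Pone$, and then exploit the $\Hilbertzero$-coercivity of $\Ptwo$ from \eqref{eq:P2} to extract a bound on $\N{\Pi_0 w_h}_{\Hilbertzero}$ in terms of $\N{(I-J)\Pi_1 w_h}_{\Hilbertzero}$. The crucial algebraic input will be the characterisation \eqref{eq:key4}, which expresses the fact that $\iota^{-1}\Ptwo \Pi_1$ is $\Hilbertzero$-orthogonal to $\Ker \Pone$; together with property (iii) of $J$, this will cause the cross term $\langle \Ptwo \Pi_1 w_h, J\Pi_0 w_h\rangle$ to vanish.

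First I would combine the linearity of $J$ with property (i) to write $w_h = J w_h = J\Pi_0 w_h + J\Pi_1 w_h$, which yields the key rearrangement
\begin{equation*}
J\Pi_0 w_h - \Pi_0 w_h = (I-J)\Pi_1 w_h.
\end{equation*}
By property (iii), $J\Pi_0 w_h \in \Hilbert_h \cap \Ker \Pone$, and is therefore an admissible test function in \eqref{eq:og1}. Substituting it in and splitting $w_h = \Pi_0 w_h + \Pi_1 w_h$ gives
\begin{equation*}
0 = \big\langle \Ptwo \Pi_0 w_h, J\Pi_0 w_h\big\rangle_{\Hilbertzero^*\times\Hilbertzero} + \big\langle \Ptwo \Pi_1 w_h, J\Pi_0 w_h\big\rangle_{\Hilbertzero^*\times\Hilbertzero}.
\end{equation*}
The second term vanishes by \eqref{eq:key4}, since $J\Pi_0 w_h \in \Ker \Pone$. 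Replacing $J\Pi_0 w_h$ in the first term by $\Pi_0 w_h + (I-J)\Pi_1 w_h$ and taking real parts then yields
\begin{equation*}
\Re \big\langle \Ptwo \Pi_0 w_h, \Pi_0 w_h\big\rangle_{\Hilbertzero^*\times\Hilbertzero} = - \Re\big\langle \Ptwo \Pi_0 w_h, (I-J)\Pi_1 w_h\big\rangle_{\Hilbertzero^*\times\Hilbertzero}.
\end{equation*}
Applying the coercivity \eqref{eq:P2} on the left and Cauchy--Schwarz together with the boundedness of $\Ptwo:\Hilbertzero\to\Hilbertzero^*$ on the right, I obtain
\begin{equation*}
C_{\Ptwo} \N{\Pi_0 w_h}_{\Hilbertzero}^2 \leq \N{\Ptwo}_{\Hilbertzero\to\Hilbertzero^*}\,\N{\Pi_0 w_h}_{\Hilbertzero}\,\N{(I-J)\Pi_1 w_h}_{\Hilbertzero},
\end{equation*}
and dividing by $\N{\Pi_0 w_h}_{\Hilbertzero}$ gives the claim with $C = \N{\Ptwo}_{\Hilbertzero\to\Hilbertzero^*}/C_{\Ptwo}$.

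There is no genuine obstacle here; the only subtlety is that $\Pi_0 w_h$ and $\Pi_1 w_h$ lie in $\Hilbertzero$ (in fact in $\Hilbert$, by \eqref{eq:simple_norm2}) rather than in $\Hilbert_h$, so one must verify that $J\Pi_0 w_h$ and $J\Pi_1 w_h$ make sense and lie in $\Hilbert_h$. Properties (ii) and (iii) are formulated precisely to guarantee this, and property (i) then ensures consistency of the decomposition $w_h = J\Pi_0 w_h + J\Pi_1 w_h$. In the Maxwell application used to prove Lemma \ref{lem:gamma_dv_main}, $J$ will be the interpolation operator $\CJ_h$ from Theorem \ref{thm:interpolation}, which by \eqref{eq:commuting} maps $\Ker \curl$ into $\Hilbert_h \cap \Ker \curl$ and which fixes $\Hilbert_h$ functions via the standard unisolvence of the N\'ed\'elec degrees of freedom; the bound $\N{(I-\CJ_h)\Pi_1 w_h}_{L^2} \lesssim \wn h(1+\wn h)\N{w_h}_{H_\wn(\curl,\Omega)}$ will then follow from the approximation estimates of Theorem \ref{thm:interpolation} applied to the divergence-free component $\Pi_1 w_h$, whose additional regularity is supplied by Lemma \ref{lem:Maxwell_reg}.
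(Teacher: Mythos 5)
Your proof is correct and follows essentially the same route as the paper: it reproduces the content of Lemma \ref{lem:gammadv1} (the identity $J\Pi_0 w_h - \Pi_0 w_h = (I-J)\Pi_1 w_h$, the use of $J\Pi_0 w_h \in \Hilbert_h\cap\Ker\Pone$ as a test function, the vanishing of the cross term via the characterisation of $\Pi_0$ — you cite \eqref{eq:key4} where the paper routes through the equivalent \eqref{eq:key2} — and coercivity/boundedness of $\Ptwo$), after which the Corollary follows by dividing by $\N{w_h}_{\Hilbert}$ and taking the supremum in the definition \eqref{eq:gamma_dv}. The only cosmetic gap is that your last line states the bound $\N{\Pi_0 w_h}_{\Hilbertzero}\leq C\N{(I-J)\Pi_1 w_h}_{\Hilbertzero}$ rather than the displayed inequality for $\gamma_{\rm dv}(\operator)$; that final passage is immediate and should simply be recorded.
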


\bpf[Proof of Lemma \ref{lem:gamma_dv_main} using Corollary \ref{cor:gammadv}]
We apply 
Corollary \ref{cor:gammadv} 
with $J= \CJ_h$. The assumptions (i) and (iii) on $J$ in Lemma \ref{lem:gammadv1} 
are satisfied by the properties of $\CJ_h$ recapped in 
Theorem \ref{thm:interpolation} and \eqref{eq:commuting}.

To show that $J: \Pi_1 \Hilbert_h \to \Hilbert_h$ (i.e., the assumption (ii) in Lemma \ref{lem:gammadv1}), we apply the regularity result of Theorem \ref{thm:Weber} with $\kappa=1$ and $\ell=0$. 
As in the proof of Lemma \ref{lem:Maxwell} (b), for the operator $\operator$, $\dive(\epsilon \Pi_1 w_h)=0$ for all $w_h\in \Hilbert_h$. Similarly, for the operator $\operator^*$, $\dive(\epsilon^* \Pi_1 w_h)=0$.
Therefore, in both cases, by Theorem \ref{thm:Weber} (with $\zeta$ equal either $\epsilon$ or $\epsilon^*$) and \eqref{eq:coefficients_sign}, 
\begin{align}\nonumber
\N{\Pi_1 w_h}_{\Hpwo{1}(\Omega)} &\leq C \Big( \N{\Pi_1 w_h}_{L^2(\Omega)} + \big\|\wn ^{-1} \curl(\Pi_1 w_h)\big\|_{L^2(\Omega)}\Big)\\
&= C \Big( \N{\Pi_1 w_h}_{L^2(\Omega)} + \big\|\wn ^{-1} \curl  w_h\big\|_{L^2(\Omega)}\Big),
\label{eq:final1}
\end{align}
since $\curl (\Pi_0 w_h)=0$ and thus $\curl (\Pi_1 w_h) =\curl w_h$. 
Therefore, given $w_h \in \Hilbert_h$, $\Pi_1 w_h \in H_0(\curl, \Omega)\cap \Hpw{1}(\Omega)$.
If we can show that $\curl (\Pi_1 w_h) \in \Hpw{1}(\Omega)$, then $\Pi_1 w_h\in Z^2$ (with $Z^j$ defined by \eqref{eq:Zj}), and then 
$J\Pi_1 w_h\in \Hilbert_h$ by Theorem \ref{thm:interpolation}. However, $\curl (\Pi_1 w_h)= \curl w_h$ (as established above), and 
 a standard inverse inequality (see, e.g., \cite[\S12.1]{ErGu:21} for the case of simplicial meshes and, e.g., \cite[Appendix A.1]{ChSp:25} for the case of curved meshes) implies that 
\beq\label{eq:inverse_estimate}
\|\wn^{-1}\curl w_h\|_{H^1_\wn(K)}\leq C\big(1 + (\wn h_K)^{-1}\big)\|\wn^{-1}\curl w_h\|_{L^2(K)}.
\eeq
Since $\cT_h$ is conforming with the partition $\{\Omega_i\}_{i=1}^n$ of $\Omega$ from Assumption \ref{ass:regularity}, $\curl (\Pi_1 w_h)= \curl w_h \in \Hpw{1}(\Omega)$, and we have established that $J: \Pi_1 \Hilbert_h \to \Hilbert_h$.

We now bound $\N{ (I-\CJ_h) \Pi_1w_h}_{L^2(\Omega)}$ appearing in the bound of Corollary \ref{cor:gammadv}. 
By \eqref{eq:interpolation1} with $r=1$, the definition of $H^1_\wn(K)$, the fact that $\curl (\Pi_1 w_h) =\curl w_h$, 
and the inverse estimate \eqref{eq:inverse_estimate},
given $k_0>0$ there exists $C>0$ such that, for all $k\geq k_0$,
\begin{align*}
\|(I- \CJ_h)\Pi_1 w_h\|_{L^2(K)}
&\leq
C \wn h_K
\Big(\|\Pi_1 w_h\|_{H^{1}_\wn(K)}
+
\wn h_K\|\wn^{-1}\curl  w_h\|_{H^{1}_k(K)}
\Big)\\
&\leq
C \wn h_K
\Big(\|\Pi_1 w_h\|_{H^{1}_k(K)}
+
\big(\wn h_K + 1\big)\|\wn^{-1}\curl  w_h\|_{L^2(K)}
\Big).
\end{align*}
Summing over $K\in \cT_h$, 
recalling that $\cT_h$
is conforming with the partition $\{\Omega_i\}_{i=1}^n$ of $\Omega$, using that $\hK\leq h$,  and using  
the definition of 
$\|\cdot\|_{\Hpwo{1}(\Omega)}$ \eqref{eq:pw_norm} gives
\beqs
\|(I- \CJ_h)\Pi_1 w_h\|_{L^2(\Omega)}
\leq
C \wn h 
\Big(\|\Pi_1 w_h\|_{\Hpwo{1}(\Omega)}
+
\big(\wn h+1\big)\|\wn^{-1}\curl  w_h\|_{L^2(\Omega)}
\Big).
\eeqs
Therefore, by \eqref{eq:final1} and the boundedness of $\Pi_1: L^2(\Omega)\to L^2(\Omega)$,
\begin{align*}
\|(I- \CJ_h)\Pi_1 w_h\|_{L^2(\Omega)}
&\leq 
C \wn h \Big( \N{ w_h}_{L^2(\Omega)} + \big(\wn h+1\big)\N{\wn ^{-1} \curl  w_h}_{L^2(\Omega)}\Big),
\end{align*}
and the result follows.
 \epf

\bpf[Proof of Lemma \ref{lem:gammadv1}]
We prove that, for $w_h$ as in \eqref{eq:og1}, 
\beq\label{eq:gammadvSTP}
C_{P_2} \N{\Pi_0 w_h}^2_{\Hilbertzero} \leq 
 \big|\big\langle \Ptwo \Pi_0 w_h,(I-J) \Pi_1 w_h\big\rangle_{\Hilbertzero^*\times\Hilbertzero}
\big|,
\eeq
and the result  then follows from  the boundedness of $\Ptwo: \Hilbertzero\to \Hilbertzero^*$.

Since $J$ is well defined on both $\Hilbert_h$ and $\Pi_1\Hilbert_h$, it is well defined on $\Pi_0 \Hilbert_h$. 
Now, by assumption $(I-J)w_h=0$; therefore $(I-J)\Pi_0 w_h=-(I-J)\Pi_1 w_h$ and
\beq\label{eq:train1}
\Pi_0 w_h = (I-J)\Pi_0 w_h + J \Pi_0 w_h = -(I-J)\Pi_1 w_h + J \Pi_0 w_h.
\eeq
By \eqref{eq:P2} with $v=\Pi_0 w_h$ and \eqref{eq:train1}, to prove \eqref{eq:gammadvSTP} it is sufficient to prove that 
\beq\label{eq:gammadvSTP2}
 \big\langle \Ptwo \Pi_0 w_h, J\Pi_0 w_h\big\rangle_{\Hilbertzero^*\times\Hilbertzero}=0.
 \eeq
If $v_h \in \Ker \Pone$ then $v_h =\Pi_0 v_h$ and, by \eqref{eq:key2}, 
\begin{align}\nonumber
\big\langle \Ptwo  w_h, v_h \big\rangle_{\Hilbertzero^*\times\Hilbertzero}
= \big\langle \Ptwo  w_h, \Pi_0 v_h \big\rangle_{\Hilbertzero^*\times\Hilbertzero}
=\big\langle \Pi_0^*\Ptwo  w_h,  v_h \big\rangle_{\Hilbertzero^*\times\Hilbertzero}
&=\big\langle \Pi_0^*\Ptwo  \Pi_0 w_h, v_h \big\rangle_{\Hilbertzero^*\times\Hilbertzero}\\
&=\big\langle \Ptwo  \Pi_0 w_h,  v_h \big\rangle_{\Hilbertzero^*\times\Hilbertzero}.
\label{eq:og2}
\end{align}
Since $J\Pi_0 w_h \in \Ker \Pone$, \eqref{eq:gammadvSTP2} immediately follows from \eqref{eq:og1} and \eqref{eq:og2}, and the proof is complete.
\epf

\begin{appendix}

\section{The Maxwell radial PML problem}\label{sec:PML}

This section recaps the definition of the Maxwell radial PML problem from \cite{CoMo:98a}, \cite[\S13.5.3.2, Page 378]{Mo:03}, \cite{BaWu:05} (using slightly different notation)
and shows that the coeffcients $\mu$ and $\epsilon$ in this case satisfy \eqref{eq:coefficients_sign} (see Lemma \ref{lem:Maxwell2} below).

\paragraph{The scattering problem.}

Let $\Omega_-\subset\mathbb{R}^3$ be such that its open complement $\Omega_+:= \mathbb{R}^3\setminus\overline{\Omega_-}$ is connected. Let $n$ be the outward-pointing unit normal vector to $\Omega_-$. 
Let $\epsilon_{\rm scat},\mu_{\rm scat}$ be real-valued symmetric positive definite matrix functions on $\Omega_+$ 
such that $\supp(\epsilon_{\rm scat}- I), \supp(\mu_{\rm scat}-I)\subset B_{R_{\rm scat}}$ for some $R_{\rm scat}>0$. The scattering problem is then:~given $f\in L^2_{\rm comp}(\Rea^3)$, find $E_{\rm scat}\in H_{\rm loc}(\curl,\Omega)$ with $E_{\rm scat}\times n=0$ on $\partial\Omega_-$ such that  
\beq\label{eq:Maxwell_scat}
\wn ^{-2}\curl (\mu_{\rm scat}^{-1} \curl E_{\rm scat}) - \epsilon_{\rm scat} E_{\rm scat} = f \quad\tin \Omega_+,
\eeq
and $E_{\rm scat}$ satisfies the Silver-M\"uller radiation condition (see, e.g., \cite[Equation 1.29]{Mo:03}).

\paragraph{PML definition.}

Let $\Rtr >\RPMLo>R_{\rm scat}$ and let $\Omega_{\tr}\subset \mathbb{R}^d$ be a bounded Lipschitz open set with $B_{\Rtr }\subset \Omega_{\tr} \subset  B_{C\Rtr }$ for some $C>0$ (i.e., $\Omega_{\tr}$ has characteristic length scale $\Rtr $).
Let $\Omega:=\Omega_{\tr}\cap \Omega_+$. 
For $0\leq \theta<\pi/2$, let the PML scaling function $f_\theta\in C^{1}([0,\infty);\mathbb{R})$
 be defined by $f_\theta(r):=f(r)\tan\theta$ for some $f$ satisfying
\begin{equation}
\label{e:fProp}
\begin{gathered}
\big\{f(r)=0\big\}=\big\{f'(r)=0\big\}=\big\{r\leq \RPMLo\big\},\quad f'(r)\geq 0,\quad f(r)\equiv r \text{ on }r\geq \RPMLt;
\end{gathered}
\end{equation}
i.e., the scaling ``turns on'' at $r=\RPMLo$, and is linear when $r\geq \RPMLt$.  Note that $\Rtr $ can be $<\RPMLt$, i.e., truncation can occur before linear scaling is reached. 
Given $f_\theta(r)$, let 
\beqs
\alpha(r) := 1 + \ri f_\theta'(r) \quad \tand\quad \beta(r) := 1 + \ri f_\theta(r)/r,
\eeqs
and let
\beq\label{eq:Ac}
\mu := 
\begin{cases}
\mu_{\rm scat}
\hspace{-1ex}
& \tin B_{\RPMLo},\\
HDH^T 
\hspace{-1ex}
&\tin (B_{\RPMLo})^c
\end{cases}
\tand
\epsilon:= 
\begin{cases}
\epsilon_{\rm scat} 
\hspace{-1ex}
& \tin B_{\RPMLo},\\
HDH^T 
\hspace{-1ex}
&\tin (B_{\RPMLo})^c
\end{cases}
\eeq
where, in spherical polar coordinates $(r,\varphi, \phi)$,
\beq\label{eq:DH3}
D =
\left(
\begin{array}{ccc}
\beta(r)^2\alpha(r)^{-1} &0 &0\\
0 & \alpha(r) &0 \\
0 & 0 &\alpha(r)
\end{array}
\right) 
\tand
H =
\left(
\begin{array}{ccc}
\sin \varphi \cos\phi & \cos \varphi \cos\phi & - \sin \phi \\
\sin \varphi \sin\phi & \cos \varphi \sin\phi & \cos \phi \\
\cos \varphi & - \sin \varphi & 0 
\end{array}
\right) 
\eeq
(observe that $\mu_{\rm scat} =\epsilon_{\rm scat}=I$  when $r=\RPMLo$ and thus $\mu$ and $\epsilon$ are continuous at $r=\RPMLo$).

The perfectly-matched-layer approximation to $E_{\rm scat}$ is then the solution of \eqref{eq:Maxwell_scat} in $\Omega$ with coefficients \eqref{eq:Ac}.

We highlight that, in other papers on PMLs, the scaled variable, which in our case is $r+\ri f_\theta(r)$, is often written as $r(1+ \ri \widetilde{\sigma}(r))$ with $\widetilde{\sigma}(r)= \sigma_0$ for $r$ sufficiently large; see, e.g., \cite[\S4]{HoScZs:03}, \cite[\S2]{BrPa:07}. Therefore, to convert from our notation, set $\widetilde{\sigma}(r)= f_\theta(r)/r$ and $\sigma_0= \tan\theta$.

\begin{lemma}\label{lem:Maxwell2}
Given $\epsilon_{\rm scat}, \mu_{\rm scat}$ as above and  
a scaling function $f(r)$ satisfying \eqref{e:fProp}, let $\epsilon,\mu$ be defined by \eqref{eq:Ac}. Given $\varepsilon>0$, the following is true.

(i) There exists $C>0$ such that, for all $\varepsilon \leq \theta \leq \pi/2-\varepsilon$, $x \in \Omega$, and $\xi,\zeta \in \mathbb{C}^d$,
\beqs
\max\big\{\big|\big(\mu^{-1}(x)\xi,\zeta\big)_2\big|\,,\,\big|\big(\epsilon(x)\xi,\zeta\big)_2\big|\big\}
\leq C\|\xi\|_2 \|\zeta\|_2.
\eeqs

(ii) If, additionally, $f(r)/r$ is nondecreasing, then there exists $c>0$ such that, for all $\varepsilon \leq \theta \leq \pi/2-\varepsilon$, $x \in \Omega$, and $\xi,\zeta \in \mathbb{C}^d$,
\beqs
\min\big\{\big(\Re (\mu^{-1}(x))\xi,\xi\big)_2\,,\,\big(\Re(\epsilon(x))\xi,\xi\big)_2\big\}
\geq c \|\xi\|_2^2 .
\eeqs
\end{lemma}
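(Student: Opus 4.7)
The plan is to reduce both claims to pointwise statements about the diagonal matrix $D$ in \eqref{eq:DH3}. Inside $B_{\RPMLo}$, where $\epsilon = \epsilon_{\rm scat}$ and $\mu = \mu_{\rm scat}$ are real symmetric positive definite and equal to the identity outside a compact set, both (i) and (ii) are immediate from the hypotheses on the scatterer coefficients, so I only need to treat the PML region $(B_{\RPMLo})^c$.

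The first step will be to note that $H$ from \eqref{eq:DH3} is a real orthogonal matrix --- its columns are the spherical-polar frame $e_r,e_\varphi,e_\phi$ --- so $HH^T = I$. Consequently, for any $\xi,\zeta \in \mathbb{C}^3$, the forms $(HDH^T\xi,\zeta)_2$ and $(HD^{-1}H^T\xi,\zeta)_2$ equal $(D\eta,\kappa)_2$ and $(D^{-1}\eta,\kappa)_2$ after the isometric change of variables $\eta = H^T\xi$, $\kappa = H^T\zeta$. This reduces both parts to uniform two-sided bounds on the diagonal entries of $D = \mathrm{diag}(\beta^2/\alpha,\alpha,\alpha)$ and of $D^{-1} = \mathrm{diag}(\alpha/\beta^2,1/\alpha,1/\alpha)$, where $\alpha = 1 + i\tan\theta\,f'(r)$ and $\beta = 1 + i\tan\theta\,f(r)/r$.

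For (i), I will observe that $|\alpha|,|\beta|\geq 1$ since $\Re\alpha = \Re\beta = 1$, and that the corresponding upper bounds follow from uniform bounds on $\tan\theta$ (for $\theta\in[\varepsilon,\pi/2-\varepsilon]$) together with bounds on $f'$ (which is zero on $[0,\RPMLo]$, continuous on the transition interval $[\RPMLo,\RPMLt]$, and equal to $1$ on $[\RPMLt,\infty)$) and on $f/r$ (zero on $[0,\RPMLo]$, bounded by $f(\RPMLt)/\RPMLo = \RPMLt/\RPMLo$ on $[\RPMLo,\RPMLt]$ by monotonicity of $f$, and equal to $1$ beyond). Combining these will yield uniform control of $|\alpha|,|1/\alpha|,|\beta^2/\alpha|,|\alpha/\beta^2|$.

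For (ii), the main computation, with $s = \tan\theta$, is
\[
\Re(\beta^2/\alpha) = \frac{1 + s^2(f/r)(2f' - f/r)}{1 + s^2(f')^2}, \qquad \Re(\alpha/\beta^2) = \frac{1 + s^2(f/r)(2f' - f/r)}{|\beta|^4}.
\]
The additional hypothesis $(f/r)' \geq 0$ is equivalent (via the quotient rule) to $f'\geq f/r$ pointwise, which forces the common numerator above to be $\geq 1$ and hence both real parts to be bounded below by positive constants. Combined with $\Re\alpha = 1$ and $\Re(1/\alpha) = (1 + s^2(f')^2)^{-1}$, this will show that $\Re D$ and $\Re D^{-1}$ are diagonal with entries uniformly bounded below by a positive constant on $\theta\in[\varepsilon,\pi/2-\varepsilon]$. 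The only genuinely nontrivial step is identifying the identity above and seeing that the monotonicity of $f/r$ is precisely what guarantees that the numerator is positive; everything else is routine bookkeeping.
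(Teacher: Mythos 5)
Your proposal is correct, and it fills in precisely the computation that the paper defers to (the Helmholtz analogue in the cited reference): diagonalize via the orthogonal frame matrix $H$, reduce to the diagonal entries of $D$ and $D^{-1}$, compute $\Re(\beta^2/\alpha)$ and $\Re(\alpha/\beta^2)$, and observe that $(f/r)'\geq 0 \Leftrightarrow f'\geq f/r$ makes the common numerator $1 + s^2(f/r)(2f'-f/r)\geq 1$. This is the same route the paper's sketch indicates, just written out in full.
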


\bpf[Sketch proof]
Part (i) follows in a straightforward way from the definitions of $\mu$ and $\epsilon$. The proof of Part (ii) is very similar to the proof of the analogous Helmholtz result in \cite[Lemma 2.2]{GLSW1}.
\epf

We highlight that the assumption in Part (ii) of Lemma \ref{lem:Maxwell2} that $f(r)/r$ is nondecreasing
 is standard in the literature; e.g., in the alternative notation described above it is that 
$\widetilde{\sigma}$ is non-decreasing -- see \cite[\S2]{BrPa:07}.

\section{Proof of Theorem \ref{thm:interpolation} (interpolation results in $\Hilbert_h$)}\label{app:interpolation}

Recall that $D \LF_K$ is the Jacobian matrix of $\LF_K$.
By the first bound in \eqref{eq_mapK} with $|\alpha|=1$ and the fact that $d=3$, there exists $C>0$ such that, for all $K \in \mathcal T_h$,
\begin{equation}
\label{eq_element_det}
\frac{1}{C} \hK^3 \leq \operatorname{det} (D \LF_K) \leq C\hK^3
\quad\text{ in }
\widehat K.
\end{equation}
For $v \in L^2(K)$, we introduce the curl- and divergence-conforming Piola transformations: 
\begin{align}\label{eq:Piola_curl}
\LF_K^{\rm c}(v)
&:=
(D\LF_K)^T (v \circ \LF_K),
\\
\LF_K^{\rm d}(v)
&:=
\operatorname{det} (D\LF_K)
(D\LF_K)^{-1} (v \circ \LF_K);
\label{eq:Piola_div}
\end{align}
see, e.g., \cite[\S3.9]{Mo:03}, \cite[\S9.2.1]{ErGu:21}. 
Recall that 
\beq\label{eq:peanut1}
\curl (\LF^{\rm c}_K (v)\big)=  \LF^{\rm d}_K(\curl v)
\eeq
for all $v\in C^1(K)$ by, e.g., \cite[Corollary 9.9]{ErGu:21}.

In analogue with the definition of the space $Z^j$ \eqref{eq:Zj}, let
\beqs
Z^j(\cT_h) := \Big\{ u \in H_0(\curl, \Omega): u|_K \in H^{j-1}(K) \tand (\curl u)|_K \in H^{j-1}(K)\Big\}.
\eeqs

We denote by $\widehat I^{\rm c}, \widehat I^{\rm d}$ the canonical N\'ed\'elec
and Raviart-Thomas interpolants of degree $p$ on the reference element $\widehat K$ 
(see, e.g., \cite[\S5.4--5.5]{Mo:03}, \cite[Chapter 16]{ErGu:21}).
We then consider the interpolation operators 
$I^{\rm c}_h : 
\newZ^2(\cT_h) 
\to V_h$ 
and $I^{\rm d}_h: H^1_{\rm pw}(\mathcal T_h) \to V_h$ 
by setting
\beq\label{eq:I_h}
I^{\rm c}_h|_K := (\LF_K^{\rm c})^{-1} \circ \widehat I^{\rm c} \circ \LF_K^{\rm c}
\eeq
and 
\beqs
I^{\rm d}_h|_K := (\LF_K^{\rm d})^{-1} \circ \widehat I^{\rm d} \circ \LF_K^{\rm d}
\eeqs
(see, e.g., \cite[Proposition 9.3]{ErGu:21}). 
By standard commuting properties, 
\beq\label{eq:commute_appendix}
(\curl \circ I^{\rm c}_h)|_K =( I^{\rm d}_h \circ \curl  )|_K; 
\eeq
see, e.g., \cite[Lemma 16.8]{ErGu:21}.

We prove below that Theorem \ref{thm:interpolation} holds with $\CJ_h = I^{\rm c}_h$. 
The following two lemmas are key ingredients in this proof. 

\begin{lemma}[Norm bounds on Piola transformations]\label{lem:norms}
With $\LF_K^{\rm c}(v)$ and $\LF_K^{\rm d}(v)$ defined by \eqref{eq:Piola_curl} and \eqref{eq:Piola_div}, respectively, there exists $C>0$ such that, for $\newellfour\in \{1,\ldots,p\}$, for all $K\in \cT_h$, and 
for all $v \in H^\newellfour(K)$, 
\begin{equation}
\label{eq_piola_curl}
\hK^{3/2} |\LF_K^{\rm c}(v)|_{H^\newellfour(\widehat K)}
\leq
C L \left (\frac{\hK}{L}\right )^{\newellfour+1}
\sum_{j=1}^{\newellfour} L^j |v|_{H^j(K)}
\end{equation}
and
\begin{equation}
\label{eq_piola_div}
\hK^{3/2}\big|\LF_K^{\rm d} (v)\big|_{H^\newellfour(\widehat K)}
\leq
CL^2
\left (
\frac{\hK}{L}
\right )^{\newellfour+2}
\sum_{j=0}^{\newellfour}
L^j |v|_{H^j(K)}.
\end{equation}
\end{lemma}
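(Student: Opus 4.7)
The plan is to establish both bounds by direct calculation based on repeated differentiation of the Piola transforms, using Assumption \ref{assumption_curved_fem} to control each derivative of $\LF_K$ and the Jacobian bound \eqref{eq_element_det} to execute the change of variables between $\hat K$ and $K$. Throughout, the guiding principle is that each derivative of $\LF_K$ costs a factor of size $L(\hK/L) = \hK$ (for the first derivative) and carries an additional $\hK/L$ for each further derivative; the factor $\hK^{3/2}$ in front of the seminorms precisely absorbs the Jacobian when pulling an $L^2(K)$ norm back to $L^2(\hat K)$ via $\|v\circ \LF_K\|_{L^2(\hat K)} \leq C\hK^{-3/2}\|v\|_{L^2(K)}$.

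For the curl-conforming bound \eqref{eq_piola_curl}, I would start by differentiating $\LF_K^{\rm c}(v) = (D\LF_K)^T(v\circ \LF_K)$ a total of $\newellfour$ times using the Leibniz rule, splitting into a factor involving $\partial^{\hat\beta}(D\LF_K)^T$ and a factor involving $\partial^{\hat\gamma}(v\circ\LF_K)$ with $|\hat\beta|+|\hat\gamma|=\newellfour$. The first factor is a $(|\hat\beta|+1)$-st derivative of $\LF_K$ and so is bounded in $L^\infty$ by $CL(\hK/L)^{|\hat\beta|+1}$ thanks to \eqref{eq_mapK} (valid as long as $\newellfour+1 \leq p+1$, which holds). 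The second factor is handled by the Faà di Bruno formula, which expresses $\partial^{\hat\gamma}(v\circ\LF_K)$ as a sum over partitions of $\hat\gamma$ into pieces $\hat\gamma_1,\ldots,\hat\gamma_s$ with $|\hat\gamma_i|\geq 1$; each term is bounded pointwise by $|(\partial^s v)(\LF_K)|\prod_i L(\hK/L)^{|\hat\gamma_i|}$, so that overall $|\partial^{\hat\gamma}(v\circ \LF_K)|\leq C\sum_{s=1}^{|\hat\gamma|} L^s(\hK/L)^{|\hat\gamma|}|(\nabla^s v)(\LF_K)|$. Multiplying the two factors, integrating over $\hat K$, and then using the change of variables $x=\LF_K(\hat x)$ with the Jacobian bound $\det(D\LF_K)^{-1}\leq C\hK^{-3}$ converts each $\|(\nabla^s v)(\LF_K)\|_{L^2(\hat K)}$ into $\hK^{-3/2}|v|_{H^s(K)}$ (up to a constant). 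After collecting the powers of $L$ and $\hK$ — factoring $L(\hK/L)^{\newellfour+1}$ in front — the $s$-th contribution becomes $L^s|v|_{H^s(K)}$, which gives the stated bound (with the sum starting from $s=1$ because at least one derivative lands on $v\circ \LF_K$ whenever $|\hat\gamma|\geq 1$; the Leibniz term $|\hat\gamma|=0$ then requires a separate argument absorbing $\|v\|_{L^2(K)}$, relying on Deny--Lions style subtraction of a constant that is preserved by the subsequent interpolation step in Theorem \ref{thm:interpolation}).

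The divergence-conforming bound \eqref{eq_piola_div} follows the same template, with one extra complication: we must differentiate $\det(D\LF_K)$ and $(D\LF_K)^{-1}$. For $\det(D\LF_K)$, distributing derivatives through the multilinear determinant expansion gives $|\partial^{\hat\alpha}\det(D\LF_K)|\leq C L^{3-|\hat\alpha|}(\hK/L)^{|\hat\alpha|+3}$ by \eqref{eq_mapK} (using that each of the three factors in each monomial of $\det$ is a first derivative of $\LF_K$). For $(D\LF_K)^{-1}$, I would write its entries as cofactors divided by $\det(D\LF_K)$ and differentiate using the quotient and Leibniz rules, using that $\det(D\LF_K)\geq c\hK^3$ from \eqref{eq_element_det}; equivalently, one differentiates the identity $(D\LF_K)(D\LF_K)^{-1}=I$ inductively on $|\hat\alpha|$. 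Either way, $|\partial^{\hat\alpha}(D\LF_K)^{-1}|\leq CL^{-1-|\hat\alpha|}\hK^{-1+|\hat\alpha|}$. Multiplying the three factors $\det(D\LF_K)$, $(D\LF_K)^{-1}$, and the Faà di Bruno expansion of $\partial^{\hat\gamma}(v\circ \LF_K)$, and carrying out the same change-of-variables step, yields the prefactor $L^2(\hK/L)^{\newellfour+2}$ with the sum starting at $s=0$ (the $s=0$ term is now legitimate because $|v|_{H^0(K)}=\|v\|_{L^2(K)}$ appears explicitly in the stated bound).

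The main obstacle is the bookkeeping: keeping track of all combinatorial terms arising from the Leibniz and Faà di Bruno formulas, verifying that no derivative of $\LF_K$ of order exceeding $p+1$ appears (so that \eqref{eq_mapK} always applies — this is where the restriction $\newellfour\leq p$ enters), and checking that the exponents of $L$ and $\hK$ combine exactly as claimed. Everything else is straightforward: the inputs are the pointwise bound \eqref{eq_mapK}, the two-sided Jacobian bound \eqref{eq_element_det}, and the change-of-variables identity $\int_{\hat K}|F|^2\, d\hat x = \int_K |F\circ \LF_K^{-1}|^2|\det D\LF_K^{-1}|\,dx$, together with the elementary observations $|\det(D\LF_K)^{-1}|\leq C\hK^{-3}$ and $\|(D\LF_K)^{-1}\|_\infty\leq C\hK^{-1}$.
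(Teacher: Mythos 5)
Your overall strategy --- Leibniz rule on the Piola identity, bounding derivatives of $\LF_K$ via \eqref{eq_mapK}, a chain-rule argument (Fa\`a di Bruno in your case) for $\partial^{\hat\gamma}(v\circ\LF_K)$, and the change of variables that supplies the factors $\hK^{\pm 3/2}$ --- is exactly the paper's strategy, which packages the composition step as the separate inductive Lemma on composed functions (establishing \eqref{tmp_expression_partial} directly rather than quoting Fa\`a di Bruno) and the coefficient bound as Lemma~\ref{lem:cofactor}. The one place the paper's route is cleaner is the treatment of $\det(D\LF_K)(D\LF_K)^{-1}$: you differentiate $\det(D\LF_K)$ and $(D\LF_K)^{-1}$ separately (needing a quotient rule or the inverse identity, and bounds on $\det(D\LF_K)^{-1}$), whereas the paper observes that $\det(D\LF_K)(D\LF_K)^{-1}$ is the transposed cofactor matrix of $D\LF_K$, whose entries are sums of products of pairs of first derivatives of $\LF_K$, so no derivative of $(D\LF_K)^{-1}$ ever needs to be estimated. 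Your intermediate exponents also have slips: the correct bounds are $|\partial^{\hat\alpha}\det(D\LF_K)|\leq C L^3(\hK/L)^{|\hat\alpha|+3}$ and $|\partial^{\hat\alpha}(D\LF_K)^{-1}|\leq C\hK^{-1}(\hK/L)^{|\hat\alpha|}$, not $CL^{3-|\hat\alpha|}(\hK/L)^{|\hat\alpha|+3}$ and $CL^{-1-|\hat\alpha|}\hK^{|\hat\alpha|-1}$ --- though your final prefactor $L^2(\hK/L)^{\newellfour+2}$ is what actually comes out.

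The issue you flagged parenthetically deserves to be front and centre: the $\hat\gamma=0$ Leibniz term $\partial^\alpha(D\LF_K)^T(v\circ\LF_K)$ contributes $CL(\hK/L)^{\newellfour+1}\|v\|_{L^2(K)}$, which is of the \emph{same order} as the $j\geq1$ terms and absent from the right-hand side of \eqref{eq_piola_curl}. Taking $v$ constant and $\LF_K$ genuinely curved (so $\partial^\alpha(D\LF_K)\neq 0$) makes the left-hand side positive while the stated right-hand side vanishes. This gap is shared by the paper's proof as written --- indeed the paper's own bound \eqref{eq_piola_div} \emph{does} start at $j=0$, precisely to accommodate the analogous $\hat\gamma=0$ term, so the $j=1$ start in \eqref{eq_piola_curl} looks like an oversight rather than a feature. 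Your proposed Deny--Lions fix, however, does not close the gap at the level of Lemma~\ref{lem:norms}: subtracting a constant $c$ from $v$ does not remove the offending term, because $\LF^{\rm c}_K(c)=(D\LF_K)^T c$ is \emph{not} constant on $\widehat K$ when $\LF_K$ is non-affine, so $|\LF^{\rm c}_K(c)|_{H^\newellfour(\widehat K)}\neq 0$; and deferring the argument to Theorem~\ref{thm:interpolation} runs into the problem that $I^{\rm c}_h$ need not reproduce constants on a curved element unless $\LF_K$ is itself polynomial of degree at most $p$. The clean repair is simply to state \eqref{eq_piola_curl} with the sum starting at $j=0$, as in \eqref{eq_piola_div}: this is what the Leibniz computation yields, and the extra $\|v\|_{L^2(K)}$ term is harmless downstream, since it is absorbed into the $\newZ^{\newellfour+1}_{\wn}$ norm when the lemma is applied.
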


\begin{lemma}[Derivative of co-factor matrix]\label{lem:cofactor}
There exists $C>0$ such that, for all $K \in \mathcal T_h$,
\begin{equation}
\label{eq_piola_coefficient}
\big\|\partial^{\alpha}\big(\operatorname{det}(D \LF_K)(D\LF_K)^{-1}\big)\big\|_{L^\infty(\widehat K)}
\leq
C L^2 \left ( \frac{h}{L} \right )^{|\alpha|+2}
\end{equation}
for $1\leq |\alpha|\leq p$.
\end{lemma}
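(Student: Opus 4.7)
The plan is to write each entry of the cofactor matrix $\operatorname{adj}(D\LF_K) := \det(D\LF_K)(D\LF_K)^{-1}$ as a polynomial in the entries of $D\LF_K$, then apply the Leibniz rule together with Assumption \ref{assumption_curved_fem}.

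First, I would recall that in dimension $d=3$ every entry of $\operatorname{adj}(D\LF_K)$ is, up to sign, a $2\times 2$ minor of $D\LF_K$, and hence a finite sum of products of exactly two entries of $D\LF_K$. Each entry of $D\LF_K$ has the form $\partial_i\LF_K^j$, so for any multi-index $\beta$ with $0\leq|\beta|\leq p$, applying \eqref{eq_mapK} to the multi-index of order $|\beta|+1\in[1,p+1]$ gives
$$
\|\partial^\beta(\partial_i\LF_K^j)\|_{L^\infty(\widehat K)}
= \|\partial^{\beta+e_i}\LF_K^j\|_{L^\infty(\widehat K)}
\leq C L\Big(\frac{h_K}{L}\Big)^{|\beta|+1}.
$$

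Next, for any two entries $a,b$ of $D\LF_K$ and any multi-index $\alpha$ with $1\leq|\alpha|\leq p$, the Leibniz rule gives
$$
\|\partial^\alpha(ab)\|_{L^\infty(\widehat K)}
\leq \sum_{\beta\leq\alpha}\binom{\alpha}{\beta}\|\partial^\beta a\|_{L^\infty(\widehat K)}\|\partial^{\alpha-\beta}b\|_{L^\infty(\widehat K)}
\leq C L^2\Big(\frac{h_K}{L}\Big)^{|\alpha|+2},
$$
where the constraints $|\beta|\leq p$ and $|\alpha-\beta|\leq p$ required by the estimate above are automatically satisfied because $|\beta|,|\alpha-\beta|\leq|\alpha|\leq p$. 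Summing this estimate over the finitely many $2\times 2$ minors composing an entry of $\operatorname{adj}(D\LF_K)$, and using $h_K\leq h$ together with the positivity of the exponent $|\alpha|+2$, yields \eqref{eq_piola_coefficient}.

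I do not expect any substantive obstacle here — the argument is a routine application of the Leibniz rule to a cofactor expansion. The only point worth flagging is the reason for the restriction $|\alpha|\leq p$: each entry of $D\LF_K$ already costs one order of regularity from Assumption \ref{assumption_curved_fem}, so only $p$ further derivatives are then available uniformly in $K$.
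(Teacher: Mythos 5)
Your proof is correct and follows essentially the same route as the paper's: identify $\det(D\LF_K)(D\LF_K)^{-1}$ as the cofactor matrix, note that in dimension three each entry is a sum of products of two entries of $D\LF_K$, apply the Leibniz rule, and invoke \eqref{eq_mapK} for $|\alpha|\leq p+1$. The extra care you take in checking that $|\beta|+1$ and $|\alpha-\beta|+1$ stay within the range $[1,p+1]$ where \eqref{eq_mapK} applies is a useful explicitness that the paper leaves implicit.
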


\begin{proof}[Proof of Theorem \ref{thm:interpolation} using Lemmas \ref{lem:norms} and \ref{lem:cofactor}]
Let  $\widehat v := \LF^{\rm c}_K(v)$, so that, by \eqref{eq:I_h},
\begin{equation*}
\LF^{\rm c}_K(v-I^{\rm c}_h v) = \widehat u-\widehat I^{\rm c} \widehat u.
\end{equation*}
We now claim that 
\begin{equation*}
\|v-I^{\rm c}_h v\|_{L^2(K)}
\leq
C
\hK^{-1}\hK^{3/2} \|\widehat v-\widehat I^{\rm c} \widehat v\|_{L^2(\widehat K)};
\end{equation*}
indeed, the $\hK^{3/2}$ comes from the Jacobian in the change of variable with~\eqref{eq_element_det},
and the $\hK^{-1}$ comes from the factor $(D\LF_K^{-1})$ 
via \eqref{eq_mapK}.
On the reference element, the proof of \cite[Theorem 3.14]{Hi:02} implies that there exists $C>0$ such that, for $\newellfour\in \{1,\ldots,p\}$, 
\begin{equation*}
\|\widehat v-\widehat I^{\rm c} \widehat v\|_{L^2(\widehat K)}
\leq
C
\left (
|\widehat v|_{H^\newellfour(\widehat K)} + |\curl \widehat v|_{H^\newellfour(\widehat K)}
\right ).
\end{equation*}
so that
\begin{equation}\label{eq:baby1}
\|v-I^{\rm c}_h v\|_{L^2(K)}
\leq
C
\hK^{-1} \hK^{3/2}
\left (
|\widehat v|_{H^\newellfour(\widehat K)} + |\curl \widehat v|_{H^\newellfour(\widehat K)}
\right ).
\end{equation}
By \eqref{eq_piola_curl},
\begin{equation}\label{eq:baby2}
\hK^{-1} \hK^{3/2}|\widehat v|_{H^\newellfour(\widehat K)}
\leq
C \left (\frac{\hK}{L}\right )^{\newellfour} \sum_{j=1}^\newellfour L^j|v|_{H^j(K)}.
\end{equation}
We now let $w := \curl v$ and $\widehat w := \curl \widehat v$, so that $\widehat w= \LF^{\rm d}_K(w)$ by \eqref{eq:peanut1}.
By \eqref{eq_piola_div}, \eqref{eq_element_det}, and \eqref{eq_piola_coefficient} with $|\alpha|=1$, 
\begin{align}\nonumber
&\hK^{-1} \hK^{3/2}|\curl \widehat v|_{H^\newellfour(\widehat K)}
=
\hK^{-1} \hK^{3/2}|\widehat w|_{H^\newellfour(\widehat K)}
\\
&\qquad\leq
C L^2 \hK^{-1} \left (\frac{\hK}{L}\right )^{\newellfour+2} \sum_{j=1}^\newellfour L^j|w|_{H^j(K)}
=
C \hK \left (\frac{\hK}{L}\right )^\newellfour \sum_{j=1}^\newellfour L^j|\curl v|_{H^j(K)}.
\label{eq:baby3}
\end{align}
The bound \eqref{eq:interpolation1} then follows from the combination of \eqref{eq:baby1}, \eqref{eq:baby2}, and \eqref{eq:baby3}.

To prove \eqref{eq:interpolation2}, first observe that, by \eqref{eq:commute_appendix},
\begin{equation}\label{eq:butter1}
\|\curl(v-I_h^{\rm c} v)\|_{L^2(K)}
=
\|w-I_h^{\rm d} w\|_{L^2(K)}.
\end{equation}
By the definition of $\LF^{\rm d}$ \eqref{eq:Piola_div}, the lower bound in \eqref{eq_element_det}, and the first bound in \eqref{eq_mapK} with $|\alpha|=1$, 
\beq\label{eq:butter2}
\|w-I_h^{\rm d} w\|_{L^2(K)}
\leq
\hK^{-2} \hK^{3/2} \|\widehat w -\widehat I^{\rm d} \widehat w\|_{L^2(\widehat K)}.
\eeq
Since $I^{\rm d}$ is continuous over $H^1(\widehat K)$ and preserves polynomials
of degree $p-1$, 
the Bramble-Hilbert lemma (see, e.g., \cite[Theorem 28.1]{Ci:91}, \cite[\S11.3]{ErGu:21}) implies that
there exists $C>0$ such that, for $\newellfour\in \{1,\ldots,p\}$, 
\begin{equation}\label{eq:butter3}
\|\widehat w -\widehat I^{\rm d} \widehat w\|_{L^2(\widehat K)}
\leq
C |\widehat w|_{H^\newellfour(\widehat K)};
\end{equation}
the result \eqref{eq:interpolation2} then follows from the combination of \eqref{eq:butter1}, \eqref{eq:butter2}, \eqref{eq:butter3}, and \eqref{eq_piola_div}.
\end{proof}

It therefore remains to prove Lemmas \ref{lem:norms} and \ref{lem:cofactor}.

\begin{proof}[Proof of Lemma \ref{lem:cofactor}]
We first observe that
\begin{equation*}
\operatorname{det}(D \LF_K)(D\LF_K)^{-1}
\end{equation*}
is just the cofactor matrix of $D\LF_K$. Since this is a $3 \times 3$ matrix,
its entries are sum of products of pairs of elements of $D\LF_K$. As a result, we
just need to estimate terms of the form $\partial_m\LF_K^r \partial_n\LF_K^q$, which
easily follows by the product rule:
\begin{equation*}
\partial^\alpha(\partial_m\LF_K^r \partial_n\LF_K^q)
=
\sum_{\beta \leq \alpha}
\bigg(
\begin{array}{c}
\alpha \\ \beta
\end{array}
\bigg)
\partial^\beta\partial_m\LF_K^r \partial^{\alpha-\beta} \partial_n\LF_K^q,
\end{equation*}
leading to
\begin{equation*}
\|\partial^{\alpha}(\operatorname{det} D \LF_K (D \LF_K)^{-1})\|_{L^\infty(\widehat K)}
\leq
C\sum_{\beta\leq\alpha}
|\partial^{\beta} (D \LF_K)||\partial^{\alpha-\beta} (D \LF_K)|;
\end{equation*}
the result then follows from the first bound in \eqref{eq_mapK}. 
\end{proof}

We now need to describe how partial derivatives of functions are modified under
the element mappings.

\begin{lemma}[Sobolev norms of composed functions]
Given $m\geq 1$ there exists $C>0$ such that if 
$K \in \mathcal T_h$ and $u \in H^\newellfour(K)$ then
\begin{equation}
\label{eq_partial_composed}
h^{3/2} |u \circ \LF_K|_{H^\newellfour(\widehat K)}
\leq
C \left (\frac{h}{L}\right )^\newellfour
\sum_{j=1}^\newellfour L^j |u|_{H^j(K)}.
\end{equation}
\end{lemma}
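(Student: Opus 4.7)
The proof is a routine scaling argument based on the chain rule for higher-order derivatives (Fa\`a di Bruno), together with the hypotheses on $\LF_K$ from~\eqref{eq_mapK} and~\eqref{eq_element_det}. The plan is to estimate $\partial^\alpha(u\circ\LF_K)$ pointwise for $|\alpha|=\newellfour$ and then change variables back to $K$.

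First, I would apply Fa\`a di Bruno's formula componentwise: for any multi-index $\alpha$ with $|\alpha|=\newellfour$,
\begin{equation*}
\partial^\alpha (u\circ \LF_K)(\widehat x)
=\sum_{j=1}^{\newellfour}\,\sum_{(\beta_1,\ldots,\beta_j)}
c_{\alpha,\beta_1,\ldots,\beta_j}\;
\bigl((\partial^j u)\circ \LF_K\bigr)(\widehat x)\cdot
\prod_{i=1}^{j}\partial^{\beta_i}\LF_K(\widehat x),
\end{equation*}
where the inner sum is over decompositions $\beta_1+\cdots+\beta_j=\alpha$ with each $|\beta_i|\geq 1$, and $c_{\alpha,\beta_1,\ldots,\beta_j}$ are universal combinatorial constants depending only on $\newellfour\leq p$. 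Using the assumed bound $\|\partial^{\beta_i}\LF_K\|_{L^\infty(\widehat K)}\leq CL(h_K/L)^{|\beta_i|}$ from~\eqref{eq_mapK} and the constraint $\sum_i|\beta_i|=\newellfour$, each product of $j$ factors of derivatives of $\LF_K$ is bounded by $CL^j(h_K/L)^{\newellfour}$. Hence
\begin{equation*}
\bigl|\partial^\alpha (u\circ \LF_K)(\widehat x)\bigr|
\leq C\left(\frac{h_K}{L}\right)^{\newellfour}
\sum_{j=1}^{\newellfour} L^j\,\bigl|(\partial^j u)\circ \LF_K(\widehat x)\bigr|.
\end{equation*}

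Next I would square, integrate over $\widehat K$, and change variables $x=\LF_K(\widehat x)$. By the lower bound $\det(D\LF_K)\geq C^{-1}h_K^3$ in~\eqref{eq_element_det},
\begin{equation*}
\int_{\widehat K}\bigl|(\partial^j u)\circ \LF_K(\widehat x)\bigr|^2\,d\widehat x
=\int_K \bigl|\partial^j u(x)\bigr|^2\,\bigl|\det (D\LF_K)^{-1}\bigr|\,dx
\leq C h_K^{-3}\,|u|_{H^j(K)}^2.
\end{equation*}
Summing over $|\alpha|=\newellfour$ (which contributes only a constant depending on $\newellfour$ and the ambient dimension), taking square roots, and multiplying through by $h_K^{3/2}$ absorbs the $h_K^{-3}$ and yields
\begin{equation*}
h_K^{3/2}\,|u\circ \LF_K|_{H^{\newellfour}(\widehat K)}
\leq C\left(\frac{h_K}{L}\right)^{\newellfour}\sum_{j=1}^{\newellfour}L^j\,|u|_{H^j(K)},
\end{equation*}
which is~\eqref{eq_partial_composed} (after noting $h_K\leq h$).

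No step is genuinely hard; the only mild bookkeeping subtlety is verifying that the Fa\`a di Bruno sum really produces exactly the weighting $L^j(h_K/L)^{\newellfour}$ for a $j$-th order derivative of $u$ and that the combinatorial constants depend only on $\newellfour\leq p$ (so they may be absorbed into $C$). Once that is in place, the change of variables and the bound on $\det(D\LF_K)$ from~\eqref{eq_element_det} close the estimate immediately.
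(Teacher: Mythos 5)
Your proof is correct and follows essentially the same route as the paper: the paper proves the intermediate formula \eqref{tmp_expression_partial} by induction, which is exactly the (vector-valued) Fa\`a di Bruno identity you invoke, and then both arguments apply the bound $\|\partial^{\beta_i}\LF_K\|_{L^\infty}\leq CL(h_K/L)^{|\beta_i|}$ from \eqref{eq_mapK} and the determinant lower bound from \eqref{eq_element_det} in the change of variables. The only difference is that you cite the combinatorial identity by name while the paper re-derives it.
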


\begin{proof}
In this proof we denote the $j$th component of $\LF_K$ ($j=1,2,3$) by $\LF_K^j$. 
We claim that, for any multi-index $\alpha \geq 0$, 
\begin{equation}
\label{tmp_expression_partial}
\partial^\alpha(u\circ\LF_K)
=
\sum_{\beta \leq \alpha}
\Psi_\beta (\partial^\beta u) \circ \LF_K,
\end{equation}
where each $\Psi_\beta$ is of the form
\begin{equation}\label{eq:Psi_beta}
\Psi_\beta = \sum_{\ell=1}^{N_\beta}
\prod_{j=1}^{|\beta|} \partial^{\gamma_j^\ell} \LF_K^{\mu_j^{\ell}},
\end{equation}
for some integer $N_\beta$, multi-indices $\gamma_j^\ell$ with $\sum_{j=1}^{|\beta|}|\gamma^\ell_j| = |\alpha|$,
and $\mu_j^{\ell} \in \{1,2,3\}$.

Once~\eqref{tmp_expression_partial} is established, the result follows, since 
\begin{equation*}
|\Psi_{\beta}|
\leq
C L^{|\beta|} \left (\frac{h}{L}\right )^{|\alpha|}
\end{equation*}
by (i) the first bound in~\eqref{eq_mapK} and (ii) using \eqref{eq_element_det} to 
take into account the change of variable in the $L^2(K)$ integrals. 

We prove \eqref{tmp_expression_partial} by induction. When $|\alpha|=1$, 
\begin{equation*}
\partial_m (u \circ \LF_K) = \sum_{r=1}^3(\partial_m \LF_K^r) \big((\partial_r u) \circ \LF_K\big)
\end{equation*}
for all $m \in \{1,2,3\}$, and so \eqref{tmp_expression_partial} holds. 
Suppose that \eqref{tmp_expression_partial} holds for all $\alpha$ with $|\alpha|= M\geq 1$. 
By  \eqref{tmp_expression_partial}, \eqref{eq:Psi_beta}, and the chain and product rules,
\begin{align}\nonumber
&\partial_m \Big(\partial^\alpha(u\circ\LF_K)\Big)\\
&=
\sum_{\beta \leq \alpha}
\bigg[
\sum_{\ell=1}^{N_\beta}
\partial_m
\bigg(
\prod_{j=1}^{|\beta|} \partial^{\gamma_j^\ell} \LF_K^{\mu_j^{\ell}}
\bigg)
(\partial^{\beta} u) \circ \LF_K+
\prod_{j=1}^{|\beta|} \partial^{\gamma_j^\ell} \LF_K^{\mu_j^{\ell}}
\sum_{r=1}^3(\partial_m\LF_K^r)
\big(\partial_r (\partial^{\beta} u) \circ \LF_K\big)
\bigg]\nonumber
\\
&=\sum_{\beta' \leq \alpha+ e_m}
\Psi_{\beta'} (\partial^{\beta'} u) \circ \LF_K,
\nonumber
\end{align}
with $\Psi_{\beta'}$ of the form \eqref{eq:Psi_beta} except now $\sum_{j=1}^{|\beta|}|\gamma^\ell_j| = |\alpha|+1$.
That is, \eqref{tmp_expression_partial} holds for all  $\alpha$ with $|\alpha|\leq M+1$ and the proof is complete.
\end{proof}

\begin{proof}[Proof of Lemma \ref{lem:norms}]
The bound \eqref{eq_piola_curl} follows from 
the definition of $\LF^{\rm c}_K$ \eqref{eq:Piola_curl}, 
the product rule, the first bound in \eqref{eq_mapK}, and \eqref{eq_partial_composed}.
The bound \eqref{eq_piola_div} follows in a similar way from 
the definition of $\LF^{\rm d}_K$ \eqref{eq:Piola_div}, the product rule, \eqref{eq_piola_coefficient}, and  \eqref{eq_partial_composed}.
\end{proof}

\end{appendix}

\section*{Acknowledgements}

JG was supported by EPSRC grants EP/V001760/1 and EP/V051636/1.

\footnotesize{
\bibliographystyle{plain}
\bibliography{biblio_combined_sncwadditions}
}

\end{document}